\pgfplotsset{compat=newest}    
\renewcommand{\emptyset}{\varnothing}
\theoremstyle{plain}\newtheorem{thm}{Theorem}[section]
\theoremstyle{definition}\newtheorem{defi}[thm]{Definition}
\theoremstyle{plain}
\theoremstyle{plain}\newtheorem{cor}[thm]{Corollary}
\theoremstyle{plain}\newtheorem{lem}[thm]{Lemma}
\theoremstyle{plain}
\theoremstyle{plain}\newtheorem{pro}[thm]{Proposition}
\theoremstyle{plain}
\theoremstyle{plain}
\theoremstyle{plain}
\theoremstyle{remark}
\theoremstyle{remark}
\theoremstyle{remark}\newtheorem{nota}[thm]{Notation}
\theoremstyle{remark}
\theoremstyle{plain}
\theoremstyle{plain}\newtheorem{ex}[thm]{Example}
\theoremstyle{plain}
\theoremstyle{remark}\newtheorem{rmk}[thm]{Remark}
\theoremstyle{remark}
\theoremstyle{remark}
\numberwithin{equation}{section}
\numberwithin{thm}{section}
\definecolor{c1}{HTML}{88d5d2} 
\definecolor{c2}{HTML}{9c9d47} 
\definecolor{c3}{HTML}{fec842} 
\definecolor{c4}{HTML}{e97a2e} 
\definecolor{c5}{HTML}{834e71} 
\definecolor{myred}{HTML}{a43113}
\definecolor{myblue}{HTML}{1f4887}
\definecolor{mygreen}{HTML}{49804c}
\definecolor{mygraygreen}{HTML}{6b6c5c}
\definecolor{myyellow}{HTML}{d5934a}
\definecolor{mypurple}{HTML}{492f7a}
\newcommand{\myred}[1]{\textcolor{myred}{#1}}
\newcommand{\myblue}[1]{\textcolor{myblue}{#1}}
\newcommand{\mygreen}[1]{\textcolor{mygreen}{#1}}
\newcommand{\myyellow}[1]{\textcolor{myyellow}{#1}}
\newcommand{\mypurple}[1]{\textcolor{mypurple}{#1}}
\begin{document}

\begin{center}
{\Large \textbf{Uniqueness of Maximal Curve Systems on Punctured Projective Planes}}

{\Large \textbf{ }} 

{\large \textsc{Xiao} CHEN$^{*}$ and \textsc{Wujie} SHEN$^{**}$} 

{\small Tsinghua University}

{\footnotesize \emph{e-mail:} x-chen20@mails.tsinghua.edu.cn$^{*}$}

{\footnotesize \emph{e-mail:} shenwj22@mails.tsinghua.edu.cn$^{**}$}

\end{center}

\begin{abstract}

    We show that the maximal $1$-system of loops in a punctured projective plane is unique up to the mapping class group action if and only if the number of punctures is at most five.

\end{abstract}





\section{Introduction}\label{section1}

The first listed author \cite{chen2024systems} generalized the curve complex \cite{harvey1981boundary} by introducing the \emph{(complete) $k$-curve complex}, constructed from \emph{(complete) $k$-systems of loops} \cite{juvan1996systems}. This paper focuses on complete $1$-curve complexes. This simplicial complex is defined so that each cell corresponds to the collection of isotopy classes of simple loops forming a complete $1$-system of loops, that is: these loops lie in distinct free isotopy classes and pairwise intersect exactly once. A system of loops is called \emph{maximal} if it is maximal by inclusion (as a set), or equivalently if it corresponds to a cell of maximal dimension in the complex.

The primary aim of this paper is to characterise when maximal complete $1$-systems of loops on the $n$-punctured projective plane $N_{1,n}$ are unique up to the action of the mapping class group.

\begin{thm}
\label{main}
Maximal complete $1$-systems of loops on $N_{1,n}$ are unique up to the action of the mapping class group if and only if $n \leq 5$.
\end{thm}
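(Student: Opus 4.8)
The plan is to prove the two directions separately, routing everything through a single mapping-class-group invariant attached to a system, namely the decorated cell structure of its complement. Fix a maximal complete $1$-system $\Gamma=\{\gamma_1,\dots,\gamma_k\}$ in minimal position. Since the loops are simple and pairwise meet exactly once, their union is a $4$-valent graph $G_\Gamma\subset N_{1,n}$ with $V=\binom{k}{2}$ vertices and $E=k(k-1)$ edges, so by $V-E+F=\chi(N_{1,n})=1-n$ the number of complementary faces is $F=\binom{k}{2}+1-n$. Because the curves are in minimal position there are no unpunctured bigons or monogons; recording for each face its number of sides and the homeomorphism type of the corresponding complementary piece (how many punctures it carries, whether it is planar), together with the fat-graph (ribbon) data at the vertices — including the edge-twisting that detects the single cross-cap and distinguishes one-sided from two-sided loops — gives a combinatorial type $\mathcal{T}(\Gamma)$. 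By the change-of-coordinates principle two maximal systems lie in the same mapping class group orbit if and only if $\mathcal{T}(\Gamma_1)\cong\mathcal{T}(\Gamma_2)$. Thus the entire theorem reduces to classifying the realizable types $\mathcal{T}$ on $N_{1,n}$.

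For the uniqueness direction ($n\le5$) I would show that maximality plus the Euler-characteristic bookkeeping leaves exactly one realizable type. The identity $\sum_{\text{faces}}(\text{sides})=2E=2k(k-1)$ against $F=\binom{k}{2}+1-n$ forces the face-degree sequence into a narrow range, and I would argue that for $n\le5$ every complementary face must in fact be a disk or a once-punctured disk: if some face carried two or more punctures or were non-planar, one could exhibit a loop meeting each $\gamma_i$ exactly once and lying in a new isotopy class, contradicting maximality. The cleanest route is to cut along a one-sided loop (the cross-cap core), reducing the rest of $\Gamma$ to a rigid configuration of arcs on an $n$-punctured disk; for $n\le5$ the incidence constraints (each remaining loop crosses the cut once, no bigons) admit a unique normal form, giving uniqueness of $\mathcal{T}$ and hence of the orbit.

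For the non-uniqueness direction ($n\ge6$) I would construct two maximal systems with non-isomorphic types. The essential phenomenon is that maximality does not force filling: once $n\ge6$ there is enough room for a complementary face to ``trap'' two or more punctures, because the arc separating them need not meet every $\gamma_i$ exactly once and so cannot be added. I would exhibit, at $n=6$, one maximal system whose punctures are distributed one-per-face and a second in which some face carries several punctures (or, alternatively, two inclusion-maximal systems of different cardinalities $k$); their face-decorations differ, so $\mathcal{T}$ distinguishes them and no mapping class relates them. Adding punctures inside a trapped face then propagates both families to every $n\ge6$.

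The main obstacle is the rigidity at the core of the uniqueness direction: proving that for $n\le5$ the admissible type is literally unique, not merely constrained. Euler characteristic pins down the totals $V,E,F$ but not the arrangement, so the real content is a finite yet delicate verification that every a priori admissible face-type multiset other than the intended one is topologically unrealizable on the single-cross-cap surface — each candidate must be ruled out by producing a forbidden bigon, an inessential loop, an addable loop (contradicting maximality), or an inconsistency with non-orientability. Pinning the threshold precisely at $n=6$, i.e. showing $n=5$ remains rigid while $n=6$ genuinely bifurcates, is where the analysis must be sharpest, since it is exactly the value at which a trapped multi-punctured face first becomes unavoidable for some maximal system.
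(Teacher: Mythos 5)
Your overall architecture (cut along one loop of the system to reduce to arcs on a punctured disk, then classify configurations; distinguish systems for large $n$ by a mapping-class-group invariant) is close in spirit to the paper's ``watermelon'' method, but both directions contain genuine gaps that your final paragraph correctly senses and then leaves unresolved. For $n\le 5$ the rigidity you rely on is simply false as stated: cutting along one loop does turn the remaining loops into a $1$-system of arcs on $D_n$, but for $n=4$ and $n=5$ this arc configuration is \emph{not} unique up to homeomorphism of the disk. There is a maximal configuration on $D_4$ with four short arcs (each cutting off a single puncture) and an inequivalent one with only three short arcs, and likewise several types on $D_5$; no Euler-characteristic or bigon/monogon bookkeeping can collapse them, because they genuinely differ as planar configurations. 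Uniqueness of the orbit holds only because of \emph{puncture slides} --- mapping classes of $N_{1,n}$ that push a puncture through the cross-cap and are invisible in the cut-open disk picture --- which identify the three-short-arc system with the standard one. Your proposal has no such mechanism, so the ``finite yet delicate verification'' you defer would, as set up, terminate with several surviving normal forms and fail to prove uniqueness.

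The same example shows that your plan for $n\ge 6$ is miscalibrated: ``trapped'' punctures (a complementary region containing two punctures that cannot be separated by an addable loop) already occur at $n=4$, yet that system still lies in the standard orbit; so exhibiting two systems whose complementary pictures look different does not by itself distinguish orbits. Your decorated type $\mathcal{T}$ is indeed a complete invariant in principle, but you give neither a construction whose maximality is actually verified nor a way to compute $\mathcal{T}$ and certify non-isomorphism. The paper resolves both points concretely: (i) it builds, with explicit coordinates, a maximal watermelon with exactly $n-1$ short arcs, maximality being certified by the cardinality count $\tfrac12 n(n-1)$ (the known maximum --- which also kills your fallback of ``inclusion-maximal systems of different cardinalities,'' since all maximal complete $1$-systems on $N_{1,n}$ have the same cardinality $\tfrac12 n(n-1)+1$); and (ii) it replaces $\mathcal{T}$ by a coarser but computable invariant, the $\mathbb{Z}_2$-homology classes of the loops, packaged as the number $\# S(\gamma)$ of ``relative short loops,'' which transforms transparently under half-twists and puncture slides. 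The contradiction there is that equivalence would force some loop of the standard system to have $\# S(\gamma)=n-1$, whereas one computes $\# S(\gamma)=n$ or $\# S(\gamma)\le 4$; the inequality $n-1\ge 5$ is exactly where the threshold $n=6$ emerges --- a mechanism entirely absent from your proposal.
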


One motivation for studying the action of the mapping class group on maximal complete $1$-curve complexes comes from its conceptual proximity to the corresponding action on pants complexes \cite{MR579573}, which serve as a powerful tool in the study of the Weil--Petersson geometry of moduli spaces \cite{MR1940162,MR2262875,MR2399656,MR2872556,lackenby2024bounds}. Another motivation comes from number theory. Maximal complete $1$-systems of loops on the once-punctured torus are in natural bijection with the positive integer solutions to the (ternary) Markoff equation $x_1^2 + x_2^2 + x_3^2 = x_1 x_2 x_3$ \cite{zbMATH03108522}. This correspondence extends to a relation between the positive integer solutions of the (quaternary) Markoff--Hurwitz equation $x_1^2 + x_2^2 + x_3^2 + x_4^2 = x_1 x_2 x_3 x_4$ and maximal complete $1$-systems of loops on the thrice-punctured projective plane \cite{huang2017simple}. The combinatorics of such systems exhibit a remarkable symmetry: all maximal complete $1$-systems are equivalent up to the action of the mapping class group. This property holds for both the once-punctured torus and the thrice-punctured projective plane. This naturally leads us to ask whether such symmetry holds more generally, e.g.: on $n$-punctured projective planes.

A key prerequisite for understanding mapping class group orbits of maximal complete $1$-systems is the determination of their maximal cardinality. Malestein--Rivin--Theran \cite{malestein2014topological} showed that any maximal complete $1$-system on a closed orientable surface consists of exactly $2g+1$ loops\footnote{This cardinality also holds for orientable surfaces with boundary, as shown in \cite[Proposition 3.2]{chen2024systems}.}. With the maximal cardinality determined, Aougab \cite{aougab2014large} then proved that on orientable closed surfaces, such a system is unique up to the action of the mapping class group if and only if the genus is less than three. More generally, Aougab and Gaster \cite{aougab2017curves} established quantitative bounds for the number of orbits of maximal complete $1$-systems under the action of the mapping class group on orientable closed surfaces.

For non-orientable surfaces, the first listed author \cite{chen2024systems} proved that the cardinality of maximal complete $1$-systems of loops grows on the order of $|\chi|^{2}$, where $\chi$ is the Euler characteristic of the underlying surface. In particular, he derived the cardinality for $n$-punctured projective planes as $\frac{1}{2}n(n-1)+1$. This result paves the way for our present investigation.

\section{Background}\label{section2}

\subsection{Curves on surfaces}

\begin{defi}[Simple curves]

We refer to both loops and arcs as \emph{curves} and will often conflate a curve with its image. A \emph{simple loop} on a surface $F$ is defined by an embedding map $\gamma : \mathbb{S}^1 \hookrightarrow F$. If $F$ is a surface with non-empty boundary $\partial F$, then we define a \emph{simple arc} on $F$ as an embedding $\alpha : [0,1] \hookrightarrow F$ such that $\alpha (\partial[0,1]) \subset \partial F$.
\end{defi}

\begin{nota}
\label{nata: B is what}
We let $\gamma$ represent a loop, let $\alpha$ represent an arc and let the symbol $\beta$ represent a curve. Let $B = \mathbb{S}^1$ if $\beta$ is a simple loop and $B = [0,1]$ if $\beta$ is a simple arc.
\end{nota}

\begin{defi}[Regular neighbourhoods of curves]
\label{defi RN}

Let $\Omega$ be a finite collection of curves on $F$. A regular neighbourhood $W(\Omega)$ of $\Omega$ is a locally flat, closed subsurface of $F$ containing $\cup_{\beta \in \Omega} \beta$ such that there is a strong deformation retraction $H: W(\Omega) \times I \rightarrow W(\Omega)$ onto $\cup_{\beta \in \Omega} \beta$ where $H\vert_{(W(\Omega) \cap {\partial F}) \times I}$ is a strong deformation retraction onto $\cup_{\beta \in \Omega}   \beta \cap {\partial F}$.

\end{defi}

\begin{defi}[Free isotopies]

Let $\beta_0$ and $\beta_1$ be two curves, either both arcs or both loops. We say $\beta_0$ is \emph{freely isotopic} to $\beta_1$, if there is a collection of curves $\left\{ \beta(t,\cdot) \mid t \in [0,1] \right\}$, all of which are arcs or all loops, such that $\beta(0,\cdot) = \beta_0 , \beta(1,\cdot) = \beta_1$ and $\beta(\cdot,\cdot): [0,1] \times B \rightarrow F$ is a continuous map. We denote the \emph{free isotopy class} by $H := [\beta_0] = [\beta_1]$.

In the case where $\beta_0$ and $\beta_1$ are loops, $H$ is the usual free isotopy class of loops. If $\beta_0$ and $\beta_1$ are arcs, then $H$ refers to the isotopy class of arcs whose endpoints remain on their respective boundary components throughout the isotopy.

\end{defi}

\begin{defi}[Essential curves]

We say that a simple loop is \emph{essential} if it is not homotopically trivial, cannot be isotoped into any boundary component, and is primitive. We say that a simple arc is essential if it is not isotopic to any arc in any boundary component.

\end{defi}

\begin{defi}[$1$-sided vs. $2$-sided loops]

We say that a simple loop $\gamma$ is \emph{$1$-sided} if the regular neighbourhood of $\gamma$ is homeomorphic to a Möbius strip. We say that $\gamma$ is \emph{$2$-sided} if the regular neighbourhood of $\gamma$ is homeomorphic to an annulus.

Since the number of boundary components of the regular neighbourhood of a loop does not change under isotopy, $1$-sidedness and $2$-sidedness are well-defined for free isotopy classes.

\begin{figure}[H]
\centering
\includegraphics[width=\textwidth]{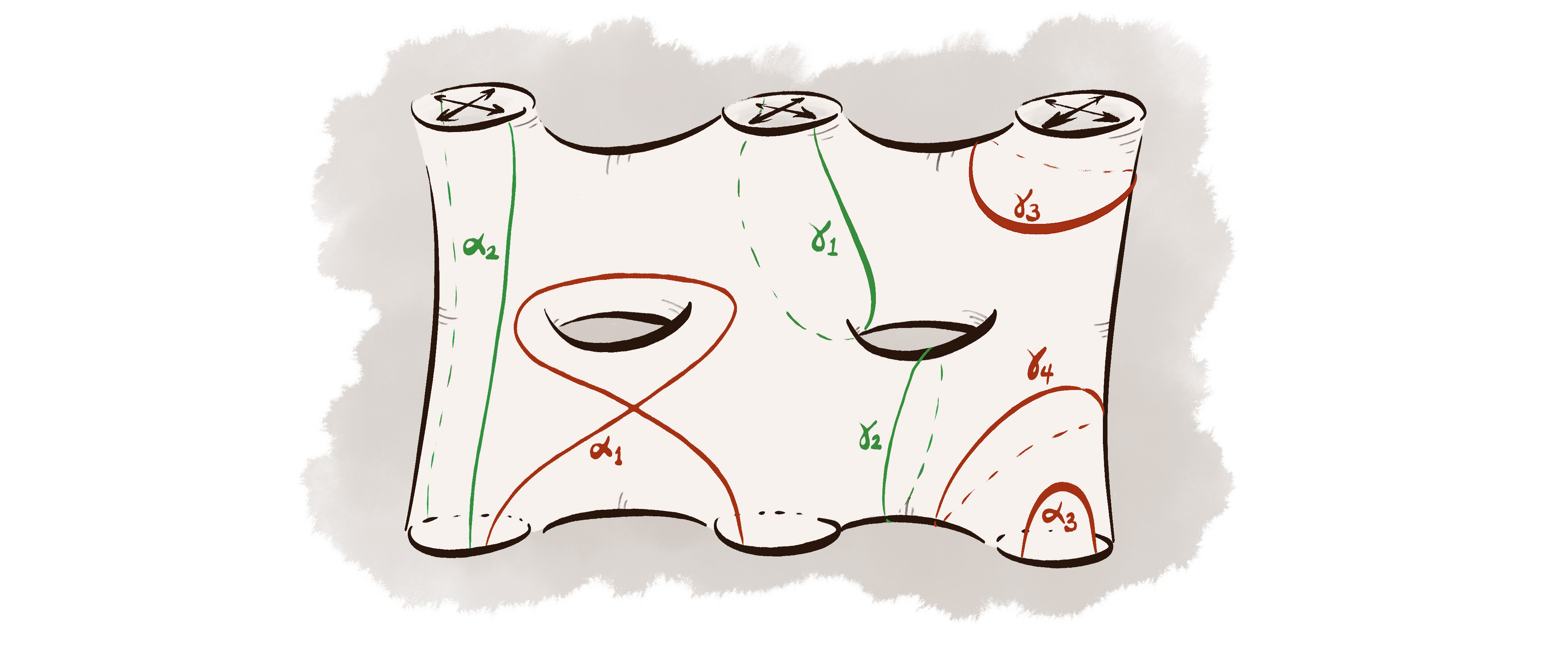}
\caption{Here are some examples of simple essential curves (in green) and non-simple or non-essential curves (in red) on $N_{7,3}$. The curve $\alpha_1$ is a non-simple arc, $\alpha_2$ is an essential simple arc, $\gamma_1$ is an essential $1$-sided simple loop, $\gamma_2$ is a essential $2$-sided simple loop, $\alpha_3$ is a non-essential simple arc and $\gamma_3, \gamma_4$ are two non-essential simple loops, where $\gamma_3$ is non-primitive.}
\label{CurvesExamples}
\end{figure}

\end{defi}

\subsection{Systems of curves}

\begin{defi}[Geometric intersection numbers]

Given two curves $\beta_1: B_1 \rightarrow F$, $\beta_2: B_2 \rightarrow F$ (see \cref{nata: B is what}), we define the \emph{geometric intersection number} $i (\beta_1 , \beta_2)$ as
\begin{align}
i (\beta_1 , \beta_2) := \# \left\{ (b_1, b_2) \in B_1 \times B_2 \middle| \beta_1(b_1) = \beta_2(b_2) \right\} . \notag
\end{align}

Consider two free isotopy classes (not necessarily distinct) $[\beta_1] , [\beta_2]$ of two simple curves $\beta_1, \beta_2$ on $F$. We define the \emph{geometric intersection number} of $[\beta_1]$ and $[\beta_2]$ as follows:
\begin{align}
i ([\beta_1] , [\beta_2]) := \min \left\{ i (\bar{\beta}_1 , \bar{\beta}_2) \middle|  \bar{\beta}_1 \in [\beta_1] , \bar{\beta}_2 \in [\beta_2] \right\}. \notag
\end{align}

\end{defi}

\begin{rmk}
We say two curves are \emph{transverse} if they intersect transversely at all intersection points. 
\end{rmk}

\begin{defi}[Systems of curves]
\label{defi: Systems of curves}

A \emph{system} $\Omega$ of curves on $F$ is either a collection $\Omega = L$ of simple loops or a collection $\Omega = A$ of simple arcs, such that the curves in $\Omega$ are essential, any two distinct curves $\beta_1, \beta_2$ are \emph{transverse, non-isotopic, and are in minimal position} (that is $i(\beta_1, \beta_2) = i([\beta_1], [\beta_2])$).

\end{defi}

\begin{defi}[$k$-systems of curves]
\label{defi: $k$-systems of curves}

We call a system of curves a \emph{$k$-system of curves} on $F$ if the geometric intersection number of any pair of elements in the system is at most $k$. Let $\mathscr{L}(F,k)$ be the collection of all $k$-systems of loops on $F$, and let $\mathscr{A}(F,k)$ be the collection of all $k$-systems of arcs on $F$.

\end{defi}

\begin{defi}[Complete $k$-systems of curves]

\label{defi: Complete $k$-systems of curves}

We call a system of curves a \emph{complete $k$-system of curves} on $F$ if the geometric intersection number of any pair of elements in the system is exactly $k$. We let $\widehat{\mathscr{L}}(F,k)$ or $\widehat{\mathscr{A}}(F,k)$ be the collection of all complete $k$-systems of loops or of arcs on $F$.

\end{defi}

\begin{defi}[Equivalent systems]
\label{defi: Equivalent systems}

Let $\Omega = \{\beta_i\}_{i \in \mathscr{I}}, \bar{\Omega} = \{ \bar{\beta}_i \}_{i \in \bar{\mathscr{I}}}$ be respective systems of curves on $F$. We say $\Omega$ and $\bar{\Omega}$ are \emph{equivalent up to mapping class group action} if there is a homeomorphism $\varphi : F \rightarrow F$ and a bijection $\psi : \mathscr{I} \rightarrow \bar{\mathscr{I}}$ such that $\varphi \circ \beta_i$ is isotopic to $\bar{\beta}_{\psi(i)}$ for every $i \in \mathscr{I}$. We call the pair $(\varphi, \psi)$ an \emph{equivalence} between $\Omega$ and $\bar{\Omega}$. In particular, we say $\Omega$ and $\bar{\Omega}$ are \emph{isotopic} if there is a bijection $\psi : \mathscr{I} \rightarrow \bar{\mathscr{I}}$ such that $\beta_i$ is isotopic to $\bar{\beta}_{\psi(i)}$ for every $i \in \mathscr{I}$.


\end{defi}




\cref{SystemEquivalence} is an example of two equivalent complete $1$-systems of loops on $S_{2,0}$.

\begin{figure}[H]
\centering
\includegraphics[width=1.0\textwidth]{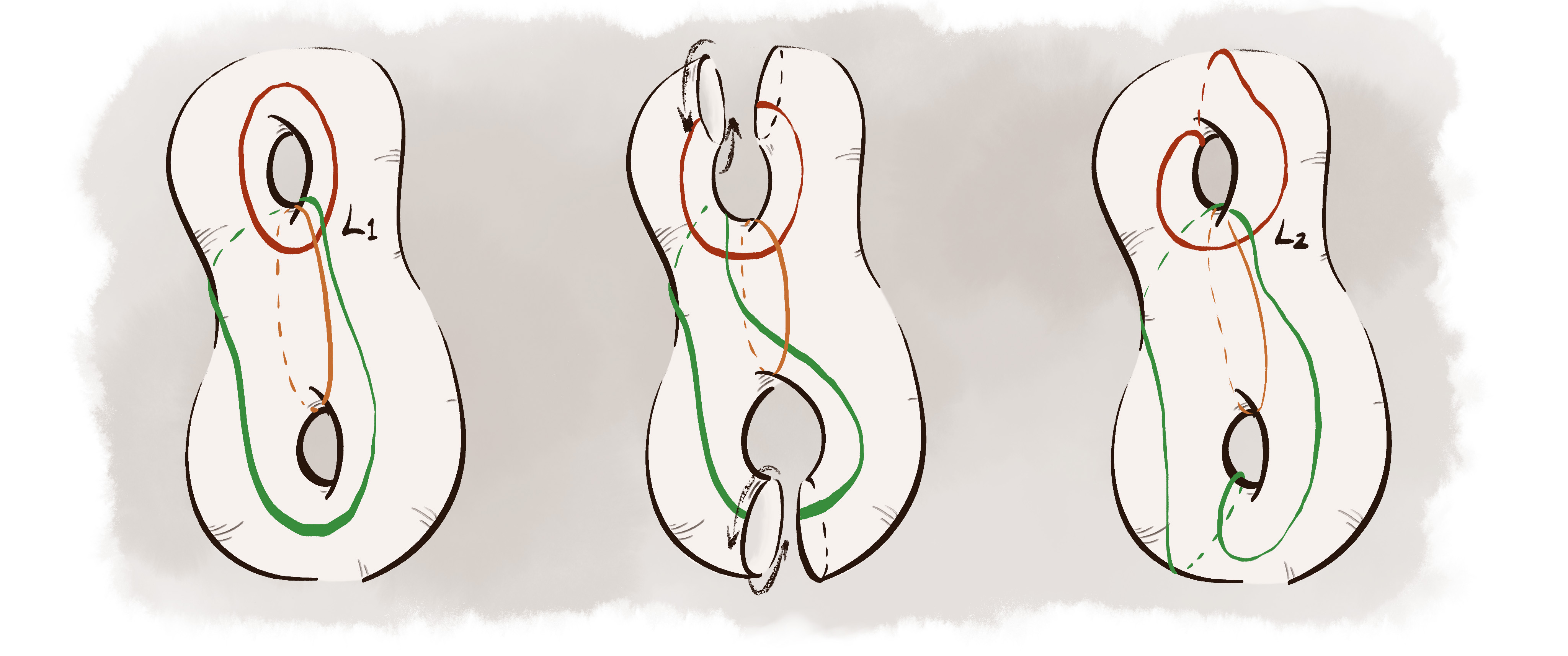}
\caption{$L_1$ and $L_2$ are two equivalent complete $1$-systems of loops on $S_{2,0}$.}
\label{SystemEquivalence}
\end{figure}

\section{Mapping Class Groups of Punctured Projective Planes}\label{section3}

Recall that the mapping class group of a surface $F$ (possibly with punctures and boundary components) is defined to be
the group of the isotopy classes of all diffeomorphisms $\varphi : F \rightarrow F$ which map $\partial F$ to itself \cite{MR1890954}. Now we discuss the mapping class groups of punctured projective planes.

\begin{defi}[Half twists]
\label{Defi: HalfTwists}
Consider a surface $F$ with at least two punctures and a subsurface $F'$ embedded in $F$ which is topologically a disk with two punctures $P_1,P_2$. Let $\alpha$ and $\beta$ be disjoint two simple arcs in $F'$ and linking $P_1$ and $P_2$. By interchanging $P_1$ and $P_2$ along $\alpha$ and $\beta$, we get a diffeomorphism of $F'$ fixing a neighborhood of $\partial F'$. this diffeomorphism can be extended to $F$ by identity. This diffeomorphism or its isotopy class is called a \emph{half twist} of $P_1$ and $P_2$ (see \cref{HalfTwist}).
\end{defi}

\begin{figure}[H]
\centering
\includegraphics[width=1.0\textwidth]{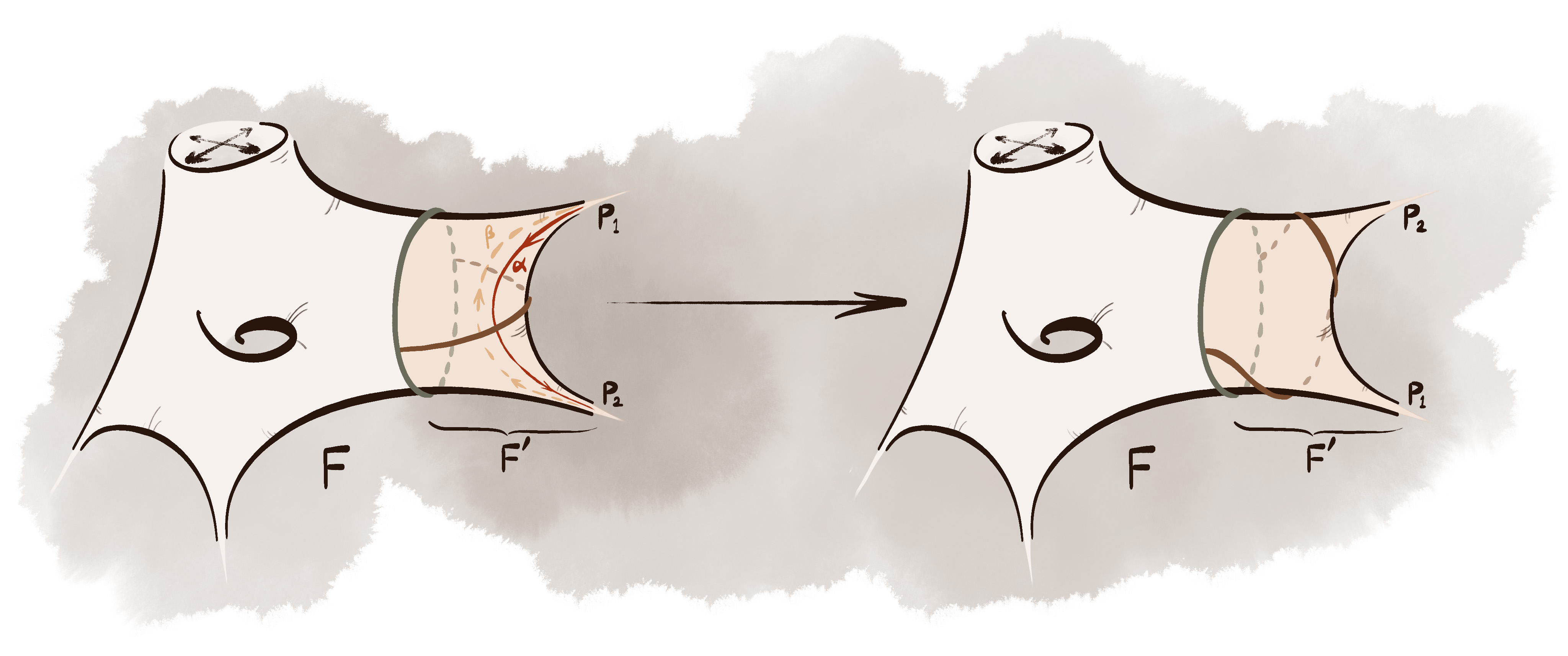}
\caption{This is an example of a half twist between $P_1$ and $P_2$, where they exchange in $F'$. The change in the brown line visually represents the exchange of the two points.}
\label{HalfTwist}
\end{figure}

\begin{defi}[Puncture slides]
\label{Defi: PunctureSlides}
Consider a non-orientable surface $F$ with punctures and a subsurface $F'$ embedded in $F$ which is topologically a Möbius band with one puncture $P$. By sliding $P$ along the core of the Möbius band, we get a diffeomorphism of $F'$ fixing a neighborhood of $\partial F'$. this diffeomorphism can be extended to $F$ by identity. This diffeomorphism or its isotopy class is called a \emph{puncture slide} of $P$ \cite{MR1890954} (see \cref{PunctureSlide}).
\end{defi}

\begin{figure}[H]
\centering
\includegraphics[width=1.0\textwidth]{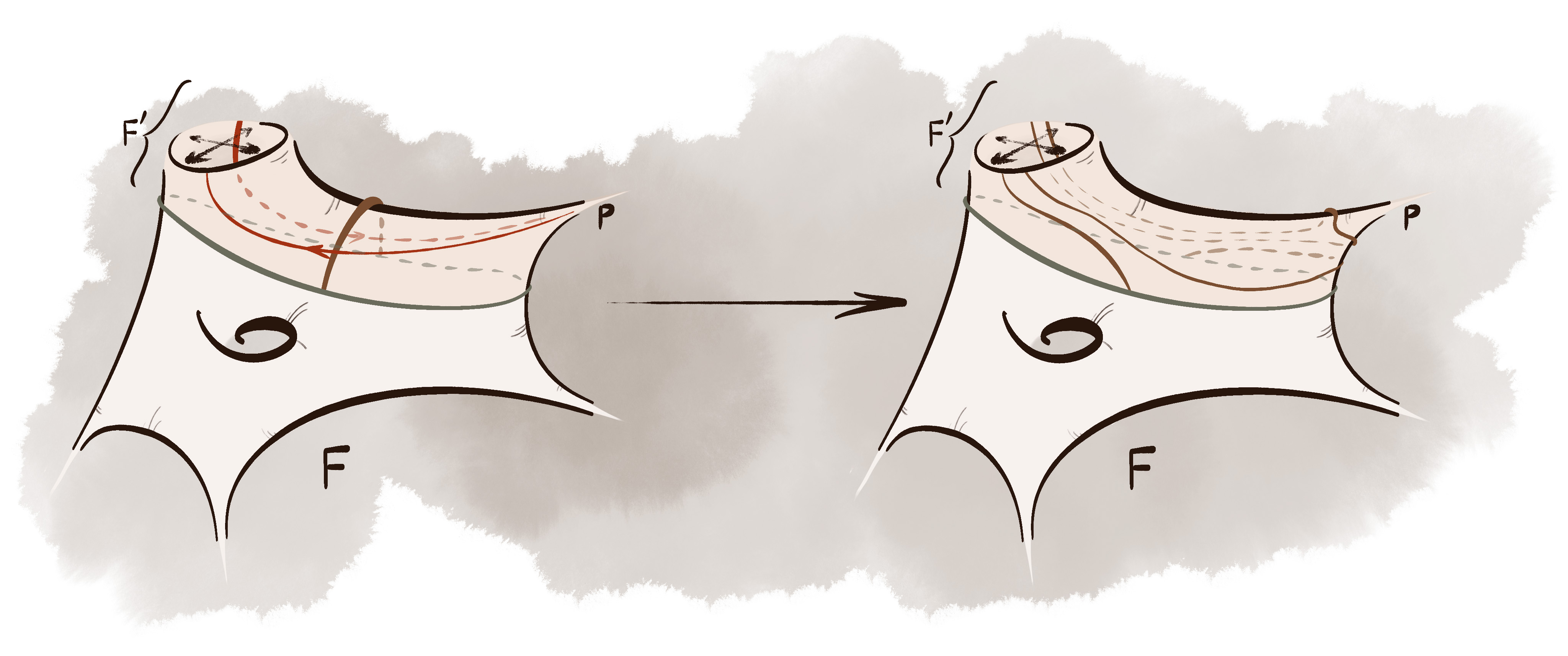}
\caption{This is an example of a puncture slide for $P$. In $F'$, $P$ moves along the red line, passing through the cross cap and returning to its original position. The change in the brown line represents this puncture slide.}
\label{PunctureSlide}
\end{figure}

\begin{thm}[Generators of mapping class groups of punctured disks]
\label{punctured disk MCG generators}
Let $D_n$ be a closed disk with $n$ punctures $P_1, \cdots, P_n$. Then the mapping class group of $D_n$ is generated by the set of all half-twists.
\end{thm}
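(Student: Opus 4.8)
The plan is to reduce to a concrete set of generators and then argue by induction on the number of punctures $n$, peeling off one puncture at a time via the Birman exact sequence. Fix the punctures on a horizontal diameter of $D_n$ in the order $P_1, \dots, P_n$, and for $1 \le i \le n-1$ let $\sigma_i$ denote the half-twist swapping the adjacent punctures $P_i$ and $P_{i+1}$, supported in a small disk meeting the diameter only near these two points. Each $\sigma_i$ is a half-twist in the sense of \cref{Defi: HalfTwists}, so the set of \emph{all} half-twists contains $\{\sigma_1, \dots, \sigma_{n-1}\}$; it therefore suffices to prove that these adjacent half-twists already generate $\operatorname{MCG}(D_n)$, understood here as the orientation-preserving mapping class group (the natural setting in which half-twists alone can generate, since a half-twist is orientation-preserving).

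First I would dispose of the base cases: for $n \le 1$ the mapping class group of a (once-)punctured disk is trivial by the Alexander trick, so the statement is vacuous. For the inductive step, assume $n \ge 2$ and that the claim holds for $D_{n-1}$. Forgetting the last puncture $P_n$ yields the Birman exact sequence
$$1 \longrightarrow \pi_1(D_{n-1}^{\circ}, P_n) \xrightarrow{\ \mathrm{Push}\ } \operatorname{MCG}(D_n) \xrightarrow{\ \mathrm{Forget}\ } \operatorname{MCG}(D_{n-1}) \longrightarrow 1,$$
where $D_{n-1}^{\circ}$ is the disk punctured at $P_1, \dots, P_{n-1}$ (the surface in which $P_n$ moves), whose fundamental group is free of rank $n-1$, generated by simple loops $x_1, \dots, x_{n-1}$ each encircling one puncture. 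For $n = 2$ the kernel is $\mathbb{Z}$ and the sequence should be checked directly (recovering $\operatorname{MCG}(D_2) \cong \mathbb{Z} = \langle \sigma_1 \rangle$), while for $n \ge 3$ the base surface has centreless free $\pi_1$ of rank $\ge 2$ and the sequence is the standard one. The strategy is then the usual one for short exact sequences: if the subgroup $H$ generated by half-twists contains the kernel and surjects onto the quotient, then $H$ is everything.

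To see that $H$ surjects onto $\operatorname{MCG}(D_{n-1})$, note that each of the half-twists $\sigma_1, \dots, \sigma_{n-2}$ of $D_{n-1}$ lifts to the identically defined half-twist of $D_n$ (its support avoids $P_n$), and by the inductive hypothesis these map under $\mathrm{Forget}$ to a generating set. It then remains to show that each point-push $\mathrm{Push}(x_i)$ lies in $H$, and this is the crux. The key geometric observation is that pushing $P_n$ around the \emph{adjacent} puncture $P_{n-1}$ is exactly $\sigma_{n-1}^{2}$, the square of a half-twist; a push around a more distant $P_i$ is then obtained by first using half-twists to bring $P_i$ into the slot adjacent to $P_n$, squaring the adjacent half-twist there, and undoing the rearrangement. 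Concretely one shows
$$\mathrm{Push}(x_i) = (\sigma_{n-1}\sigma_{n-2}\cdots\sigma_{i+1})\,\sigma_i^{2}\,(\sigma_{n-1}\sigma_{n-2}\cdots\sigma_{i+1})^{-1},$$
a product of half-twists and their inverses, so that $\mathrm{Push}(x_i) \in H$.

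The main obstacle is precisely this last step: verifying rigorously that every point-push decomposes as the displayed product of half-twists. I expect to prove it by a careful isotopy argument, tracking the trajectory of $P_n$ along $x_i$ and comparing it move by move with the braid produced by the half-twist word, using repeatedly that the conjugate of a half-twist by a mapping class is the half-twist about the image arc. Everything else — the base cases, the lifting of generators, and the formal deduction from the exact sequence — is routine once this decomposition is established. As a sanity check and an alternative route, one may instead recall that (with the boundary fixed pointwise) $\operatorname{MCG}(D_n)$ is the braid group $B_n$, and in the present setwise-boundary convention a quotient of it, together with Artin's theorem that $B_n$ is generated by the elementary braids $\sigma_i$; the argument above is essentially a self-contained reproof of that fact adapted to the half-twist language used in this paper.
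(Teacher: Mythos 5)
The paper does not actually prove \cref{punctured disk MCG generators}: it records the statement as a classical background fact (in essence Artin's theorem that the braid group is generated by elementary braids, in the setting of \cite{MR1890954}), so your attempt supplies a proof where the paper has none. Your overall route — reduce to the adjacent half-twists $\sigma_1,\dots,\sigma_{n-1}$, induct on $n$, and prove that every point-push decomposes as $\mathrm{Push}(x_i)=(\sigma_{n-1}\cdots\sigma_{i+1})\,\sigma_i^{2}\,(\sigma_{n-1}\cdots\sigma_{i+1})^{-1}$ — is the standard and correct one; that decomposition is exactly the pure braid generator $A_{i,n}$, geometrically the Dehn twist about a curve enclosing $P_i$ and $P_n$, that is, the square of a half-twist along an arc joining them. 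Your orientation caveat is also genuinely needed: the paper's definition of the mapping class group in \cref{section3} allows orientation-reversing diffeomorphisms, which no product of half-twists can produce, so the statement is only true for the orientation-preserving subgroup.

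However, the central tool is misstated, and as written the inductive step fails: there is no exact sequence $1 \to \pi_1(D_{n-1}^{\circ}, P_n) \to \operatorname{MCG}(D_n) \to \operatorname{MCG}(D_{n-1}) \to 1$, because the forgetful map is not defined on all of $\operatorname{MCG}(D_n)$. A mapping class may send $P_n$ to a different puncture — your own generator $\sigma_{n-1}$ does exactly this — and such a class has no well-defined image after filling in $P_n$. The Birman exact sequence holds only with the middle group replaced by the stabilizer of $P_n$ (equivalently, with pure mapping class groups on both sides). Your ``contains the kernel and surjects onto the quotient'' argument therefore proves only that the half-twist subgroup $H$ contains this stabilizer, and one further step is needed to conclude: the homomorphism $\operatorname{MCG}(D_n)\to S_n$ recording the permutation of punctures is already surjective on $H$ (adjacent half-twists map to adjacent transpositions), so every mapping class can be multiplied by an element of $H$ into the stabilizer, whence $H=\operatorname{MCG}(D_n)$. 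This repair is routine, but without it the induction is not valid. A minor convention point: under the paper's boundary-setwise definition, $\operatorname{MCG}(D_2)\cong\mathbb{Z}/2$ rather than $\mathbb{Z}$, since $\sigma_1^{2}$ is a boundary-parallel Dehn twist and hence trivial when the boundary may rotate; this does not affect your induction, as $\sigma_1$ still generates.
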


\begin{thm}[Generators of mapping class groups of punctured projective planes {(\cite[Theorem 4.4]{MR1890954})}]
\label{MCG generators}
Let $N_{1,n}$ be a projective plane with $n$ punctures $P_1, \cdots, P_n$. Then the mapping class group of $N_{1,n}$ is generated by all half twists and all puncture slides.
\end{thm}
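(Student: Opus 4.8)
The plan is to prove \cref{MCG generators} by induction on the number of punctures $n$, using the Birman exact sequence to peel off one puncture at a time. For $n$ large enough that $N_{1,n-1}$ has negative Euler characteristic, filling in the puncture $P_n$ gives a forgetful homomorphism fitting into a short exact sequence
\[ 1 \longrightarrow \pi_1(N_{1,n-1}, P_n) \longrightarrow \mathrm{MCG}(N_{1,n}) \longrightarrow \mathrm{MCG}(N_{1,n-1}) \longrightarrow 1, \]
whose kernel is the point-pushing subgroup. By exactness it suffices to (i) lift a generating set of $\mathrm{MCG}(N_{1,n-1})$ and (ii) generate the kernel, while verifying in each case that the resulting mapping classes are half twists and puncture slides.

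For (i), by the inductive hypothesis $\mathrm{MCG}(N_{1,n-1})$ is generated by half twists and puncture slides of $P_1,\dots,P_{n-1}$; each such generator is supported in a subsurface disjoint from a neighbourhood of $P_n$, so it lifts verbatim to the half twist (resp.\ puncture slide) of the same name on $N_{1,n}$, and this lift maps back to the original under the forgetful map. Thus the image is covered. The small base cases, where the Birman sequence degenerates (for instance $N_{1,0}=\mathbb{RP}^2$, whose mapping class group is trivial, and $N_{1,1}$, whose mapping class group is $\mathbb{Z}/2$ generated by a single puncture slide), are checked by hand.

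The crux is (ii): generating the point-pushing subgroup, which is free of rank $n-1$ since $N_{1,n-1}$ is an open surface with $\chi = 2-n$. Here I would fix a standard model of $N_{1,n-1}$ as a disk carrying one crosscap and $n-1$ punctures, with $\pi_1$ generated by small loops $x_1,\dots,x_{n-1}$ around the punctures $P_1,\dots,P_{n-1}$ together with the core loop $c$ of the crosscap (subject to $c^2 x_1\cdots x_{n-1}=1$). I then identify the point-push along each generator: pushing $P_n$ once around a puncture $P_i$ along $x_i$ is the full twist on those two punctures, hence the square of the half twist exchanging $P_i$ and $P_n$ (a product of half twists, as in the braid-group description underlying \cref{punctured disk MCG generators}); and pushing $P_n$ once along the orientation-reversing core $c$ is, up to isotopy, precisely a puncture slide of $P_n$ in the sense of \cref{Defi: PunctureSlides}. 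Since these point-pushes generate the kernel, the kernel lies in the subgroup generated by half twists and puncture slides, completing the induction.

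The main obstacle is the non-orientable contribution in step (ii): identifying the point-push along the orientation-reversing loop $c$ with a puncture slide, and organizing the generators of $\pi_1(N_{1,n-1})$ so that every point-push is visibly one of the allowed generators rather than an opaque product. Care is also needed to pin down conventions (the puncture slide versus its inverse, and the fact that its square is the Dehn twist about the two-sided boundary of the Möbius-band neighbourhood, which therefore stays inside the generated subgroup) and to treat the low-complexity surfaces where the Birman exact sequence does not directly apply. Once these identifications are secured, exactness assembles the generators of the kernel and the lifted generators of the quotient into the claimed generating set for $\mathrm{MCG}(N_{1,n})$.
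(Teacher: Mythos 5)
The paper does not actually prove \cref{MCG generators}: the statement is imported from Korkmaz \cite[Theorem 4.4]{MR1890954}, so there is no internal argument to compare against, only the cited source. Your proposal is, in outline, a reconstruction of Korkmaz's own proof: induction on the number of punctures via the point-pushing (Birman) exact sequence, together with the two key identifications that make the induction close, namely that pushing $P_n$ around a loop encircling a single puncture $P_i$ is the square of the half twist exchanging $P_i$ and $P_n$, and that pushing $P_n$ along the one-sided core of a M\"obius band is precisely a puncture slide in the sense of \cref{Defi: PunctureSlides}. Both identifications are correct, and since point-pushing is a homomorphism on $\pi_1$, pushes along your generating set $x_1,\dots,x_{n-1},c$ do generate the kernel.

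One step, however, is false as written and needs repair. Under the paper's convention (Section 3), mapping classes may permute the punctures, so the forgetful map ``fill in $P_n$'' is not defined on all of $\mathrm{MCG}(N_{1,n})$: a class carrying $P_n$ to $P_1$ does not descend to a self-map of $N_{1,n-1}$. The middle term of your exact sequence must be the stabilizer $\mathrm{MCG}(N_{1,n},P_n)$ of the puncture $P_n$, and the induction then needs the extra (easy) observation that $\mathrm{MCG}(N_{1,n})$ is generated by this stabilizer together with half twists: given any $\varphi$ with $\varphi(P_n)=P_i$, $i\neq n$, compose with a half twist exchanging $P_i$ and $P_n$ to land in the stabilizer. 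Relatedly, your list of base cases is incomplete: the inductive step requires $\chi(N_{1,n-1})<0$, i.e.\ $n\geq 3$, so $n=2$ must also be handled, either by hand (as Korkmaz does, $\mathrm{MCG}(N_{1,2})\cong \mathbb{Z}_2\oplus\mathbb{Z}_2$, generated by a half twist and a puncture slide) or by noting that exactness at the middle and right terms of the sequence, which is all that generation requires, survives even though $\pi_1(N_{1,1})\cong\mathbb{Z}$ has center and the push map is no longer injective (so your claim that the kernel is free of rank $n-1$ fails there, harmlessly). With these two repairs the argument is complete and agrees with the proof in the cited reference.
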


\section{Equivalence of Maximal $1$-Systems and Complete Maximal $1$-Systems}\label{section4}

\begin{pro}
\label{2-sided separating}
Let $\gamma$ be a $2$-sided loop on $N_{1,n}$, then $\gamma$ is separating.

\begin{proof}
This is \cite[Lemma 5.3]{chen2024systems}. For the completeness, we give the proof here.

Assume that $\gamma$ is a non-separating $2$-sided loop on $F$. Its regular neighborhood $W(\gamma)$ is an annulus. Since $\gamma$ is non-separating, $F \setminus W(\gamma)$ is connected. Therefore, we can find a simple arc $\alpha$ in $F \setminus W(\gamma)$, connecting the two boundaries of $W(\gamma)$. We can determine the topological type of $F' := W(\gamma) \cup W(\alpha)$ by the Euler characteristic (it is $-1$) and the number of boundary components (it is $1$), which is either $S_{1,1}$ or $N_{2,1}$ depending on whether the band $W(\alpha)$ connects the boundaries of $W(\gamma)$ in an orientation-preserving or reversing manner. If $F' = N_{2,1}$, since $F'$ is a subsurface of $F$ and already has two cross-caps, the number of cross-caps of $F$ must be at least $2$. This contradicts the fact that $F$ is $N_{1,n}$. If $F'= S_{1,1}$, then $F \setminus F'$ is non-orientable, otherwise, if both $F'$ and its complement were orientable, their connected sum, $F$, would also be orientable, leading to a contradiction. Thus, the surface $F \setminus F'$ must be $N_{d,n+1}$ for $d \geq 1$. Therefore, $F$ has at least one annulus (in $F'$) and one cross-cap (in the complement of $F'$). Hence, $F$ must have at least three cross-caps, which contradicts $F = N_{1,n}$.
\end{proof}
\end{pro}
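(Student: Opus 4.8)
The plan is to reduce to the closed projective plane and exploit how few curves $\mathbb{RP}^2$ carries. I would cap off each of the $n$ punctures of $N_{1,n}$ with a disk to obtain the closed surface $\mathbb{RP}^2$, letting $\bar\gamma$ denote the image of $\gamma$. Sidedness is a property of a regular neighbourhood, and such a neighbourhood of $\gamma$ is disjoint from the capping disks, so $\bar\gamma$ remains $2$-sided in $\mathbb{RP}^2$.

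The key input is the classical classification of simple closed curves on $\mathbb{RP}^2$: up to isotopy there are exactly two, the trivial (disk-bounding) curve and the $1$-sided $\mathbb{RP}^1$. Since $\bar\gamma$ is $2$-sided it cannot be isotopic to $\mathbb{RP}^1$, so it is trivial and bounds an embedded disk $\bar D \subset \mathbb{RP}^2$; in particular $\bar\gamma$ separates $\mathbb{RP}^2$ into $\bar D$ and its complementary M\"obius band. The same conclusion can be reached homologically: on a compact surface a simple closed curve is separating iff it vanishes in $H_1(\cdot;\mathbb{Z}/2\mathbb{Z})$, the mod-$2$ self-intersection detects sidedness, and on $\mathbb{RP}^2$ the unique nonzero class $\ell$ satisfies $\ell\cdot\ell=1$; a $2$-sided curve has self-intersection $0$, forcing $[\bar\gamma]=0$. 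Finally I would descend: the two complementary regions of $\gamma$ in $N_{1,n}$ are obtained from the two complementary regions of $\bar\gamma$ in $\mathbb{RP}^2$ by deleting whichever punctures they contained, and each remains a nonempty subsurface, so $\gamma$ separates $N_{1,n}$.

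The step requiring the most care is this descent through the punctures. Capping can turn an essential $\gamma$ into an inessential $\bar\gamma$ (precisely when all punctures lie on the disk side), but separation is insensitive to this, so the argument is unaffected; still, one must confirm that deleting finitely many interior points never empties a complementary region, which is clear on dimension grounds. A more hands-on alternative, closer to the surgery methods elsewhere in the paper, is to argue by contradiction: if $\gamma$ were non-separating its annular neighbourhood $W(\gamma)$ would have connected complement, one could choose an arc $\alpha$ there joining the two boundary circles of $W(\gamma)$, and a regular neighbourhood $F'$ of $\gamma\cup\alpha$ would have Euler characteristic $-1$ and a single boundary circle, hence be either $S_{1,1}$ or $N_{2,1}$. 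In the $N_{2,1}$ case $F'$ already embeds two cross-caps into $N_{1,n}$; in the $S_{1,1}$ case the complement of $F'$ must be non-orientable, and a handle together with a cross-cap amounts to three cross-caps by Dyck's theorem. Both outcomes exceed the single cross-cap of $N_{1,n}$, the desired contradiction; here the delicate points are the Euler-characteristic-and-boundary classification of $F'$ and the additivity of cross-cap number under $F'\hookrightarrow N_{1,n}$.
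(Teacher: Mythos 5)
Your primary argument is correct and takes a genuinely different route from the paper. The paper argues by contradiction via surgery: assuming $\gamma$ non-separating, it thickens $\gamma$ together with a connecting arc to a subsurface $F'$ with $\chi(F')=-1$ and one boundary circle, classifies $F'$ as $S_{1,1}$ or $N_{2,1}$, and in either case exceeds the single cross-cap of $N_{1,n}$. You instead cap the $n$ punctures to reduce to the closed projective plane, where the statement is immediate: a $2$-sided simple closed curve on $\mathbb{RP}^2$ cannot be the core $\mathbb{RP}^1$ (by the classification of simple closed curves there, or because its mod-$2$ self-intersection vanishes while the nonzero class $\ell$ has $\ell\cdot\ell=1$), so it bounds a disk and separates; separation then descends to $N_{1,n}$ because deleting finitely many interior points from the two complementary regions leaves both nonempty and open. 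All the ingredients you invoke are valid where you use them --- in particular you correctly apply ``separating iff null-homologous mod $2$'' only to the \emph{closed} surface $\mathbb{RP}^2$ (it would fail on the punctured surface itself, where horocycles are separating but homologically nontrivial), and $2$-sidedness transfers under capping since a regular neighbourhood can be chosen disjoint from the capping disks. What each approach buys: your capping argument is shorter and leans on classical facts specific to $\mathbb{RP}^2$, exploiting that separation is insensitive to punctures; the paper's surgery argument avoids quoting the curve classification and is the template that generalizes to non-orientable surfaces of higher genus (it is the proof of the cited general lemma, where one counts cross-caps rather than appealing to a known curve list). Your ``hands-on alternative'' in the final paragraph is essentially the paper's own proof, so you have in effect supplied both.
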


\begin{pro}
\label{separating cross even number}
Let $\gamma_1$ be a separating loop on a surface $F$. Then, any loop $\gamma_2$ on $F$ that is in minimal position with respect to $\gamma_1$ intersects $\gamma_1$ an even number of times.

\begin{proof}
Since $\gamma_1$ is separating, $F \setminus \gamma_1$ has two components $F_1$ and $F_2$. Consider $\gamma_2 \cap (F_1 \cup \gamma_1)$, it is a collection of arcs whose endpoints are on $\gamma_1$. There is an even number of endpoints, since every arc has two endpoints. These endpoints are precisely all of the intersection points between $\gamma_1$ and $\gamma_2$.
\end{proof}
\end{pro}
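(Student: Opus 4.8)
The plan is to exploit the defining property of a separating loop directly. Writing the complement as a disjoint union $F \setminus \gamma_1 = F_1 \sqcup F_2$ of its two components, I will track which component the loop $\gamma_2$ lies in as I traverse it, and argue that each crossing of $\gamma_1$ genuinely switches components, forcing an even total by a parity argument.

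First I would record that since $\gamma_1$ and $\gamma_2$ are transverse and in minimal position (see \cref{defi: Systems of curves}), the preimage $\gamma_2^{-1}(\gamma_1) \subset \mathbb{S}^1$ is a finite set of points $t_1, \dots, t_m$ with $m = i(\gamma_1, \gamma_2)$. If $m = 0$ the claim is immediate, so assume $m \geq 1$. These points cut $\mathbb{S}^1$ into $m$ open arcs; the image of each such arc is connected and disjoint from $\gamma_1$, hence lies entirely in $F_1$ or entirely in $F_2$.

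Next I would use transversality at each $t_j$: because $\gamma_2$ meets $\gamma_1$ transversally there, a punctured neighbourhood of $t_j$ is mapped to both sides of $\gamma_1$, so the two arcs adjacent to $t_j$ lie in \emph{different} components. Labelling the consecutive arcs around $\mathbb{S}^1$ by their component, the label therefore flips at every $t_j$. Traversing the circle once returns to the starting arc with its starting label, and this consistency forces the number of flips $m$ to be even. This is in essence a $\mathbb{Z}/2$-valued parity computation.

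There is no real obstacle here, as the statement is elementary; the only points needing care are (i) confirming that minimal position yields finitely many transverse intersections, so that the combinatorial setup is valid, and (ii) handling the edge case $m = 0$. I would also note the equivalent high-level phrasing: a separating loop bounds the subsurface $\overline{F_1} = F_1 \cup \gamma_1$ and is thus null-homologous in $H_1(F;\mathbb{Z}/2)$, so its mod-$2$ algebraic intersection with any class vanishes; in minimal position the geometric count agrees with this algebraic count modulo $2$, giving evenness. The combinatorial argument above is just an unpacking of this homological fact, and it has the advantage of avoiding any subtlety from the punctures.
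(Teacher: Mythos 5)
Your proposal is correct and takes essentially the same approach as the paper: the paper counts the endpoints of the arcs of $\gamma_2$ lying in the closed side $F_1 \cup \gamma_1$ (two per arc, hence an even total), while you count component-switches while traversing $\mathbb{S}^1$ — the same parity argument read along the circle, with the added care of handling $m=0$ and finiteness. The one point worth making explicit in either version is that transversality together with the fact that $\gamma_1$ separates a connected surface guarantees the two local sides of $\gamma_1$ lie in distinct components, so each crossing genuinely switches components.
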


\begin{cor}
\label{separating disjoint}
Let $L$ be a $1$-system of loops on $N_{1,n}$, and $\gamma_1 \in L$ be a $2$-sided loop, then $\gamma_1$ is disjoint from all other loops in $L$.

\begin{proof}
By \cref{2-sided separating}, $\gamma_1$ is separating. By \cref{separating cross even number}, the intersection number between $\gamma_1$ and any other loop $\gamma_2$ in $L$ is even. Since $L$ is a $1$-system, $i(\gamma_1, \gamma_2) \leq 1$. Hence, $i(\gamma_1, \gamma_2) = 0$, and the two loops $\gamma_1$ and $\gamma_2$ are disjoint.
\end{proof}
\end{cor}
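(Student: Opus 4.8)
The plan is to chain together the two preceding results. First I would invoke \cref{2-sided separating}: since $\gamma_1$ is a $2$-sided loop on $N_{1,n}$, it is separating, so $N_{1,n} \setminus \gamma_1$ splits into two components. Next, for an arbitrary loop $\gamma_2 \in L$ distinct from $\gamma_1$, I would apply \cref{separating cross even number} with $\gamma_1$ playing the role of the separating curve. Because $L$ is a system of loops, its elements are by definition pairwise in minimal position, so the hypothesis of \cref{separating cross even number} is satisfied and we conclude that $i(\gamma_1,\gamma_2)$ is even.

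The final step is an elementary parity-plus-bound observation. Since $L$ is a $1$-system, the geometric intersection number of any two of its loops is at most $1$, so $i(\gamma_1,\gamma_2) \in \{0,1\}$. Combined with the conclusion that this number is even, the only possibility is $i(\gamma_1,\gamma_2) = 0$, i.e.\ $\gamma_1$ and $\gamma_2$ are disjoint. As $\gamma_2$ was arbitrary, $\gamma_1$ is disjoint from every other loop in $L$.

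I do not anticipate a serious obstacle here, as the statement is essentially a formal consequence of \cref{2-sided separating} and \cref{separating cross even number} together with the defining bound of a $1$-system. The one point deserving a moment of care is confirming that the parity argument of \cref{separating cross even number} applies to $\gamma_2$ regardless of whether $\gamma_2$ is $1$-sided or $2$-sided. That argument only counts the endpoints lying on $\gamma_1$ of the arcs comprising $\gamma_2 \cap (F_1 \cup \gamma_1)$, and this count is insensitive to the sidedness of $\gamma_2$, so no separate case analysis is needed.
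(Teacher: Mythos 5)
Your proposal is correct and follows exactly the same route as the paper's proof: \cref{2-sided separating} to get separation, \cref{separating cross even number} for parity, and the $1$-system bound to force $i(\gamma_1,\gamma_2)=0$. The extra remark that minimal position holds by the definition of a system and that the parity argument is insensitive to the sidedness of $\gamma_2$ is a fine (if not strictly necessary) sanity check.
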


\begin{pro}
\label{1-sided loops intersect}
Let $\gamma_1, \gamma_2$ be two $1$-sided loops on $N_{1,n}$, then $\gamma_1$ and $\gamma_2$ intersect.

\begin{proof}
Assume that $\gamma_1, \gamma_2 \in L$ are disjoint. We cut $N_{1,n}$ along $\gamma_1$ and obtain an orientable surface $S_{0,n+1}$. Since $\gamma_2$ does not intersect $\gamma_1$, $\gamma_2$ remains a loop on $S_{0,n+1}$. This leads to a contradiction: $\gamma_2$ is $1$-sided, but it lies on an orientable surface.
\end{proof}
\end{pro}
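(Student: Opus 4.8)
The plan is to argue by contradiction, exploiting the fact that $N_{1,n}$ carries exactly one cross-cap and that a $1$-sided loop is precisely a loop whose regular neighbourhood absorbs that non-orientability. Suppose the two $1$-sided loops $\gamma_1, \gamma_2$ could be realised disjointly, i.e.\ placed in minimal position with $i(\gamma_1, \gamma_2) = 0$. I would then cut $N_{1,n}$ along $\gamma_1$ and show that the resulting surface is orientable yet still contains $\gamma_2$ as a $1$-sided loop, which is impossible.

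The central step is to identify the cut surface and establish its orientability. Since $\gamma_1$ is $1$-sided, its regular neighbourhood $W(\gamma_1)$ is a Möbius band, whose boundary $\partial W(\gamma_1)$ is a single circle. Cutting $N_{1,n}$ along $\gamma_1$ therefore produces a surface $F'$ with one additional boundary component, and conversely $N_{1,n}$ is recovered from $F'$ by gluing a Möbius band along that circle, i.e.\ by attaching a cross-cap. Because attaching a cross-cap raises the non-orientable genus by one and $N_{1,n}$ has non-orientable genus exactly $1$, the surface $F'$ must have non-orientable genus $0$; that is, $F'$ is orientable. A quick Euler-characteristic count, using $\chi(N_{1,n}) = 1 - n$ and the fact that cutting along a circle preserves $\chi$, together with the single boundary circle, then pins down $F'$ as $S_{0,n+1}$.

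It remains to track $\gamma_2$. Choosing the regular neighbourhoods of $\gamma_1$ and $\gamma_2$ to be disjoint — which is possible precisely because $\gamma_1$ and $\gamma_2$ are disjoint — the cut along $\gamma_1$ does not touch $W(\gamma_2)$, so $\gamma_2$ descends to a simple loop in $F'$ whose regular neighbourhood is still the Möbius band $W(\gamma_2)$. Hence $\gamma_2$ would be a $1$-sided loop on $F'$. But $F'$ is orientable, and an orientable surface admits no $1$-sided simple loop, since every regular neighbourhood of a simple closed curve there is an annulus. This contradiction shows that $\gamma_1$ and $\gamma_2$ cannot be disjoint, proving the proposition.

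The main obstacle is the orientability claim for $F'$; everything else is routine bookkeeping. I expect the cleanest justification to be the cross-cap/genus argument just sketched, phrased through the classification of surfaces (a compact surface with boundary is determined by orientability, Euler characteristic, and number of boundary components). A purely homological alternative — working with the $\mathbb{Z}/2$ self-intersection form on $H_1(N_{1,n}; \mathbb{Z}/2)$, where $1$-sidedness corresponds to self-intersection $1$ — does detect that each $\gamma_i$ is $1$-sided, but disjointness only forces their mutual intersection to vanish and does not by itself yield a contradiction; it fails to encode the global constraint that $N_{1,n}$ has only a single unit of non-orientability, which is exactly what the cutting argument supplies.
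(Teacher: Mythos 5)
Your proof is correct and takes essentially the same route as the paper: cut $N_{1,n}$ along $\gamma_1$, identify the resulting surface as the orientable $S_{0,n+1}$, and observe that the disjoint $1$-sided loop $\gamma_2$ would survive as a $1$-sided loop on an orientable surface, a contradiction --- your cross-cap count simply supplies the orientability justification that the paper leaves implicit. One aside: your closing remark is mistaken, since the $\mathbb{Z}_2$-homological route does in fact work; capping off the punctures sends both $1$-sided curves to the generator $c$ of $H_1(\mathbb{RP}^2;\mathbb{Z}_2)$, so $[\gamma_1]\cdot[\gamma_2]=c\cdot c=1$ forces an odd, hence nonzero, geometric intersection number.
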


\begin{lem}
\label{lem: max no 2-sided}
Let $L$ be a maximal $1$-system of loops on $F := N_{1,n}$, then all loops $\gamma$ in $L$ are $1$-sided.

\begin{proof}
Since $L$ is maximal, by \cite[Corollary 1.6]{chen2024systems},
\begin{align}
\label{ineq1}
\# L = \|\mathscr{L}(F,1)\|_\infty \geq \|\widehat{\mathscr{L}}(F,1)\|_\infty = \tfrac{1}{2}n^2 - \tfrac{1}{2}n + 1.
\end{align}

We let $L_1$ be the collection of $1$-sided loops in $L$, and let $L_2$ be the collection of $2$-sided loops in $L$. Assume that there are $m\geq 1$ loops in $L$ which are $2$-sided, that is $\# L_2 = m$. By \cref{2-sided separating} and \cref{separating disjoint}, $L_2$ is a $0$-system and $L_2$ separates $F$ into $m + 1$ pieces, which we labeled by $F_0 , \cdots, F_m$. Exactly one of the $\{F_{i}\}_i$ is non-orientable (see \cref{AllLoopsAreOneSided}), without loss of generality, let $F_0$ be the non-orientable subsurface. We iteratively cut the surface along the $2$-sided loops in $L_2$, starting from those loops that are farthest from the non-orientable part. The orientable subsurface that is cut off each time we cut along a loop must be a sphere with at least three combined boundary components and punctures, meaning that the non-orientable surface loses at least one boundary component after each cutting. Hence, suppose that $F_0 = N_{1,t}$, then $t \leq n-m$. Note that $t$ cannot be $1$, because if it were equal to $1$, the $2$-sided loop that is the unique boundary of $F_0$ would no longer be essential. Hence, we have $n-m \geq 2$.

\begin{figure}[H]
\centering
\includegraphics[width=1.0\textwidth]{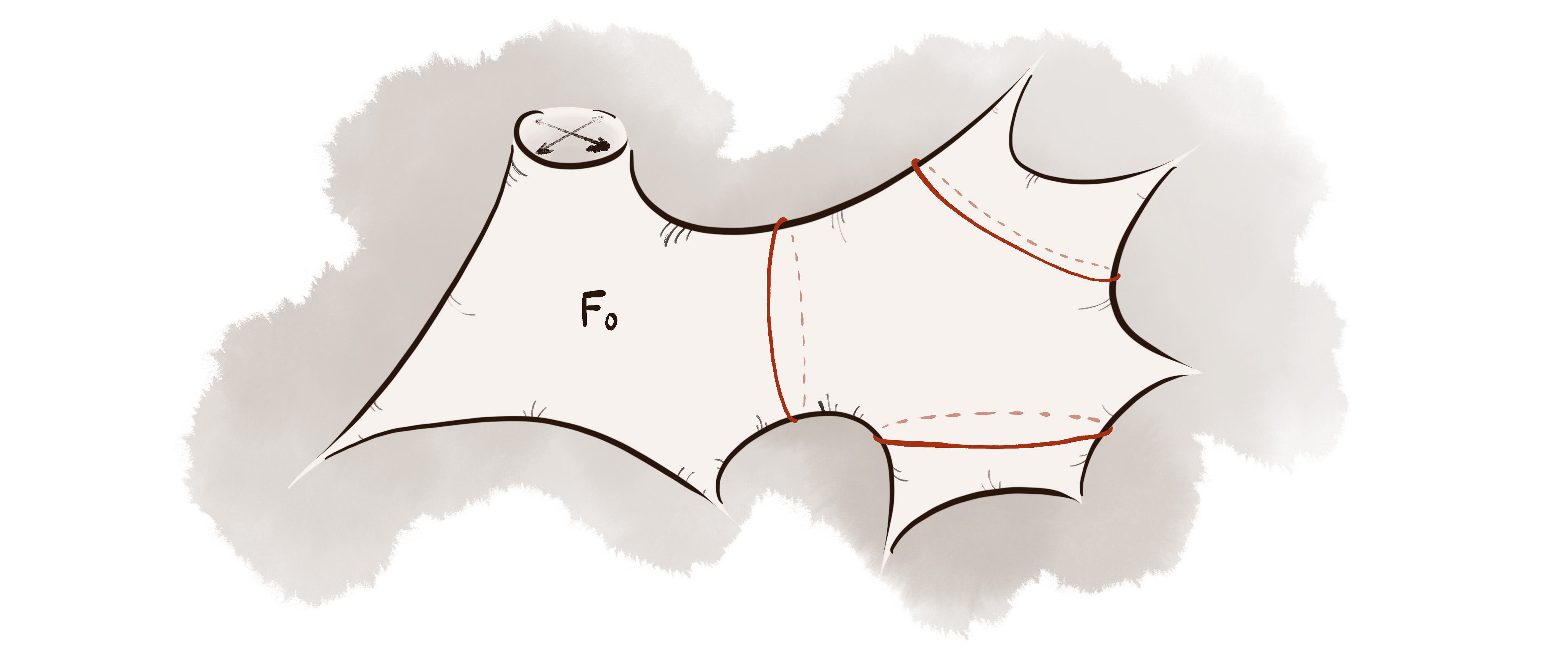}
\caption{$L_2$ divides $F$ into $m+1$ pieces. Assume that $F_0$ is the non-orientable piece among them.}
\label{AllLoopsAreOneSided}
\end{figure}

Since the regular neighborhoods of $1$-sided loops are Möbius strips, all $1$-sided loops (i.e., the loops in $L_1$) lie entirely within $F_0$. By \cref{1-sided loops intersect}, $L_1$ is complete. Hence, by \cite[Corollary 1.6]{chen2024systems},
\begin{align}
\# L &= \# L_1 + \# L_2 \leq \|\widehat{\mathscr{L}}(F_0,1)\|_\infty + m \leq \|\widehat{\mathscr{L}}(N_{1,n-m},1)\|_\infty + m \notag \\ 
&= \tfrac{1}{2}(n-m)^2 - \tfrac{1}{2}(n-m) + 1 + m = \tfrac{1}{2}n^2 - \tfrac{1}{2}n + 1 + \tfrac{1}{2} m^2 + \tfrac{3}{2} m - mn \notag \\
&\leq \tfrac{1}{2}n^2 - \tfrac{1}{2}n + 1 + \tfrac{1}{2} m^2 + \tfrac{3}{2} m - m(m+2) = \tfrac{1}{2}n^2 - \tfrac{1}{2}n + 1 + \tfrac{1}{2} (-m^2 - 3m) \notag \\
\label{ineq2}
&\leq \tfrac{1}{2}n^2 - \tfrac{1}{2}n + 1.
\end{align}

Combining \cref{ineq1} and \cref{ineq2}, $\# L = \tfrac{1}{2}n^2 - \tfrac{1}{2}n + 1$. At this point, the inequalities in the above equation are inequalities, i.e., $m=0$. Hence, all loops $\gamma$ in $L$ are $1$-sided.
\end{proof}
\end{lem}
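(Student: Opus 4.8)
The plan is to argue by contradiction via a cardinality count: assuming $L$ contains at least one $2$-sided loop, I would show that $\#L$ is forced strictly below $\tfrac{1}{2}n^2 - \tfrac{1}{2}n + 1$, contradicting the maximality bound $\#L \geq \|\widehat{\mathscr{L}}(F,1)\|_\infty = \tfrac{1}{2}n^2 - \tfrac{1}{2}n + 1$ supplied by \cite[Corollary 1.6]{chen2024systems}. So the whole argument is a comparison between the number of loops one can produce with $m \geq 1$ two-sided loops present and the number available when they are all $1$-sided.

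First I would split $L$ into its $1$-sided loops $L_1$ and its $2$-sided loops $L_2$, writing $m = \#L_2 \geq 1$. By \cref{2-sided separating} each loop in $L_2$ is separating, and by \cref{separating disjoint} the loops of $L_2$ are pairwise disjoint, so $L_2$ cuts $F$ into $m+1$ subsurfaces. Since $N_{1,n}$ carries exactly one cross-cap, exactly one of these pieces — call it $F_0 \cong N_{1,t}$ — is non-orientable, while the other $m$ pieces are orientable. The key structural observation is then that every loop of $L_1$ must lie inside $F_0$, because the regular neighbourhood of a $1$-sided loop is a Möbius band and cannot sit in an orientable piece; and by \cref{1-sided loops intersect} the loops of $L_1$ pairwise intersect, so inside $F_0$ they form a \emph{complete} $1$-system.

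Next I would bound $t$. Cutting along the loops of $L_2$ one at a time, starting from those farthest from $F_0$, each cut detaches an orientable piece which, being essential and bounded by a separating curve, must be a sphere with at least three combined boundary components and punctures; hence the non-orientable side loses at least one puncture or boundary per cut, yielding $t \leq n - m$. One must also record that $t \neq 1$, since otherwise the unique boundary loop of $F_0$ would fail to be essential, so in fact $n - m \geq t \geq 2$. With these in hand, the count closes: applying \cite[Corollary 1.6]{chen2024systems} to $F_0$ and using monotonicity of $\tfrac{1}{2}t^2 - \tfrac{1}{2}t + 1$ together with $t \leq n-m$ gives $\#L = \#L_1 + \#L_2 \leq \tfrac{1}{2}(n-m)^2 - \tfrac{1}{2}(n-m) + 1 + m$; expanding and invoking $n \geq m+2$ to estimate $mn \geq m(m+2)$ collapses the right-hand side to $\tfrac{1}{2}n^2 - \tfrac{1}{2}n + 1 - \tfrac{1}{2}(m^2 + m)$, which is $\leq \tfrac{1}{2}n^2 - \tfrac{1}{2}n + 1$ with equality only if $m = 0$.

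I expect the main obstacle to be the topological bookkeeping in the third step rather than the final algebra. Specifically, the delicate point is justifying $t \leq n - m$: one must verify that each successive cut along a $2$-sided separating loop peels off an orientable subsurface that is genuinely a sphere with at least three combined boundary/puncture ends — never an annulus or a once-punctured disk — so that the non-orientable part strictly loses an end at every stage. This is exactly the quantitative input that tips the arithmetic below the maximal cardinality, and getting the inductive cutting argument clean (while tracking that the innermost piece $F_0$ remains $N_{1,t}$ with $t \geq 2$) is where the care is required.
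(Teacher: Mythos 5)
Your proposal is correct and follows essentially the same route as the paper's proof: the same splitting into $L_1$ and $L_2$, the same three supporting results (\cref{2-sided separating}, \cref{separating disjoint}, \cref{1-sided loops intersect}), the same iterative cutting argument giving $t \leq n-m$ with $t \geq 2$, and the same cardinality comparison via \cite[Corollary 1.6]{chen2024systems}. In fact your final algebra, which yields the deficit $-\tfrac{1}{2}(m^2+m)$, is slightly cleaner than the paper's displayed computation (which contains a harmless arithmetic slip writing $\tfrac{1}{2}(-m^2-3m)$), but the logic and conclusion are identical.
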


\begin{lem}
\label{lem: max to complete}
Let $L$ be an maximal $1$-system of loops on $N_{1,n}$, then $L$ is complete (see \cref{defi: Complete $k$-systems of curves}).

\begin{proof}
Assume that $L$ is maximal but incomplete. Let $L_0 \subset L$ be a largest $0$-system in $L$. Since $L$ is incomplete, $\# L_0 \geq 2$. Hence, by \cref{1-sided loops intersect}, there is a $2$-sided loop $\gamma$ in $L_0 \subset L$, \cref{lem: max no 2-sided} then contradicts the maximality of $L$.
\end{proof}
\end{lem}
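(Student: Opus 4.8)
The plan is to read off completeness directly from the two structural facts established immediately above, since for a $1$-system the notions of ``complete'' and ``no disjoint pair'' coincide. Indeed, completeness means that every pair of distinct loops in $L$ has geometric intersection number \emph{exactly} one; as $L$ is already a $1$-system, every such pair intersects \emph{at most} once, so the only thing left to show is that no two loops of $L$ are disjoint.

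First I would invoke \cref{lem: max no 2-sided}: maximality of $L$ forces every loop $\gamma \in L$ to be $1$-sided. This is where all the real work lives, and it is already done, so the present lemma is essentially a corollary. Then I would apply \cref{1-sided loops intersect}, which guarantees that any two $1$-sided loops on $N_{1,n}$ must intersect, hence have geometric intersection number at least one. Combining this lower bound with the upper bound from the $1$-system hypothesis yields $i(\gamma_1,\gamma_2) = 1$ for every pair of distinct $\gamma_1,\gamma_2 \in L$, so $L$ is complete.

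A slightly more indirect variant, which sidesteps restating that all loops are $1$-sided, runs by contradiction: were $L$ incomplete, some pair of its loops would be disjoint, so a largest $0$-subsystem $L_0 \subseteq L$ would satisfy $\# L_0 \geq 2$; but by \cref{1-sided loops intersect} two $1$-sided loops are never disjoint, forcing $L_0$ to contain a $2$-sided loop and contradicting \cref{lem: max no 2-sided}. In either formulation there is no genuine obstacle at this stage: the entire difficulty, namely the Euler-characteristic bookkeeping that rules out $2$-sided loops in a maximal system, has already been absorbed into \cref{lem: max no 2-sided}.
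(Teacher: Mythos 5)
Your proposal is correct and takes essentially the same approach as the paper: the paper's own proof is exactly your ``indirect variant'' (a largest $0$-subsystem $L_0$ with $\# L_0 \geq 2$ must contain a $2$-sided loop by \cref{1-sided loops intersect}, contradicting \cref{lem: max no 2-sided}). Your direct formulation, combining the two lemmas without contradiction, is a clean equivalent restatement of the same argument.
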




\section{Watermelons and Standard Maximal Watermelons}\label{section5}

To study the uniqueness of maximal complete $1$-systems of loops, we introduce a concept called \emph{watermelons}, which graphically represent loops on the punctured projective plane.

\subsection{Watermelons and marked complete 1-systems of loops}
\label{subsection: Watermelons}

In \cref{subsection: Watermelons}, we will first define what marked complete $1$-systems of loops and watermelons are. The ultimate goal is to establish a one-to-one correspondence (with the relevant definitions provided immediately following the theorem):

\begin{thm}[$\mathcal{M}=\mathcal{W}$]
\label{thm: watermelons and marked loops systems}
On $N_{1,n}$, there exists a one-to-one correspondence between:
\begin{align}
\mathcal{M} := \{ \text{Marked complete $1$-syste} & \text{ms of loops}\} / \text{mark equivalence} \notag \\
&\rotatebox{90}{$\longleftrightarrow$} {^{^{1:1}}}  \notag \\
\mathcal{W} := \{ \text{Watermelons} \} / \text{w} & \text{atermelon equivalence}. \notag
\end{align}

\end{thm}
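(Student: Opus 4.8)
The plan is to realise the correspondence as a pair of mutually inverse maps --- a \emph{cutting} map $\Phi\colon\mathcal M\to\mathcal W$ and a \emph{gluing} map $\Psi\colon\mathcal W\to\mathcal M$ --- and then to check that each descends to the indicated quotient and that $\Psi\circ\Phi$ and $\Phi\circ\Psi$ are the identity. The geometric engine throughout is the cut-and-reglue operation along a $1$-sided loop.

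First I would construct $\Phi$. Let $L$ be a marked complete $1$-system, and use the marking to single out its distinguished loop $\gamma_0\in L$. By \cref{lem: max no 2-sided} every loop of $L$ is $1$-sided, so by \cref{1-sided loops intersect} the loops meet pairwise exactly once. Cutting $N_{1,n}$ along $\gamma_0$ produces, as in the proof of \cref{1-sided loops intersect}, the planar surface $S_{0,n+1}$: a disk with $n$ punctures whose new boundary circle is the connected orientation double cover of $\gamma_0$, equipped with the covering involution $\iota$. Each of the remaining $\tfrac12 n(n-1)$ loops meets $\gamma_0$ once, hence opens into a single essential arc whose two endpoints are interchanged by $\iota$; and the unique crossing between any two loops survives as the unique crossing of the corresponding arcs. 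The resulting $\iota$-symmetric arc diagram in the punctured disk, put in normal form and recorded together with the puncture labels, is the watermelon $\Phi(L)$. I would then verify descent: an ambient isotopy of $L$, and a change of marking within a mark-equivalence class, induce an isotopy respectively an allowed move of the diagram, so that $\Phi$ is well defined on $\mathcal M$.

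For the inverse $\Psi$ I would run this backwards: given a watermelon, reglue the boundary circle of the punctured disk by the involution $\iota$ to recover $N_{1,n}$ with its crosscap core $\gamma_0$, and close each arc up across the crosscap into a $1$-sided loop. The content here is to prove that the reconstructed family is a genuine complete $1$-system --- the loops are essential, pairwise non-isotopic, and in minimal position with exactly one mutual intersection --- i.e. that the regluing introduces neither bigons nor spurious intersections along $\gamma_0$. This I would extract from the bigon criterion together with the combinatorial constraints packaged into the watermelon (each arc joining an $\iota$-antipodal pair of boundary points, the prescribed single-crossing pattern, and the indexing of the $\tfrac12 n(n-1)$ arcs). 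For descent on $\mathcal W$, I would show that every elementary watermelon move is realised by a homeomorphism of $N_{1,n}$, producing these from the generators of \cref{MCG generators} (half twists and puncture slides) together with \cref{punctured disk MCG generators} on the disk side, so that watermelon-equivalent diagrams yield mark-equivalent systems. Finally, $\Psi\circ\Phi=\mathrm{id}$ and $\Phi\circ\Psi=\mathrm{id}$ follow because cutting along $\gamma_0$ and regluing by $\iota$ are inverse surface operations, and an arc in a punctured disk is determined up to isotopy by its endpoints and the partition of punctures it induces, so no isotopy information is lost or created in either direction.

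The step I expect to be the main obstacle is faithfulness-plus-realizability at the regluing stage. Concretely: proving that every abstract watermelon closes up to an honest complete $1$-system in minimal position (bigon-freeness after passing the arcs across the crosscap) and, dually, that two systems with the same watermelon are genuinely mark-equivalent rather than merely combinatorially indistinguishable. Because $N_{1,n}$ is non-orientable, the bookkeeping around the covering involution and the $1$-sidedness is delicate --- one must ensure the closed-up loops are $1$-sided and meet exactly once, rather than accidentally coinciding or creating bigons --- and I would isolate this in a dedicated bigon-analysis lemma before assembling the two maps into the stated bijection.
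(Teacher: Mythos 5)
Your overall architecture mirrors the paper's proof exactly: cut along the marked loop to obtain an arc system in a punctured disk (the paper's $f_1$, \cref{Associated watermelons}), reglue by the boundary involution to close arcs into loops (the paper's $f_2$, \cref{Associated systems of loops}), then check descent to the two quotients and mutual inverseness. However, there is a genuine gap in your cutting map: you never use the \emph{orientation} of the mark, and without it $\Phi$ is not well defined into $\mathcal{W}$. The cut surface $S_{0,n+1}$ is orientable but carries no preferred orientation, so identifying it with the standard disk involves a choice, and the two choices yield mirror-image arc diagrams. Since watermelon equivalence (\cref{defi: Watermelon equivalence}) requires an orientation-\emph{preserving} homeomorphism of $D_n$ --- the paper's remark after that definition stresses precisely this point --- a chiral diagram and its mirror image are in general distinct elements of $\mathcal{W}$, so ``the diagram up to reflection'' is not an element of $\mathcal{W}$. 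The orientation of $\overrightarrow{\gamma_0}$ is exactly the datum that kills this ambiguity: it lifts to an orientation of the boundary double cover and hence orients the disk. The paper uses it by sending $\overrightarrow{\gamma_0}$ to $\partial D_n/{\sim}$ with the counterclockwise orientation, and in the well-definedness check the composite $\varphi'\circ\Phi\circ\varphi^{-1}$ is orientation-preserving precisely because mark equivalence respects the marks' orientations. Your proposal, which uses the marking only ``to single out'' $\gamma_0$, cannot distinguish $(L,\overrightarrow{\gamma_0})$ from $(L,\overleftarrow{\gamma_0})$, yet these are generally inequivalent in $\mathcal{M}$ and their diagrams generally inequivalent in $\mathcal{W}$.

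A second gap is that the step you defer to ``a dedicated bigon-analysis lemma'' --- that regluing an abstract watermelon produces simple loops that pairwise intersect \emph{exactly once} --- is not a side issue but the bulk of the paper's proof. The paper carries it out in \cref{Associated systems of loops} via an explicit collar construction (the spiral arcs in $R_\epsilon$, \cref{eq: gamma_alpha}) and then a two-part count: the cut-open arc of one loop separates $D_n$, so any two of the glued loops meet an odd number of times; the explicit geometry bounds the intersections inside and outside the collar by two in total; odd and at most two forces exactly one. Without this (or an equivalent) argument the theorem is not proved. Two smaller points: your appeal to \cref{lem: max no 2-sided} for $1$-sidedness is misplaced, since that lemma concerns \emph{maximal} systems while the theorem concerns arbitrary complete $1$-systems --- the correct source is \cref{2-sided separating} together with \cref{separating cross even number}, which show a $2$-sided loop would be disjoint from all others, contradicting completeness when the system has at least two loops; and your plan to prove descent of $\Psi$ through mapping class group generators is unnecessarily heavy --- the paper simply observes that a watermelon equivalence, after standardizing near $\partial D_n$, descends to a self-homeomorphism of $D_n/{\sim}$ carrying one glued system to the other.
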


\begin{defi}[Marked complete $1$-systems of loops]
\label{defi: Marked complete 1-systems of loops}
Let $L$ be a complete $1$-system of loops on $N_{1,n}$, and let $\gamma_0$ be an \emph{oriented} loop in $L$. We call $(L, \gamma_0)$ a \emph{marked complete $1$-system of loops}, and refer to $\gamma_0$ as the \emph{mark} of $(L, \gamma_0)$.
\end{defi}

\begin{defi}[Mark equivalence]
\label{defi: Mark equivalence}
Let $(L, \overrightarrow{\gamma_0})$ and $(L', \overrightarrow{\gamma'_0})$ be two marked complete $1$-systems of loops. We say that $(L, \overrightarrow{\gamma_0})$ and $(L', \overrightarrow{\gamma'_0})$ are \emph{mark equivalent} if there exists an equivalence $(\varphi, \psi)$ between $L$ and $L'$ as systems of loops (see \cref{defi: Equivalent systems}), such that $\psi(0) = 0$ and the loops $\varphi \circ \gamma_0$ and $\gamma'_{\psi(0)} = \gamma'_0$ are isotopic and compatibly oriented.
\end{defi}

Let $D_n \subset \mathbb{C}$ denote the unit disk with $n$ interior punctures; identifying antipodal points on $\partial D_n$, i.e., $x \sim -x$, gives a representation of $N_{1,n}$ as $D_n / {\sim}$.

\begin{defi}[Watermelons (see \cref{WatermelonDiagram})]
We say a $1$-system of arcs $A$ (see \cref{defi: Systems of curves}, which requires that all arcs be in pairwise minimal position) on $D_n$ (where $n \geq 2$) is a \emph{watermelon}.
\end{defi}

\begin{figure}[H]
\centering
\includegraphics[width=1.0\textwidth]{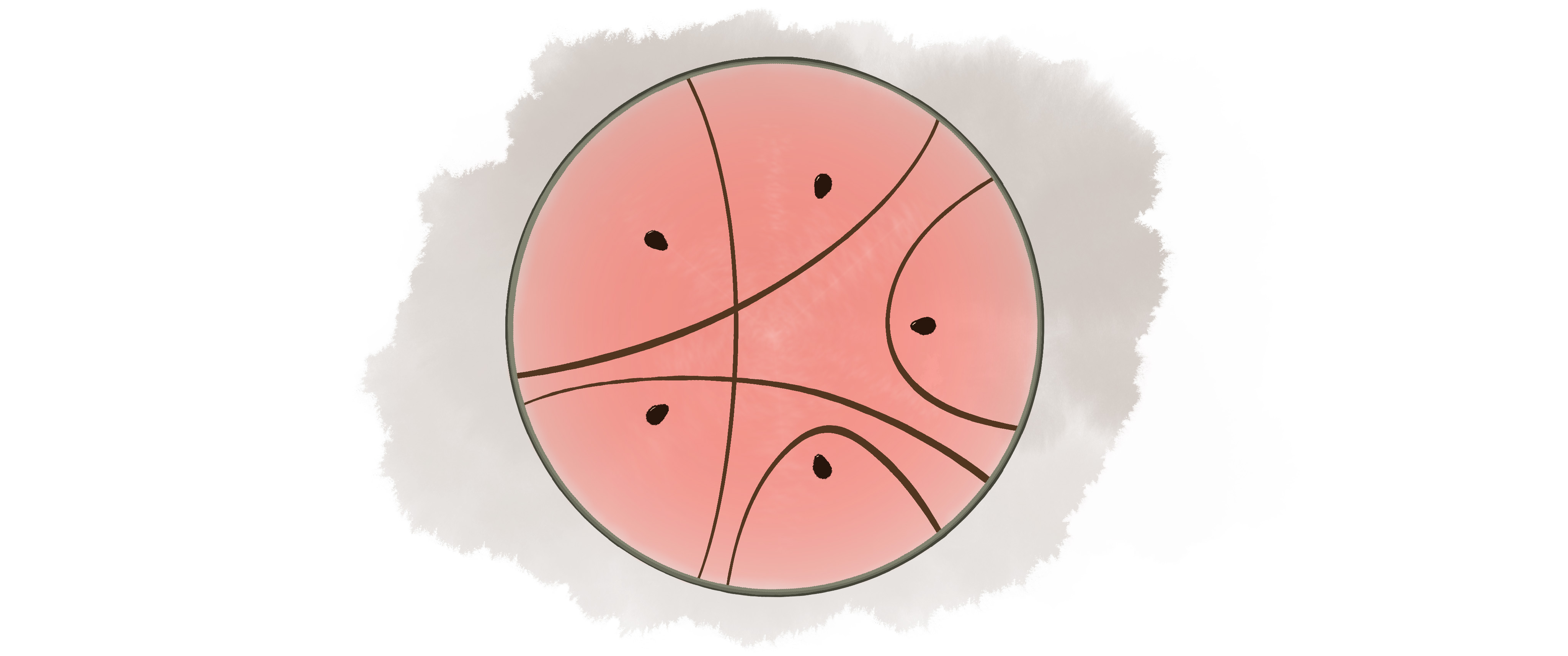}
\caption{An example of a watermelon.}
\label{WatermelonDiagram}
\end{figure}

\begin{rmk}
Regard $D_n$ as a cross-section of a watermelon: we may think of $\partial D_n$ as the rind, and the $n$ punctures as the seeds.
\end{rmk}

\begin{defi}[Watermelon equivalence]
\label{defi: Watermelon equivalence}
Let $A = \{\alpha_i \mid i \in \mathscr{I}\}$ and $A' = \{\alpha'_i \mid i \in \mathscr{I}' \}$ be two watermelons. We say that $A$ and $A'$ are \emph{watermelon equivalent} if there exists an orientation-preserving self-homeomorphism $\varphi$ (possibly permuting punctures) of $D_n$ and a bijection $\psi : \mathscr{I} \rightarrow \mathscr{I}'$ such that $\varphi \circ \alpha_i$ is isotopic to $\alpha'_{\psi(i)}$ for every $i \in \mathscr{I}$.
\end{defi} 

\begin{rmk}
The notion of watermelon equivalence differs from that of equivalence as systems of arcs (\cref{defi: Equivalent systems}): the latter allows for identification which inverse surface oritations.
\end{rmk}

To prove \cref{thm: watermelons and marked loops systems}, we define two maps $f_1: \mathcal{M} \rightarrow \mathcal{W}$ and $f_2: \mathcal{W} \rightarrow \mathcal{M}$, show that $f_1$ and $f_2$ are inverse each other. We now construct the map $f_1 : \mathcal{M} \rightarrow \mathcal{W}$.

\begin{defi}[Associated watermelons]
\label{Associated watermelons}

Let $L = \{ \gamma_0, \cdots, \gamma_{m-1} \}$ be a complete $1$-system of loops on $N_{1,n}   \cong \partial D_n / {\sim}$, consisting of $m \geq 2$ loops. Choose an orientation on $\gamma_0 \in L$, and consider the marked complete $1$-system of loops $(L, \overrightarrow{\gamma_0})$. Then there exists a self-homeomorphism $\varphi$ of $D_n / {\sim}$ such that $\varphi(\overrightarrow{\gamma_0})$ is $\partial D_n / {\sim}$ endowed with the counterclockwise orientation. For convenience, all loops are henceforth understood to be their images under $\varphi$. Since each $\gamma_i$ (for $i \geq 1$) intersects $\gamma_0$ exactly once, the loop $\gamma_i$ can be obtained by identifying the antipodal endpoints of an arc. We denote this arc in $D_n$ by $\alpha^{\dagger}(\gamma_i)$ for $i = 1, \cdots, m-1$, and refer to $\alpha^{\dagger}(\gamma_i)$ as the \emph{cut-open arc} of $\gamma_i$. We respectively choose a representative $\alpha(\gamma_i)$ for each isotopy class $[\alpha^{\dagger}(\gamma_i)]$ so that each pair $\alpha(\gamma_i)$ and $\alpha(\gamma_j)$ intersects transversely and is in minimal position. We refer to each such arc $\alpha(\gamma_i)$ as an \emph{associated arc} of $\gamma_i$. We define the set $f_1\left((L,\overrightarrow{\gamma_0})\right) := \{ \alpha(\gamma_i) \mid i = 1, \cdots, m-1 \} =: A_L$ to be the \emph{associated watermelon} of $L$.

\end{defi}

\begin{rmk}
We prove that, up to watermelon equivalence, the definition of $f_1$ is independent of the choice of $\varphi$ and of the choice of representative of the marked system of loops within its mark equivalence class. It follows that $f_1$ is a well-defined map from $\mathcal{M}$ to $\mathcal{W}$.

Let $(L, \overrightarrow{\gamma_0}) = (\{\gamma_i\}_{i=0}^{l}, \overrightarrow{\gamma_0})$ and $(L', \overrightarrow{\gamma'_0}) = (\{\gamma'_i\}_{i=0}^{l}, \overrightarrow{\gamma'_0})$ be two marked complete $1$-systems of loops that are mark equivalent, and let $(\Phi,\Psi) : (L,\overrightarrow{\gamma_0}) \rightarrow (L',\overrightarrow{\gamma'_0})$ be a mark equivalence between them (see \cref{defi: Mark equivalence} and \cref{defi: Equivalent systems}). Suppose $\varphi$ is a self-homeomorphism of $D_n / {\sim}$ that sends $\overrightarrow{\gamma_0}$ to the $\partial D_n / {\sim}$, endowed with the counterclockwise orientation. Similarly, let $\varphi'$ be a self-homeomorphism of $D_n / {\sim}$ that sends $\overrightarrow{\gamma'_0}$ to $\partial D_n / {\sim}$, endowed with the counterclockwise orientation. Then $m := \varphi' \circ \Phi \circ \varphi^{-1}$ is a self-homeomorphism of $D_n / {\sim}$ that sends $\overrightarrow{\gamma_0}$ to $\overrightarrow{\gamma'_0}$ in an orientation-preserving way. We may thus regard $m$ as an orientation-preserving self-homeomorphism of $D_n$ that restricts to the identity on $\partial D_n$ up to homotopy. Moreover, for each $i = 0, \cdots, l$, $m(\gamma_i)$ is isotopic to $\gamma_{\Psi(i)}$ (as a loop). Consequently, $m(\alpha^\dagger(\gamma_i))$ is isotopic to $\alpha^\dagger(\gamma_{\Psi(i)})$ (as an arc), and thus $m(\alpha(\gamma_i))$ is isotopic to $\alpha(\gamma_{\Psi(i)})$. Therefore, $f_1((L, \overrightarrow{\gamma_0}))$ and $f_1((L', \overrightarrow{\gamma'_0}))$ are watermelon equivalent.
\end{rmk}

\begin{defi}[Associated systems of loops]
\label{Associated systems of loops}
We now construct the inverse map $f_2: \mathcal{W} \rightarrow \mathcal{M}$ for \cref{thm: watermelons and marked loops systems}. We first isotope each arc in $A$ to a smooth representative. Then, we may choose a sufficiently small positive number $\epsilon > 0$ such that the following conditions are satisfied (see the left picture in \cref{AssociatedSystem}):

\begin{itemize}
\item let $R_\epsilon := \{ x \in D_n \mid d(x,\partial D_n) \leq \epsilon \}$ be closed annulus in $D_n$ that contains none of the punctures, here $d$ denotes Euclidean distance;
\item for every $\alpha \in A$, $\alpha \cap (D_n \setminus R_\epsilon)$ is a connected arc;
\item for each pair of arcs $\alpha_1, \alpha_2 \in A$, we have $\alpha_1 \cap \alpha_2 \cap R_\epsilon$ is empty.
\end{itemize}

\begin{figure}[H]
\centering
\includegraphics[width=1.0\textwidth]{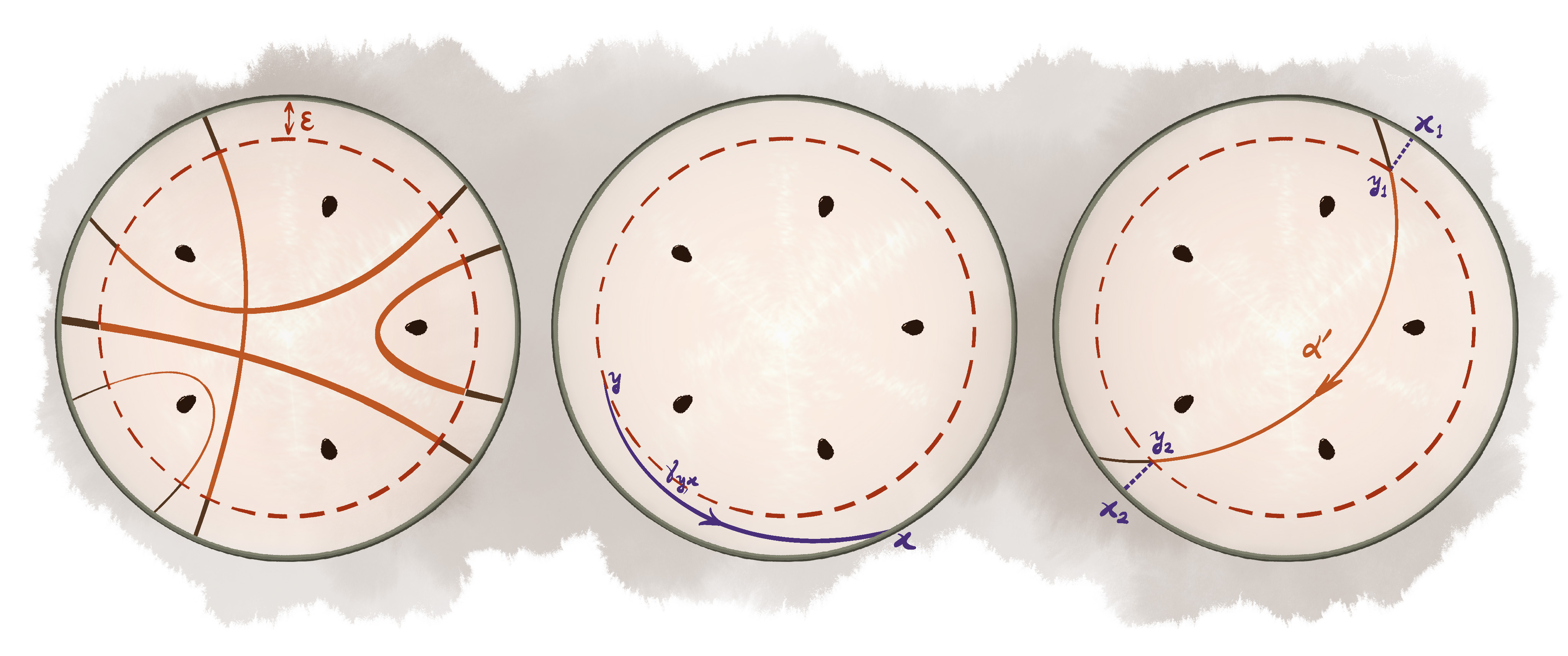}
\caption{Some preparatory work for the construction of the associated system of a watermelon.}
\label{AssociatedSystem}
\end{figure}

We orient $\partial D_n$ in the counterclockwise direction. For a point $y = (1-\epsilon) e^{\theta_y \sqrt{-1}}$ on $C_\epsilon := \{ x \in D_n \mid d(x,\partial D_n) = \epsilon \}$ and a point $x = e^{\theta_x \sqrt{-1}}$ on $\partial D_n$. Without loss of generality, we suppose $0 \leq \theta_x - \theta_y < 2 \pi$. We define
\begin{align}
\alpha_{y,x}:[0,1] & \rightarrow R_\epsilon,  \notag \\
t & \mapsto ((1-t)(1-\epsilon)+t)e^{\left(\theta_y + t (\theta_x-\theta_y)\right) \sqrt{-1}}, \notag
\end{align}
which is an arc in $R_\epsilon$ from $y$ to $x$ that spirals outward in the counterclockwise direction (see the middle picture in \cref{AssociatedSystem}).

Next, we construct the loops in $f_2(A) =: (L_A, \overrightarrow{\gamma_0})$, which we call the \emph{associated marked complete $1$-system of loops} of the watermelon $A$. For each arc $\alpha \in A$ (note that $\alpha$ is an injection from $[0,1]$ into $D_n$), let $y_1$ be the first point where $\alpha$ intersects the circle $C_\epsilon$, and $y_2$ the second point where $\alpha$ intersects $C_\epsilon$. Let $t_1 := \alpha^{-1}(y_1) < t_2 := \alpha^{-1}(y_2)$, and denote by $\alpha' := \alpha|_{[t_1, t_2]}$ the sub-arc between these points. Let $x_i := \dfrac{y_i}{ |y_i| }$ for $i = 1, 2$; these are points on the boundary of $D_n$ (see the right picture in \cref{AssociatedSystem}). We define a loop $\gamma_\epsilon(\alpha)$ in $D_n / {\sim}$, which we typically abbreviate as $\gamma(\alpha)$. This loop, called the \emph{associated loop} of $\alpha$, is defined as follows:
\begin{align}
\label{eq: gamma_alpha}
\gamma(\alpha) =
\begin{cases}
{\alpha_{y_1, -x_2}}^\# * \alpha' * \alpha_{y_2, x_2}, \quad &\text{ if } \theta_{y_1} - \theta_{y_2} \in [0,\pi] \mod 2\pi,\\
{\alpha_{y_2, -x_1}}^\# * {\alpha'}^\# * \alpha_{y_1, x_1}, \quad &\text{ if } \theta_{y_1} - \theta_{y_2} \in (\pi,2\pi) \mod 2\pi\\
\end{cases}
\end{align}
where:
\begin{itemize}
\item the symbol $*$ denotes the concatenation of oriented paths.
\item for any path $\alpha$, the notation $\alpha^\#$ denotes the inverse path (i.e., $\alpha$ traversed in the reverse direction).
\item the modular intervals used in the case distinction are defined by:
\begin{align}
[0, \pi] \mod 2\pi &:= \bigcup_{n \in \mathbb{Z}} [0 + 2n\pi, \pi + 2n\pi],  \notag \\
(\pi, 2\pi) \mod 2\pi &:= \bigcup_{n \in \mathbb{Z}} (\pi + 2n\pi, 2\pi + 2n\pi). \notag
\end{align}
\end{itemize}
See \cref{GammaAlpha} for an illustration.

\begin{figure}[H]
\centering
\includegraphics[width=1.0\textwidth]{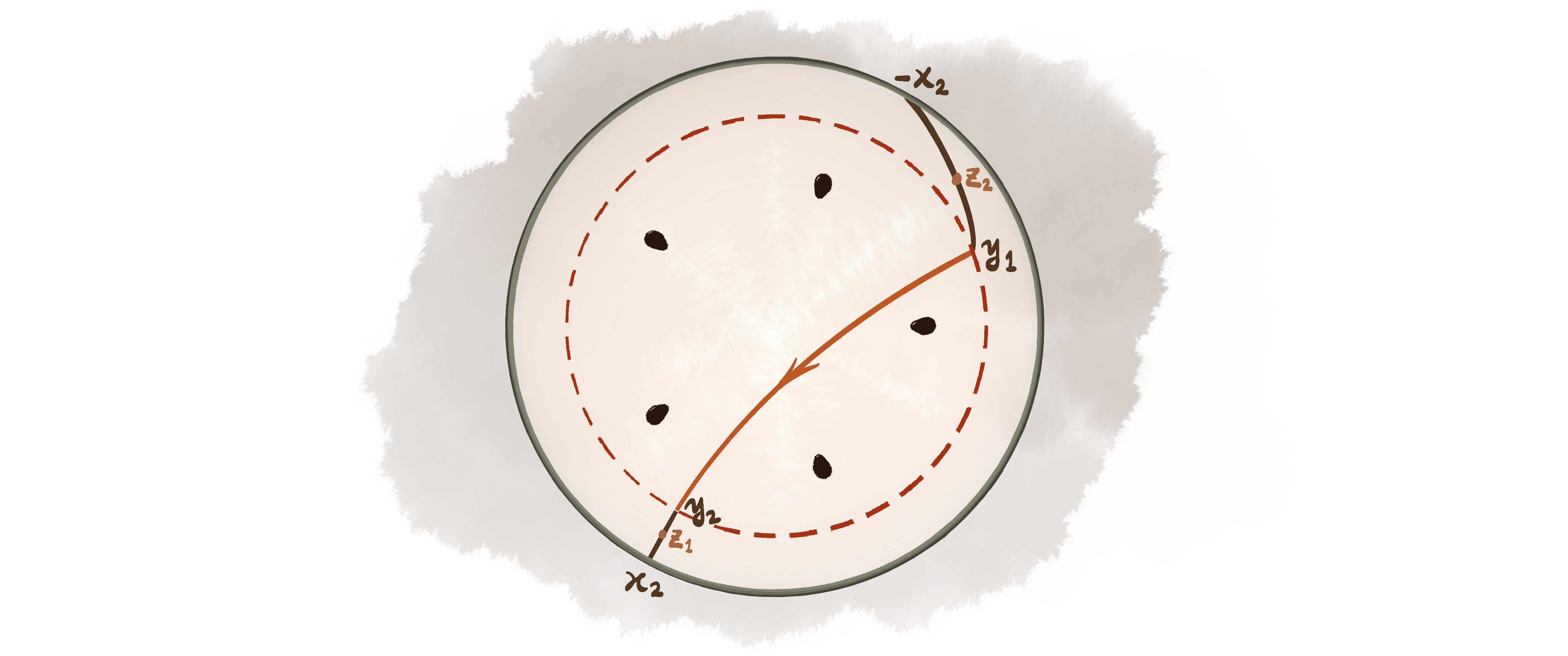}
\caption{The specific construction of $\gamma_\alpha$.}
\label{GammaAlpha}
\end{figure}

We define $L_A := \{\gamma(\alpha) \mid \alpha \in A \} \cup \{\gamma_0\}$, where $\gamma_0 = \partial D_n / {\sim}$, and regard $\overrightarrow{\gamma_0}$, equipped with the counterclockwise orientation, as the mark of $L_A$. Next, we prove that $L_A$ is a complete $1$-system of loops; that is, every loop in $L_A$ is simple, and each pair of loops intersects exactly once.

We already know that $\gamma_0$ is simple and each loop intersects $\gamma_0$ exactly once.

For each $\gamma(\alpha) \in L_{A}$, to prove that $\gamma(\alpha)$ is simple, it suffices to show that ${\alpha_{y_1, -x_2}}^\#$ and $\alpha_{y_2, x_2}$ are disjoint. First, consider the case where $\theta_{y_1} - \theta_{y_2} \in [0, \pi] \mod 2\pi$. Since for every $z_1 \in \alpha_{y_2, x_2}$ we have $\theta_{z_1} = \theta_{y_2}$ (see \cref{GammaAlpha}), to prove $z_2 \neq z_1$ for every $z_2 \in {\alpha_{y_1, -x_2}}^\#$, it suffices to show $\theta_{z_2} \neq \theta_{y_2}$. This follows from the assumptions that $\theta_{y_2} - \theta_{y_1} \in (\pi, 2\pi) \mod 2\pi$ and $\theta_{z_2} - \theta_{y_1} \leq \theta_{-x_2} - \theta_{y_1} \in [0, \pi) \mod 2\pi$. The proof for the case $\theta_{y_1} - \theta_{y_2} \in (\pi, 2\pi) \mod 2\pi$ is similar.

For each pair of loops $\gamma(\alpha_1), \gamma(\alpha_2) \in L_{A}$, we aim to prove that they intersect exactly once. Observe that the cut-open arc $\alpha^\dagger(\gamma(\alpha_1))$ of $\gamma(\alpha_1)$ (see \cref{Associated watermelons}) separates $D_n$ into two connected components. Since $\alpha^\dagger(\gamma(\alpha_2))$ is an arc with endpoints lying in different components, it must intersect $\alpha^\dagger(\gamma(\alpha_1))$ an odd number of times.

We separately consider the number of intersection points of $\gamma(\alpha_1), \gamma(\alpha_2)$ in $R_\epsilon$ and $D_n \setminus R_\epsilon$.

For the number of intersection in $D_n \setminus R_\epsilon$, since $\alpha_1$ and $\alpha_2$ intersect at most once, their restrictions $\alpha'_1 := \alpha_1 \cap (D_n \setminus R_\epsilon)$ and $\alpha'_2 := \alpha_2 \cap (D_n \setminus R_\epsilon)$ also intersect at most once.

For the number of intersections in $R_\epsilon$, we aim to prove that $\gamma(\alpha_1) \cap R_\epsilon$ and $\gamma(\alpha_2) \cap R_\epsilon$ intersect at most once. We proceed by contradiction: if not, the intersection number between $\gamma(\alpha_1) \cap R_\epsilon$ and $\gamma(\alpha_3) \cap R_\epsilon$ is at least two. We write $\gamma(\alpha_1) \cap R_\epsilon = l_1 \cup c_1$ and $\gamma(\alpha_2) \cap R_\epsilon = l_2 \cup c_2$, where $l_i$ denotes the connected component of $\gamma(\alpha_i) \cap R_\epsilon$ that is a straight-line segment, and $c_i$ denotes the spiraling segment. Since $l_1$ and $l_2$ are disjoint, and $c_i$ are linearly increasing functions of the argument with angular increments less than $\pi$, it follows that $c_1$ and $c_2$ cannot intersect twice. Moreover, if $c_1$ intersects $l_2$ (or $c_2$ intersects $l_1$), then $c_1$ and $c_2$ do not intersect. Since $\gamma(\alpha_1) \cap R_\epsilon$ and $\gamma(\alpha_2) \cap R_\epsilon$ intersect at least twice, we have $l_1$ and $c_2$ intersecting (once) and $l_2$ and $c_1$ intersecting (once), which leads to a contradiction that both the starting and ending points of $\alpha(\gamma(\alpha_2))$ lie on the same side of $\alpha(\gamma(\alpha_1))$ (see \cref{ExactlyOnce}).

\begin{figure}[H]
\centering
\includegraphics[width=1.0\textwidth]{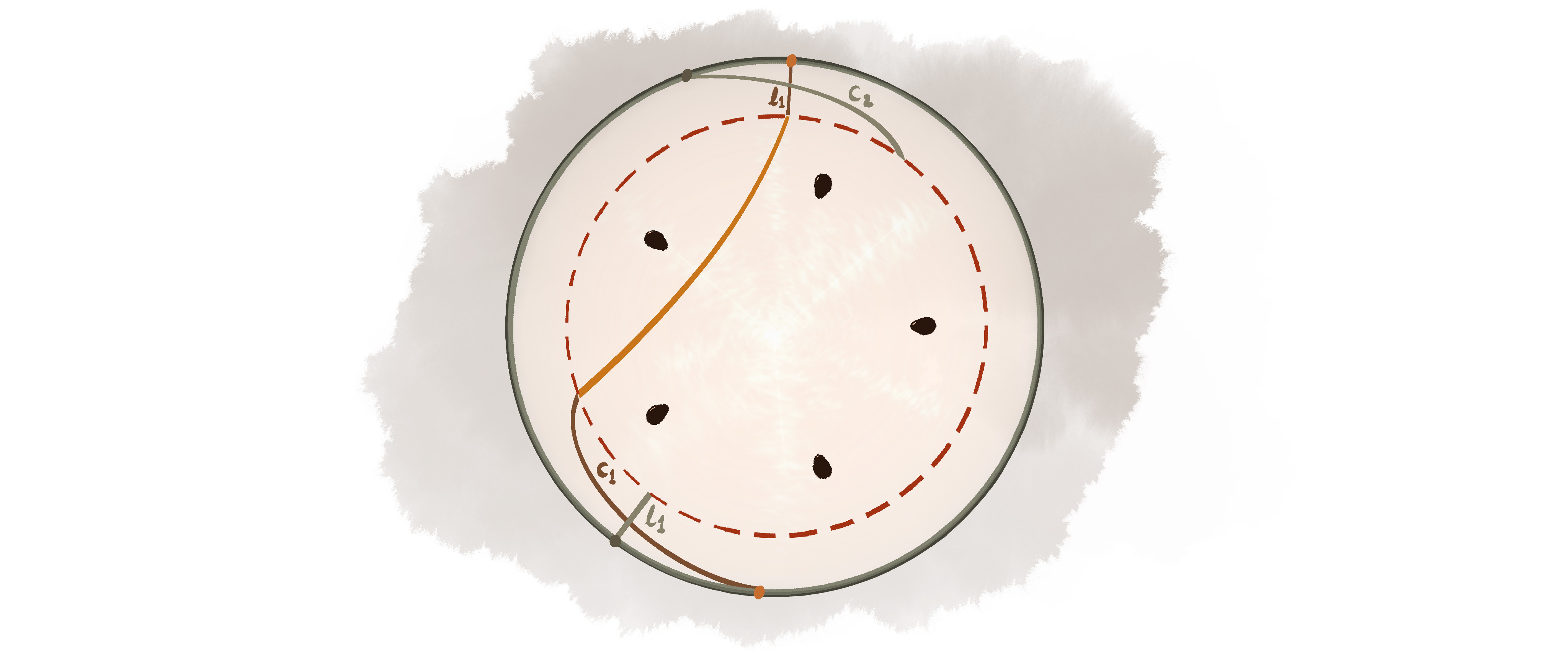}
\caption{The elements in $L_A$ intersect pairwise exactly once.}
\label{ExactlyOnce}
\end{figure}

Hence $i(\gamma(\alpha_1),\gamma(\alpha_2))$ is at most two. Combining the fact that they intersect an odd number times, the intersection number is exactly one, as desired.

\end{defi}

\begin{rmk}
We prove that the definition of $f_2$ does not depend on the choice of $\epsilon$ satisfying the required conditions (and the variations in the definition of $\gamma_\epsilon(\alpha) = \gamma(\alpha)$ in \cref{eq: gamma_alpha} arising from different choices of $\epsilon$ do not affect the outcome), nor on the choice of representative of the watermelon within its equivalence class. It follows that $f_2$ is a well-defined map from $\mathcal{W}$ to $\mathcal{M}$.

Let $A = \{\alpha_i\}_{i=0}^{l}$ and $A' = \{\alpha'_i\}_{i=0}^{l}$ be two watermelon-equivalent watermelons, and let $(\Phi,\Psi) : A \rightarrow A'$ be a watermelon equivalence between them. Intuitively, the loop $\gamma(\alpha)$ is formed by moving the endpoints of $\alpha$ to antipodal points and identifying them. Hence, $\alpha$ and the cut-open arc $\alpha^\dagger(\gamma(\alpha))$ are isotopic as arcs. Since $\Phi(\alpha_i)$ and $\alpha'_{\Psi(i)}$ are isotopic (as arcs), it follows that $\Phi(\alpha^\dagger(\gamma(\alpha_i)))$ and $\alpha^\dagger(\gamma(\alpha'_{\Psi(i)}))$ are isotopic as well.

We may choose a sufficiently small $\epsilon > 0$ so that the construction in \cref{eq: gamma_alpha} can be applied uniformly to each arc arising as a time slice in the isotopy. Then $\Phi(\gamma(\alpha_i))$ and $\gamma(\alpha'_{\Psi(i)})$ are isotopic as loops for each $i = 0 \cdots, l$. Therefore, the equivalence $(\Phi, \Psi)$ induces a mark equivalence between $f_2(A)$ and $f_2(A')$, identifying $\gamma_0$ with $\gamma'_0$.
\end{rmk}

\begin{rmk}
The mutual invertibility of $f_1$ and $f_2$ follows from their construction.

On the one hand, consider the composition $f_2 \circ f_1 : \mathcal{M} \rightarrow \mathcal{M}$ sends each loop $\gamma$ in a marked complete $1$-system of loops $(L, \overrightarrow{\gamma_0}) = (\{\gamma_i\}_{i=0}^l, \overrightarrow{\gamma_0})$ to $\gamma(\alpha(\gamma))$. Since $\alpha^\dagger(\gamma)$ and $\alpha^\dagger(\gamma(\alpha(\gamma)))$ are isotopic (as arcs), we may again choose a sufficiently small $\epsilon > 0$ so that the construction in \cref{eq: gamma_alpha} can be applied uniformly to each arc arising as a time slice in the isotopy. It then follows that $\gamma$ and $\gamma(\alpha(\gamma))$ are isotopic (as loops), and hence $f_2 \circ f_1$ acts as the identity on $\mathcal{M}$.

On the other hand, the composition $f_1 \circ f_2 : \mathcal{W} \rightarrow \mathcal{W}$ sends each arc $\alpha$ in a watermelon $A$ to $\alpha(\gamma(\alpha))$. Since $\alpha$ is isotopic to $\alpha^\dagger(\gamma(\alpha))$ (as an arc), and $\alpha^\dagger(\gamma(\alpha))$ is isotopic to $\alpha(\gamma(\alpha))$, it follows that $\alpha$ and $\alpha(\gamma(\alpha))$ are isotopic. We thus conclude that $f_1 \circ f_2$ is the identity map on $\mathcal{W}$.

This completes the proof of \cref{thm: watermelons and marked loops systems}.
\end{rmk}

\begin{cor}
\label{MaxWDisMaxLoops}
If a watermelon $A$ is maximal, then $L_A$ is maximal. Conversely, if a (marked) complete $1$-system of loops $L$ on $D_n / {\sim}$ is maximal, then $A_L$ is maximal as well.
\end{cor}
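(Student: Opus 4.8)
The plan is to deduce both implications formally from the bijection $f_1 \colon \mathcal{M} \to \mathcal{W}$ and $f_2 \colon \mathcal{W} \to \mathcal{M}$ of \cref{thm: watermelons and marked loops systems}, after upgrading that bijection to a statement about inclusions. First I would record the cardinality bookkeeping: a watermelon $A$ with $\#A = k$ arcs is sent by $f_2$ to $L_A = \{\gamma(\alpha) \mid \alpha \in A\} \cup \{\gamma_0\}$, a complete $1$-system of $k+1$ loops, while a marked complete $1$-system $(L,\overrightarrow{\gamma_0})$ with $\#L = k+1$ is sent by $f_1$ to a watermelon $A_L$ with $k$ arcs. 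Thus the correspondence shifts cardinality by exactly one, so that a strict containment on one side forces a strict containment on the other as soon as we know the maps respect inclusion at all.

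The heart of the argument is therefore a monotonicity lemma, which I would isolate and prove separately: both constructions are local to the individual curves. If $(\tilde L, \overrightarrow{\gamma_0}) \subseteq (L, \overrightarrow{\gamma_0})$ are marked complete $1$-systems sharing the mark $\gamma_0$, then, using the same homeomorphism $\varphi$ and the same minimal-position representatives as in \cref{Associated watermelons}, one gets $A_{\tilde L} \subseteq A_L$: dropping loops simply drops their associated arcs, and the remaining arcs are still pairwise minimal and hence a legitimate watermelon. Symmetrically, for watermelons $A' \subseteq A$, choosing a single $\epsilon$ small enough for all of $A$ (hence for $A'$) in \cref{Associated systems of loops} yields $L_{A'} = \{\gamma(\alpha) \mid \alpha \in A'\} \cup \{\gamma_0\} \subseteq L_A$, since each $\gamma(\alpha)$ depends only on the single arc $\alpha$ together with $\epsilon$. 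So both $f_1$ and $f_2$ are inclusion-preserving once the mark is fixed.

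With monotonicity established, each implication follows by contradiction using $f_1 \circ f_2 = \mathrm{id}_{\mathcal{W}}$ and $f_2 \circ f_1 = \mathrm{id}_{\mathcal{M}}$. For the first claim, suppose $A$ is maximal but $L_A$ is not; then there is a complete $1$-system $L' \supsetneq L_A$. Since $\gamma_0 \in L_A \subseteq L'$, I may mark $L'$ by $\gamma_0$ and set $A' := A_{L'} = f_1(L',\overrightarrow{\gamma_0})$. Monotonicity gives $A = A_{L_A} \subseteq A'$, and the cardinality shift makes this strict, contradicting maximality of $A$; if one prefers maximality among all $1$-systems rather than complete ones, \cref{lem: max to complete} lets me replace $L'$ by a maximal, hence complete, $1$-system still strictly containing $L_A$. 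For the converse, suppose $(L,\overrightarrow{\gamma_0})$ is maximal but $A_L$ is not; then some watermelon $A' \supsetneq A_L$ exists, and monotonicity of $f_2$ yields $L = L_{A_L} \subseteq L_{A'}$ strictly, with $L_{A'}$ a complete $1$-system of loops, contradicting maximality of $L$.

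The only genuinely delicate step is the monotonicity lemma. Within it, the substantive verifications are that passing to a subcollection of arcs preserves pairwise minimal position (so that removing loops removes precisely their associated arcs, leaving a valid watermelon), and that one $\epsilon$ can be chosen to work simultaneously for every arc of the larger watermelon. These are exactly the facts that make the constructions of \cref{Associated watermelons} and \cref{Associated systems of loops} respect inclusion rather than merely cardinality; everything else is formal manipulation of the bijection already furnished by \cref{thm: watermelons and marked loops systems}.
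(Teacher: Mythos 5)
Your proof is correct and follows the paper's intended route: the paper states this corollary without any proof, as an immediate consequence of the correspondence in \cref{thm: watermelons and marked loops systems}, and your argument is exactly that deduction made explicit. The monotonicity step you isolate (that $f_1$ and $f_2$ preserve inclusions once the mark, the minimal-position representatives, and $\epsilon$ are fixed), combined with the cardinality shift $\#L_A = \#A + 1$ and invariance of maximality under equivalence, is precisely what the paper is implicitly invoking.
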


\subsection{Standard maximal watermelons}

\begin{pro}
\label{arc = punctures partition}
Let $\mathcal{P}$ be the set of punctures of $D_n$ ($n \geq 2$) and $A$ be a watermelon on $D_n$. Define the map
\begin{equation}
\operatorname{Par} : \left\{[\alpha] \mid \alpha \in A \right\} \rightarrow \left\{ \{\mathcal{P}_1,\mathcal{P}_2\} \mid \mathcal{P}_1 \sqcup \mathcal{P}_2 = \mathcal{P}, \mathcal{P}_i \neq \emptyset ,i=1,2 \right\}, \notag
\end{equation}
which maps from the set of isotopy classes of arcs in $A$ to the set of nontrivial bipartition of $\mathcal{P}$ as follows: any arc $\alpha \in A$ separates $D_n$ into two components $F_{\alpha,1}$ and $F_{\alpha,2}$, let $\mathcal{P}_{\alpha,i}$ be the punctures in $F_{\alpha,i}$, and define $\operatorname{Par}(\alpha) := \{ \mathcal{P}_{\alpha,1},\mathcal{P}_{\alpha,2} \}$. Then $\operatorname{Par}$ is well-defined and injective.
\end{pro}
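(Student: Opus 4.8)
The plan is to establish the claim in three stages: that $\operatorname{Par}$ lands in the set of \emph{nontrivial} bipartitions, that it is independent of the chosen representative of an isotopy class, and finally that it is injective. For the first point, an embedded arc $\alpha$ with both endpoints on $\partial D_n$ separates $D_n$ into exactly two components by the Jordan arc theorem (concatenate $\alpha$ with either sub-arc of $\partial D_n$ to obtain a Jordan curve in the plane). If one component, say $F_{\alpha,1}$, contained no puncture, it would be an unpunctured disk bounded by $\alpha$ together with a sub-arc of $\partial D_n$, so $\alpha$ could be isotoped across it into $\partial D_n$, contradicting the essentiality demanded of arcs in a system (\cref{defi: Systems of curves}). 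Hence both $\mathcal{P}_{\alpha,1}$ and $\mathcal{P}_{\alpha,2}$ are nonempty, so $\operatorname{Par}(\alpha)$ is a genuine nontrivial bipartition. For representative-independence I would invoke the isotopy extension theorem: an isotopy between arcs $\alpha$ and $\alpha'$ extends to an ambient isotopy of $D_n$ fixing the punctures, which carries the two components of $D_n\setminus\alpha$ to those of $D_n\setminus\alpha'$ and therefore preserves the induced partition.

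For injectivity, suppose $\alpha,\beta\in A$ satisfy $\operatorname{Par}(\alpha)=\operatorname{Par}(\beta)=\{\mathcal{P}_1,\mathcal{P}_2\}$. Since $A$ is a $1$-system, $\alpha$ and $\beta$ are in minimal position with $i(\alpha,\beta)\in\{0,1\}$, and I would treat the two values separately. If $i(\alpha,\beta)=1$, the union $\alpha\cup\beta$ is an embedded ``$X$'' whose four endpoints alternate around $\partial D_n$, cutting $D_n$ into four triangular regions $Q_1,Q_2,Q_3,Q_4$ (cyclically ordered), each having the crossing point as a vertex and a sub-arc of $\partial D_n$ as one side. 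Writing $a,b,c,d$ for the puncture sets of the $Q_i$, one checks that $\alpha$ induces $\{a\cup b,\,c\cup d\}$ while $\beta$ induces $\{a\cup d,\,b\cup c\}$; a short comparison using that $a,b,c,d$ are pairwise disjoint shows that equality of these \emph{unordered} pairs forces either $a=c=\emptyset$ or $b=d=\emptyset$. In either case a region $Q_i$ adjacent to the crossing is puncture-free, which is a boundary half-bigon: sliding the relevant endpoint along $\partial D_n$ across $Q_i$ cancels the intersection, contradicting minimal position. Hence $i(\alpha,\beta)=1$ cannot occur.

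It then remains to treat $i(\alpha,\beta)=0$. Here $\beta$ lies in the closure of a single component of $D_n\setminus\alpha$, so $D_n\setminus(\alpha\cup\beta)$ has three components: the far side $N_\alpha$ of $\alpha$, the far side $N_\beta$ of $\beta$, and a middle region $M$ bounded by $\alpha$, $\beta$, and two sub-arcs of $\partial D_n$. Expressing both partitions in terms of $\mathcal{P}(N_\alpha),\mathcal{P}(M),\mathcal{P}(N_\beta)$ and comparing as before, the equality $\operatorname{Par}(\alpha)=\operatorname{Par}(\beta)$ forces $\mathcal{P}(M)=\emptyset$. Then $M$ is an unpunctured disk providing an isotopy from $\alpha$ to $\beta$, so $[\alpha]=[\beta]$, which is exactly injectivity.

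The hard part will be the case $i(\alpha,\beta)=1$: the crux is observing that equality of the unordered bipartitions is rigid enough to empty an opposite pair of quadrants, and then recognizing such an empty quadrant as a boundary half-bigon whose removal violates minimal position. The other ingredients—Jordan-arc separation, essentiality ruling out an empty side, and the disjoint-case bookkeeping—are routine, though I would take care to handle the unordered nature of the bipartition uniformly and to justify cleanly that a puncture-free corner genuinely admits an intersection-reducing isotopy of $\beta$ with endpoints sliding along $\partial D_n$.
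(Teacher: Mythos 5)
Your proposal is correct and follows essentially the same route as the paper: well-definedness via invariance of the partition under isotopy, then injectivity by splitting into the cases $i(\alpha,\beta)=0$ and $i(\alpha,\beta)=1$ and showing that equal partitions force puncture-free regions (the middle strip, respectively an opposite pair of quadrants forming half-bigons). The only cosmetic difference is that in the intersecting case the paper concludes directly that the two arcs are isotopic via the two half-bigons, while you phrase it as a contradiction with minimal position; both readings yield the same injectivity conclusion.
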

\begin{proof}
We have that $\operatorname{Par}$ is well-defined up to isotopy of $\alpha$, since the partition will not change when we move one arc to another continuously. Now we prove the $\operatorname{Par}$ is injective as follows. Let $\alpha,\beta$ be two arcs in $A$ such that $\operatorname{Par}([\alpha]) = \operatorname{Par}([\beta])$. Since $A$ is a $1$-system of arcs, the intersection number between $\alpha$ and $\beta$ is $1$ or $0$. If $\alpha$ and $\beta$ are disjoint, they separate $D_n$ into three components, where the only component adjacent to both of the other two contains no punctures. Thus, $\alpha$ and $\beta$ are isotopic. If $\alpha$ and $\beta$ intersect exactly once, they separate $D_n$ into four components, and one pair of opposite components contains no punctures. Hence, they form two half-bigons, which again implies that $\alpha$ and $\beta$ are isotopic.
\end{proof}

\begin{defi}[Short arcs]
\label{Short arcs}
Let $A$ be a watermelon on $D_n$. We denote the collection of all punctures of $D_n$ by $\mathcal{P}$. We say an arc $\alpha$ in $A$ is a \emph{short arc} if one of the sets $\mathcal{P}_{\alpha,1}$ and $\mathcal{P}_{\alpha,2}$ (defined in \cref{arc = punctures partition}) contains exactly one puncture, and the other contains $n-1$ punctures. When $n \geq 3$, only one of the the sets $\mathcal{P}_{\alpha,1}$ and $\mathcal{P}_{\alpha,2}$ consists of a single puncture, and we refer to this puncture as the \emph{puncture isolated by $\alpha$}.
\end{defi}

\begin{lem}
\label{short arcs are disjoint to any others}
Let $A$ be a watermelon on $D_n$ ($n \geq 2$) and let $\alpha \in A$ be a short arc. Then $\alpha$ is disjoint from every other arc in $A$.
\end{lem}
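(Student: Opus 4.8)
The plan is to argue by contradiction and extract a half-bigon between $\alpha$ and any arc it meets, contradicting the minimal-position requirement built into the definition of a watermelon (see \cref{defi: Systems of curves}). Write $\mathcal{P}$ for the punctures and let $p$ be the puncture isolated by $\alpha$, so that in the notation of \cref{arc = punctures partition} the component $F_{\alpha,1}$ of $D_n \setminus \alpha$ contains only $p$. The two endpoints of $\alpha$ cut $\partial D_n$ into boundary arcs $b_1$ and $b_2$, and each $\overline{F_{\alpha,i}}$ is a closed disk with boundary $\alpha \cup b_i$; the disk $\overline{F_{\alpha,1}}$ carries the single marked point $p$ in its interior.

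Now suppose, for contradiction, that some $\beta \in A$ with $\beta \neq \alpha$ satisfies $i(\alpha,\beta) \neq 0$. Because $A$ is a $1$-system we have $i(\alpha,\beta) = 1$, and since the arcs of $A$ are pairwise in minimal position, $\alpha$ and $\beta$ meet transversely in a single point $x$ lying in the interior of $\alpha$. First I would locate the relevant sub-arc of $\beta$: since $\beta$ can cross $\partial \overline{F_{\alpha,1}} = \alpha \cup b_1$ only along $\alpha$ (its interior is disjoint from $\partial D_n$) and does so exactly once, the two endpoints of $\beta$ lie on opposite boundary arcs, and $\beta \cap \overline{F_{\alpha,1}}$ is a single sub-arc $\beta_1$ running from a point $q \in b_1$ to the crossing point $x \in \alpha$.

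The heart of the argument is a separation count inside $\overline{F_{\alpha,1}}$. The properly embedded arc $\beta_1$ cuts the disk $\overline{F_{\alpha,1}}$ into two sub-disks; as $p$ is the only puncture of $\overline{F_{\alpha,1}}$ and lies in the interior of exactly one of them, the other sub-disk $\Delta$ contains no puncture. By construction $\partial \Delta$ consists of $\beta_1$, a sub-arc of $\alpha$ from $x$ to one endpoint of $\alpha$, and a sub-arc of $b_1 \subset \partial D_n$; thus $\Delta$ is an unpunctured half-bigon bounded by $\alpha$, $\beta$, and $\partial D_n$.

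Finally I would invoke the (boundary version of the) bigon criterion: an unpunctured half-bigon may be removed by an isotopy of $\beta$ supported near $\Delta$ — concretely, by sliding the endpoint $q$ of $\beta$ along $\partial D_n$ across the endpoint of $\alpha$, dragging $\beta_1$ over $\Delta$ and off $\alpha$ — which lowers the geometric intersection number and yields a representative with $i([\alpha],[\beta]) = 0$. This contradicts minimal position, under which $i(\alpha,\beta) = i([\alpha],[\beta]) = 1$. Hence no such $\beta$ exists and $\alpha$ is disjoint from every other arc in $A$. The only step requiring care is the last one: one must check that $\Delta$ is genuinely embedded and that its being puncture-free makes the reducing isotopy unobstructed. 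This is precisely the arc-and-boundary form of the standard bigon criterion, and the case $n = 2$ (where $F_{\alpha,2}$ is also once-punctured) is covered verbatim, since the argument uses only the single puncture lying in $F_{\alpha,1}$.
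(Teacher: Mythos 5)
Your proof is correct and follows essentially the same route as the paper: both arguments intersect $\beta$ with the once-punctured disk $F_{\alpha,1}$, observe that the resulting sub-arc cuts it into two pieces of which one is an unpunctured half-bigon bounded by $\alpha$, $\beta$, and $\partial D_n$, and then contradict the minimal-position requirement in the definition of a watermelon. Your version merely makes explicit the details the paper leaves implicit (that $\beta \cap \overline{F_{\alpha,1}}$ is a single sub-arc, and the boundary-sliding isotopy that removes the half-bigon).
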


\begin{proof}

If there is another arc $\beta$ in $A$ such that the intersection number between $\alpha$ and $\beta$ is exactly $1$, then $\beta$ separates $F_{\alpha,1}$ (defined in \cref{arc = punctures partition}), which contains only one puncture, into two components. Hence, one of these components contains no punctures and forms a half-bigon (see \cref{HalfBigon}). This contradicts $\alpha$ and $\beta$ being in minimal position.

\begin{figure}[H]
\centering
\includegraphics[width=1.0\textwidth]{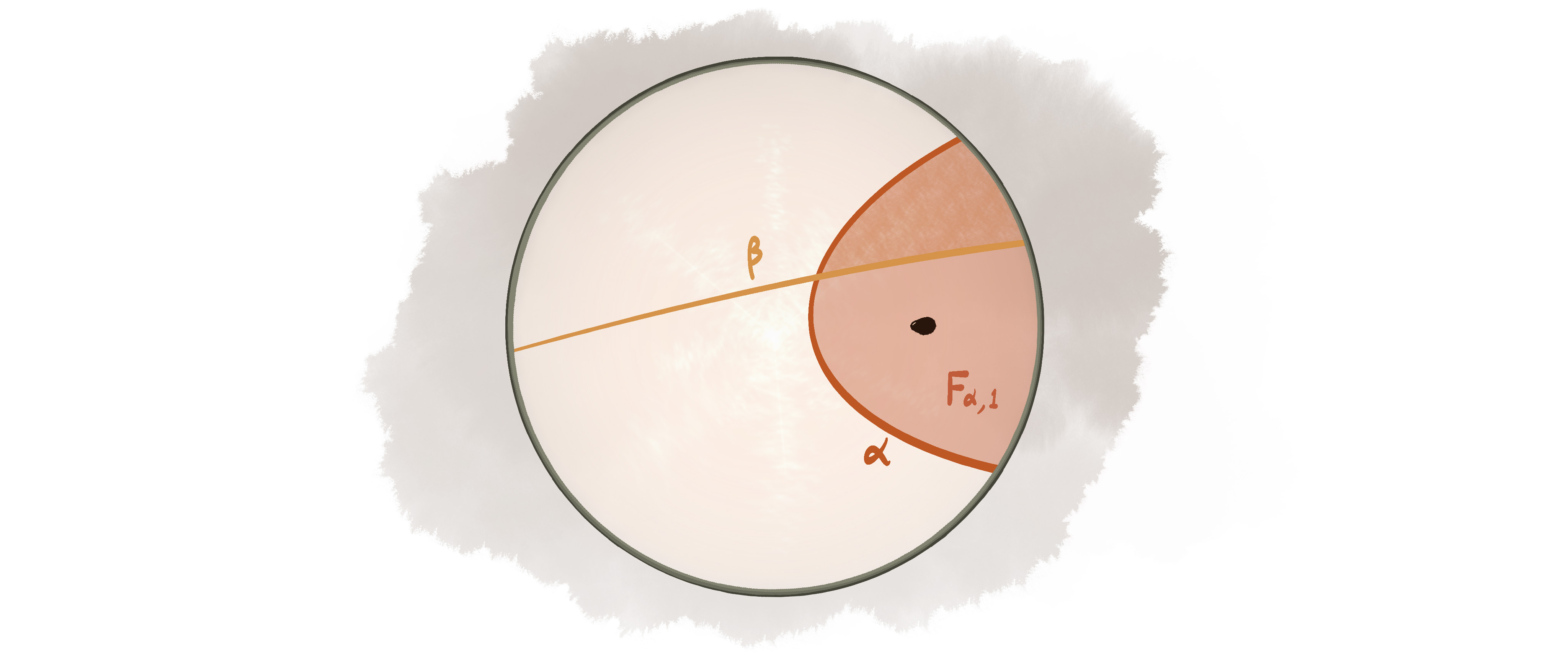}
\caption{If an arc intersects with $\alpha$, they will form a half-bigon (the darkest region in the diagram), which contradicts the minimal position property.}
\label{HalfBigon}
\end{figure}
\end{proof}

\begin{defi}[Standard maximal watermelon]
\label{defi: Standard maximal watermelon}
A maximal watermelon $A$ on $D_n$ is called \emph{standard} if $A$ contains exactly $n$ short arcs. We say $L_A$ is a \emph{standard maximal complete $1$-system of loops}.
\end{defi}

\subsection{Existence and uniqueness of maximal watermelon with n short arcs}

To construct a standard maximal watermelon, we first determine the number of arcs in a standard maximal watermelon.

\begin{pro}
\label{cardinality of maximal watermelon}
For any $n \geq 2$, the cardinality of a maximal watermelon on $D_n$ is $\tfrac{1}{2} n(n-1)$.
\end{pro}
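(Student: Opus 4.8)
The plan is to transport the question to the loop side, where the extremal cardinality has already been pinned down, rather than counting arcs directly on $D_n$. The key arithmetic identity to exploit is built into the correspondence of \cref{thm: watermelons and marked loops systems}: the map $f_2$ of \cref{Associated systems of loops} sends a watermelon $A$ with $k$ arcs to the complete $1$-system of loops $L_A = \{\gamma(\alpha)\mid\alpha\in A\}\cup\{\gamma_0\}$, where the extra loop is precisely the mark $\gamma_0=\partial D_n/{\sim}$ and the assignment $\alpha\mapsto\gamma(\alpha)$ is injective (since $f_1,f_2$ are mutually inverse bijections on the two sides). Hence $\#L_A = \#A + 1$ on the nose, and the whole proposition reduces to knowing $\#L_A$ for a maximal $A$.

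First I would invoke \cref{MaxWDisMaxLoops} in the direction ``watermelon-maximal $\Rightarrow$ loop-maximal'': if $A$ is a maximal watermelon on $D_n$, then $L_A$ is a maximal complete $1$-system of loops on $N_{1,n}\cong D_n/{\sim}$. Next I would feed this into the count already performed for loops: by \cref{lem: max to complete} every maximal $1$-system of loops is complete, and the proof of \cref{lem: max no 2-sided} shows that every such maximal system on $N_{1,n}$ has exactly $\tfrac12 n^2-\tfrac12 n+1$ elements (this is where the imported value $\|\widehat{\mathscr{L}}(N_{1,n},1)\|_\infty = \tfrac12 n^2-\tfrac12 n+1$ does the work). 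Combining, $\#L_A = \tfrac12 n^2-\tfrac12 n+1$, so $\#A = \#L_A - 1 = \tfrac12 n(n-1)$, which is the claim. As sanity checks, $n=2$ forces a single arc and $n=3$ forces three (short) arcs, both matching $\binom{n}{2}$.

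The step I expect to require genuine care is the chain ``maximal by inclusion $\Rightarrow$ extremal cardinality,'' i.e.\ ensuring there is no gap between $A$ being inclusion-maximal and $L_A$ attaining the top count. This is not automatic on an arbitrary surface, but for $N_{1,n}$ it is exactly the content of \cref{lem: max no 2-sided}: its proof shows inclusion-maximality already forces $\#L$ up to $\tfrac12 n^2-\tfrac12 n+1$ (via $\#L=\|\mathscr{L}(N_{1,n},1)\|_\infty$), while the separating-loop analysis forces it back down, squeezing equality. I would therefore state explicitly that I am using the inclusion-maximal form of the loop cardinality, and that only the one direction of \cref{MaxWDisMaxLoops} is needed.

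As a safeguard independent of the loop correspondence, I would record the direct route and flag where its real difficulty lies. By \cref{arc = punctures partition} the arcs of $A$ inject into the nontrivial bipartitions of the $n$ punctures, and a cell-decomposition/Euler-characteristic count of $D_n$ cut along $A$ (with at most $\binom{\#A}{2}$ interior crossings) together with a maximality argument should re-derive $\#A=\tfrac12 n(n-1)$. The hard part there would be bounding the crossing number and controlling which complementary regions may carry punctures, i.e.\ precisely the bookkeeping that routing through $L_A$ lets us sidestep; so I would only fall back on this if the correspondence machinery were unavailable.
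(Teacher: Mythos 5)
Your proposal is correct and takes essentially the same route as the paper: the paper's proof is precisely the one-line combination of \cref{MaxWDisMaxLoops} with the known cardinality $\tfrac{1}{2}n(n-1)+1$ of maximal complete $1$-systems of loops on $N_{1,n}$ (quoted from \cite[Corollary 1.6]{chen2024systems}), subtracting one for the mark $\gamma_0$. Your additional unpacking of the identity $\#L_A = \#A + 1$ and of the ``inclusion-maximal $\Rightarrow$ extremal cardinality'' point via \cref{lem: max no 2-sided} and \cref{lem: max to complete} is just a more explicit rendering of the same citation chain.
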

\begin{proof}
This result follows immediately from  \cite[Corollary 1.6]{chen2024systems} and \cref{MaxWDisMaxLoops}.
\end{proof}

\begin{pro}
\label{Existence of standard maximal watermelon}
For any $n \geq 2$, there is a standard maximal watermelon on $D_n$.
\end{pro}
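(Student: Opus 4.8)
The plan is to produce one explicit watermelon with $\tfrac12 n(n-1)$ arcs of which exactly $n$ are short, and to obtain maximality for free from the cardinality count rather than by verifying non-extendability by hand. Indeed, by \cref{cardinality of maximal watermelon} every maximal watermelon on $D_n$ has exactly $\tfrac12 n(n-1)$ arcs; since every watermelon is contained in a maximal one, no watermelon can have more than $\tfrac12 n(n-1)$ arcs. Hence any $1$-system of arcs that attains this cardinality is automatically maximal, and it remains only to exhibit such a system whose number of short arcs is exactly $n$.

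\textbf{The construction I would use.} Model $D_n$ as the rectangle $[0,n]\times[-1,1]$ with the puncture $P_k$ placed at $(k-\tfrac12,0)$, and mark $n$ basepoints $g_0,\dots,g_{n-1}$ on the bottom edge $\{y=-1\}$ with $g_a$ near $x=a$; note that the last gap, at $x=n$, is deliberately left unused. For each pair $(a,b)$ with $0\le a<b\le n-1$ I take $\alpha_{a,b}$ to be an arch from $g_a$ to $g_b$ rising high enough into the rectangle to enclose beneath it exactly the punctures $P_{a+1},\dots,P_b$. When several arcs share a basepoint $g_a$, I spread their endpoints in a small neighbourhood of $x=a$ along the bottom edge, ordered so as to create no superfluous crossings. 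I set $A:=\{\alpha_{a,b}\mid 0\le a<b\le n-1\}$, which has $\binom{n}{2}=\tfrac12 n(n-1)$ arcs.

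\textbf{The verifications I would carry out.} Drawn as arches over a line, $\alpha_{a,b}$ and $\alpha_{c,d}$ meet transversely in a single point when the pairs $\{a,b\}$ and $\{c,d\}$ link (that is $a<c<b<d$ after possibly swapping them) and are disjoint otherwise; in particular $i(\alpha_{a,b},\alpha_{c,d})\le 1$. For linked pairs the induced partitions of $\mathcal P$ cross (the intervals overlap with neither containing the other), so the two arcs cannot be isotoped apart and their single intersection is essential; for non-linked pairs the partitions are nested or disjoint and are realised disjointly. Either way the representatives are in minimal position, so $A$ is a $1$-system. Each $\alpha_{a,b}$ is essential because both $\{P_{a+1},\dots,P_b\}$ and its complement (which always contains $P_n$) are nonempty, and distinct pairs give distinct bipartitions, whence the arcs are pairwise non-isotopic by injectivity of $\operatorname{Par}$ (\cref{arc = punctures partition}). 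Thus $A$ is a maximal watermelon. Finally $\alpha_{a,b}$ is short exactly when $\min(b-a,\,n-(b-a))=1$, that is $b-a=1$ or $b-a=n-1$: the first case produces the $n-1$ arcs $\alpha_{a,a+1}$ isolating $P_1,\dots,P_{n-1}$, and the second produces the single arc $\alpha_{0,n-1}$ isolating $P_n$, for a total of exactly $n$ short arcs. Hence $A$ is standard.

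\textbf{Main obstacle and remarks.} The only genuinely delicate point is the simultaneous realisation of all $\binom{n}{2}$ arches so that the intersection pattern is \emph{precisely} the linking pattern while the shared endpoints at each $g_a$ are resolved without creating extra crossings; this is the standard planar chord-diagram picture, and the cleanest way to pin it down is to fix the endpoint orderings around each $g_a$ explicitly, or equivalently to pass to taut (geodesic) representatives for a complete hyperbolic metric on $D_n$. The output is consistent with \cref{short arcs are disjoint to any others}: the $n$ short arcs found above are indeed pairwise disjoint and disjoint from every other $\alpha_{a,b}$, which is a useful sanity check. The case $n=2$ is degenerate, since there is then a unique arc, which is short, and should be recorded separately.
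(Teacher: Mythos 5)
Your proposal is correct and follows essentially the same route as the paper: build the explicit system of arcs realizing all $\binom{n}{2}$ ``interval'' bipartitions of the punctures, get maximality for free from the cardinality count of \cref{cardinality of maximal watermelon}, and check that exactly $n$ of the arcs (the $n-1$ one-puncture intervals plus the one $(n-1)$-puncture interval) are short. The only substantive difference is the planar model: the paper puts the punctures on a circle and uses straight chords with $n(n-1)$ pairwise-distinct boundary endpoints, so that ``crossing $=$ linked pair'' and minimal position come automatically from convexity, whereas your shared basepoints $g_a$ force exactly the endpoint-ordering bookkeeping you flag as the delicate step (which does admit a consistent resolution, but is what the paper's coordinates are designed to avoid).
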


\begin{proof}
Without loss of generality, let
\begin{equation}
\mathcal{P}=\left\{P_i = \tfrac{1}{2}e^{\frac{2(i-1)\pi \sqrt{-1}}{n}} \middle| i = 1 , \cdots, n \right\} \subset C_{1/2} := \{ z \in \mathbb{C} \mid |z|= 1/2 \} \notag
\end{equation}
be the collection of punctures of $D_n$. For every pair $(i,j)$ such that $1 \leq i < j \leq n$, we denote $z_{i,j} := \tfrac{1}{2} \exp{\left( \left( \frac{j-2}{n} + \frac{j-i}{n^2}\right) 2 \pi \sqrt{-1}\right)}$ and $w_{i,j} := \frac{1}{2} \exp{\left( \left( \frac{i-2}{n} + \frac{n-(j-i)}{n^2}\right) 2 \pi \sqrt{-1}\right)}$. We define the arcs $\alpha_{i,j}$ by
\begin{align}
\alpha_{i,j}(t) =
\begin{cases}
(2-3t) z_{i,j} ,& t \in \left[0,\tfrac{1}{3} \right], \\
(2-3t) z_{i,j} + (3t-1) w_{i,j} ,& t \in \left[\tfrac{1}{3},\tfrac{2}{3} \right], \\
(3t-1) w_{i,j} ,& t \in \left[\tfrac{2}{3},1 \right]. \\
\end{cases}
\end{align}
We define the collection of those arcs by $A = \left\{\alpha_{i,j} \mid 1\leq i < j \leq n \right\}$, and later show that $A$ is a standard maximal watermelon. The following picture shows an example when $n=4$, where a standard maximal watermelon has six arcs (\cref{StandardMaxDiagramFourPunctures}).

\begin{figure}[H]
\centering
\includegraphics[width=1.0\textwidth]{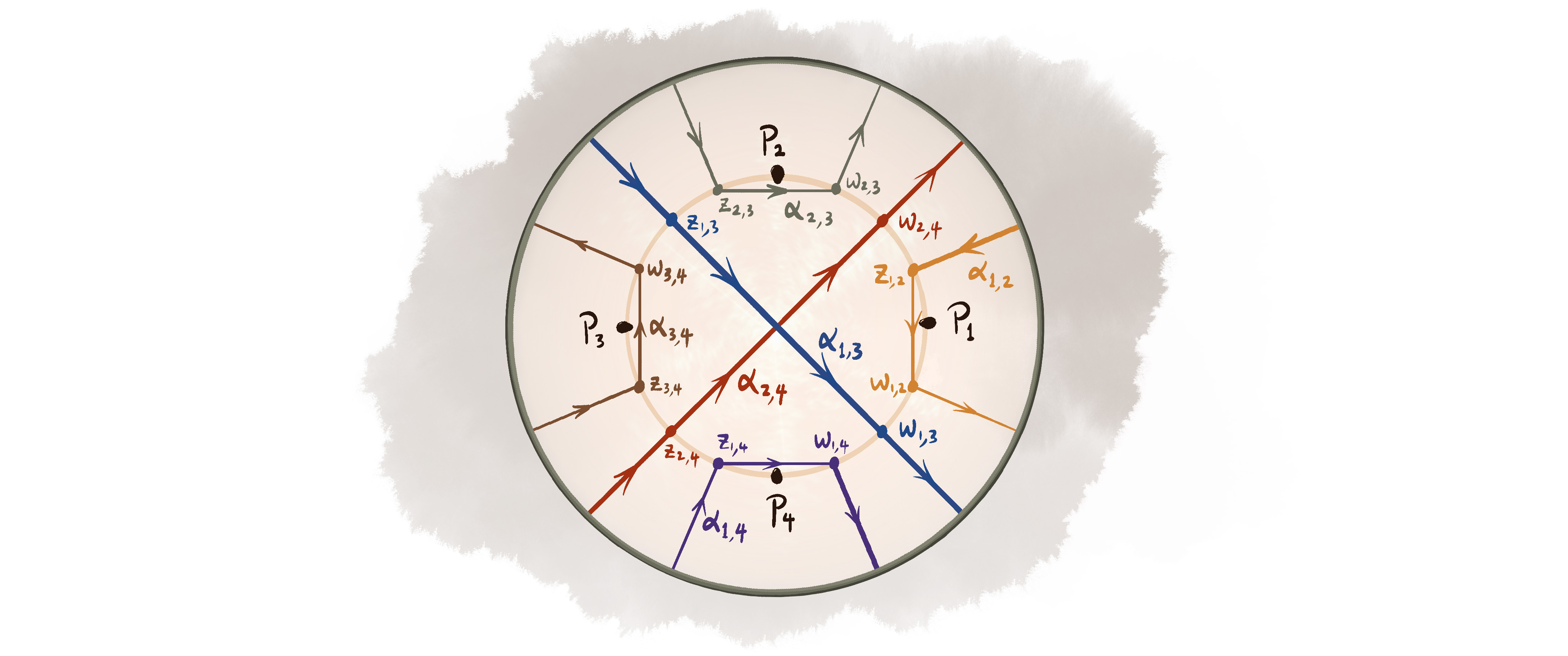}
\caption{A standard maximal watermelon when $n=4$.}
\label{StandardMaxDiagramFourPunctures}
\end{figure}

We first show that $A$ is a watermelon. Since each arc $\alpha_{i,j}$ separates the disk $D_n$ into two components, each containing at least one puncture, it follows that $\alpha_{i,j}$ is essential. All intersection points between arcs in $A$ lie within the open disk $\mathbb{B}_{1/2} := \left\{z \in \mathbb{C} \mid |z|< 1/2 \right\}$. Since the arcs restrict to straight lines in $\mathbb{B}_{1/2}$, the intersection number between any pair of arcs is at most one. Furthermore, since the arcs in $A$ induce distinct partitions of the punctures, they are non-isotopic by the proof of \cref{arc = punctures partition}. Moreover, by construction, for any pair of intersecting arcs, the four segments into which they divide $C_{1/2}$ each contain punctures. Therefore, no pair of arcs in $A$ forms a half-bigon, and hence they are in minimal position. Therefore, $A$ is a watermelon.

The number of arcs in $A$ is the number of ordered pairs $(i,j)$ satisfying $1\leq i < j \leq n$, which is $\tfrac{1}{2} n(n-1)$. There are $n$ short arcs $\left\{\alpha_{i,i+1} \mid 1 \leq i \leq n-1 \right\} \cup \{\alpha_{1,n}\}$. Thus, $A$ is maximal and standard.

\end{proof}

\begin{pro}
\label{watermelon with n short arcs is sub-watermelon of the standard max one}
For any $n \geq 3$, if a watermelon on $D_n$ contains $n$ short arcs, then it is equivalent to a \emph{sub-watermelon}\footnote{A \emph{sub-watermelon} refers to a sub-collection of arcs of some watermelon. Sub-watermelons are evidently watermelons.} of the standard maximal watermelon constructed in \cref{Existence of standard maximal watermelon}.
\end{pro}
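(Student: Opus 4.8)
The plan is to first show that the $n$ short arcs of $A$ already force a standard cyclic configuration, and then that every remaining arc is pinned down by the partition it induces. I begin with the short arcs. By the injectivity of $\operatorname{Par}$ (\cref{arc = punctures partition}), the $n$ short arcs isolate $n$ pairwise distinct punctures; since $D_n$ has exactly $n$ punctures, every puncture is isolated by exactly one short arc. By \cref{short arcs are disjoint to any others} the short arcs are pairwise disjoint, and indeed disjoint from every other arc of $A$. Their ``small sides''---the once-punctured lobes they cut off---cannot be nested: if one lobe were contained in another, the inner puncture would also lie in the outer lobe, contradicting that the outer lobe contains a single, different puncture. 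Hence the $n$ lobes are pairwise disjoint disks attached to $\partial D_n$ along disjoint boundary intervals, and reading these intervals around $\partial D_n$ equips the punctures with a cyclic order.

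Next I would normalize this configuration. Any two systems of $n$ disjoint boundary-parallel once-punctured lobes arranged in a prescribed cyclic order are interchanged by an orientation-preserving self-homeomorphism of the disk (the change-of-coordinates principle for arc systems), and watermelon equivalence permits permuting the punctures to match labels. Thus there is an orientation-preserving self-homeomorphism $\varphi$ of $D_n$, permuting punctures, carrying the short arcs of $A$ onto the $n$ short arcs $\{\alpha_{i,i+1}\}_{i=1}^{n-1} \cup \{\alpha_{1,n}\}$ of the standard maximal watermelon $A^{\mathrm{std}}$ of \cref{Existence of standard maximal watermelon} and respecting the cyclic order. Replacing $A$ by $\varphi(A)$, I may assume that $A$ and $A^{\mathrm{std}}$ share the same short arcs.

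It then remains to match the non-short arcs. Each non-short arc of $A$ is disjoint from all short arcs, hence lies in the central disk $C$ obtained from $D_n$ by deleting the open lobes, with endpoints on the boundary intervals of $\partial D_n$ between consecutive lobes; such an arc separates the lobes into two cyclically contiguous groups, so $\operatorname{Par}$ assigns it a \emph{cyclic-interval} partition. The same applies to the $\tfrac12 n(n-1)$ arcs of $A^{\mathrm{std}}$ (\cref{cardinality of maximal watermelon}), which are pairwise distinct by injectivity of $\operatorname{Par}$; since there are exactly $\binom{n}{2} = \tfrac12 n(n-1)$ cyclic-interval partitions (one for each unordered pair of gaps), $A^{\mathrm{std}}$ realizes every cyclic-interval partition exactly once. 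Therefore each non-short arc of $A$ shares its partition with a unique arc of $A^{\mathrm{std}}$.

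Finally I would promote ``same partition'' to ``isotopic,'' which I expect to be the main obstacle: $\operatorname{Par}$ is only asserted injective within a single watermelon, whereas here I compare an arc of $A$ with an arc of $A^{\mathrm{std}}$, and equal partitions do not imply isotopy for arbitrary arcs. The resolution is planarity: both arcs avoid every lobe, so both live in the disk $C$, and in a disk an embedded arc is determined up to isotopy by the way it partitions the marked boundary data---here, the lobes---so two arcs of $C$ that separate the lobes identically are isotopic in $C$, hence in $D_n$. Consequently each non-short arc of $A$ is isotopic to the unique arc of $A^{\mathrm{std}}$ with the same partition, and distinct non-short arcs go to distinct standard arcs by injectivity of $\operatorname{Par}$ on $A$. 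Together with the matching of the short arcs, this exhibits $A$ as isotopic to a sub-collection of $A^{\mathrm{std}}$, i.e.\ watermelon-equivalent to a sub-watermelon of the standard maximal watermelon, as claimed.
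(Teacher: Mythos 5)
Your proof is correct and takes essentially the same approach as the paper's: both cut off the once-punctured lobes bounded by the $n$ short arcs (using \cref{short arcs are disjoint to any others} and \cref{arc = punctures partition}), observe that every non-short arc lies in the remaining \emph{unpunctured} disk, and classify arcs there by their boundary data, which is exactly the paper's statement that arcs in that disk are determined up to isotopy by the pair of boundary segments containing their endpoints. Your cyclic-interval counting and the explicit normalizing homeomorphism are just more detailed renderings of steps the paper leaves implicit, and your resolution of the ``same partition implies isotopic'' worry (planarity of the central disk) is precisely the paper's key observation.
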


\begin{proof}

Let $A$ be a watermelon on $D_n$, and let $\alpha_1 , \cdots, \alpha_n$ be the short arcs in $A$. Denote the component of the complement of $\alpha_i$ that contains only one puncture by $F_{\alpha_i , 1}$. By \cref{short arcs are disjoint to any others}, $F := D_n \setminus \left( \cup_{i=1}^n F_{\alpha_i,1} \right)$ is a topological disk where any non-short arc in $A$ lies on $F$ and $F \cap \partial D_n$ consists of $n$ segments, which we denote by $S_1 , \cdots , S_n$.

For any non-short arc $\alpha$ in $A$, its two endpoints lie on distinct boundary components $S_i$ and $S_j$. Moreover, two arcs in $F$ with endpoints lying on $S_1, \ldots, S_n$ are isotopic by an isotopy which setwise preserves $S_1, \ldots, S_n$ if and only if their endpoints lie on the same pair of segments. Hence, every arc in $A$ is isotopic to an arc in the standard maximal watermelon we constructed in \cref{Existence of standard maximal watermelon}.

\end{proof}

\begin{cor}
\label{Uniqueness of maximal watermelon with n short arcs}
For any $n \geq 3$, if a maximal watermelon on $D_n$ contains $n$ short arcs, then it is equivalent to the standard maximal watermelon constructed in \cref{Existence of standard maximal watermelon}.
\end{cor}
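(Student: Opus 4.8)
The plan is to deduce this statement directly from \cref{watermelon with n short arcs is sub-watermelon of the standard max one} by a cardinality count, so I expect no substantial new difficulty beyond bookkeeping. First I would invoke \cref{watermelon with n short arcs is sub-watermelon of the standard max one}: since the maximal watermelon $A$ contains $n$ short arcs and $n \geq 3$, there is a watermelon equivalence identifying $A$ with a sub-watermelon $B$ of the standard maximal watermelon $A_{\mathrm{std}}$ constructed in \cref{Existence of standard maximal watermelon}.

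Next I would compare cardinalities. A watermelon equivalence $(\varphi, \psi)$ includes a bijection $\psi$ between the index sets, so it preserves the number of arcs; hence $\# A = \# B$. On the one hand, $A$ is maximal, so by \cref{cardinality of maximal watermelon} we have $\# A = \tfrac{1}{2} n(n-1)$. On the other hand, $A_{\mathrm{std}}$ is itself a maximal watermelon (as established in \cref{Existence of standard maximal watermelon}), so again by \cref{cardinality of maximal watermelon} we have $\# A_{\mathrm{std}} = \tfrac{1}{2} n(n-1)$.

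Finally, since $B \subseteq A_{\mathrm{std}}$ and $\# B = \# A = \tfrac{1}{2}n(n-1) = \# A_{\mathrm{std}}$, the inclusion $B \subseteq A_{\mathrm{std}}$ of finite sets of equal cardinality forces the equality $B = A_{\mathrm{std}}$. Composing the equivalence between $A$ and $B$ with this identification shows that $A$ is equivalent to the standard maximal watermelon $A_{\mathrm{std}}$, as claimed. The only point requiring care is the verification that watermelon equivalence preserves the number of arcs and that $A_{\mathrm{std}}$ genuinely realizes the maximal count $\tfrac{1}{2}n(n-1)$; both are immediate from \cref{defi: Watermelon equivalence} and from \cref{Existence of standard maximal watermelon}, so I anticipate the argument will be very short and contain no real obstacle.
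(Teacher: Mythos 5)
Your proposal is correct and is exactly the intended derivation: the paper states this as an immediate corollary of \cref{watermelon with n short arcs is sub-watermelon of the standard max one}, with the implicit step being precisely your cardinality count via \cref{cardinality of maximal watermelon} (both $A$ and the standard watermelon have $\tfrac{1}{2}n(n-1)$ arcs, so the sub-watermelon must be everything). No gap; this matches the paper's reasoning.
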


\begin{nota}
Given the existence and uniqueness of the standard maximal watermelon (as established above), we denote it by $A^s$, and denote the standard maximal complete $1$-system of loops by $L^s$.
\end{nota}

\section{Uniqueness of Maximal Complete $1$-System on $N_{1,n}$ when $n \leq 5$}\label{section6}

In this section, we prove the ``if'' direction of main theorem \cref{main}: maximal complete $1$-systems of loops on $N_{1,n}$ are unique up to the action of the mapping class group \emph{if} $n \leq 5$. We treat the cases $n = 2, 3, 4, 5$ individually (see \cref{thm: Uniqueness when n = 2}, \cref{Thm: Uniqueness when n = 3}, \cref{Thm: Uniqueness when n = 4}, and \cref{Thm: Uniqueness when n = 5}). The case $n \geq 6$, which corresponds to the ``only if'' direction, is handled separately in \cref{thm: NonUniquenessNGeqSix}.

In all relevant theorems below, we will begin by enumerating the maximal watermelons up to watermelon equivalence. Each such watermelon corresponds to a maximal marked complete $1$-system of loops (see \cref{thm: watermelons and marked loops systems} and \cref{MaxWDisMaxLoops}). By forgetting the marks, we then study whether the resulting maximal complete $1$-systems of loops are all equivalent.

\begin{rmk}
In fact, the proof for the case $n = 1$ is already implicitly contained in \cite[Section 5.1]{chen2024systems}, where the approach is to directly consider the system of loops. In the following, we consider the cases $n = 2, \cdots, 5$, using watermelons.
\end{rmk}

Recall that the cardinality of a maximal watermelon on $D_n$ is $\tfrac{1}{2} n(n - 1)$ (\cref{cardinality of maximal watermelon}). Therefore, when $n = 2, 3, 4, 5$, the corresponding cardinalities of the maximal watermelons are $1, 3, 6, 10$, respectively. We will assume these numbers in the following discussion.

In the following proofs, we will frequently use \cref{arc = punctures partition}. That is, different arcs in a maximal watermelon correspond to different partitions of the punctures in $D_n$. Moreover, the total number of possible ways to partition the punctures gives an upper bound on the cardinality of a maximal watermelon.

\subsection{Uniqueness when n is two or three}

\begin{thm}
\label{thm: Uniqueness when n = 2}
Maximal complete $1$-systems of loops on $N_{1,2}$ are unique up to equivalence.
\end{thm}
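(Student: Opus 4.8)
The plan is to work entirely on the disk side, via the correspondence $\mathcal{M}=\mathcal{W}$ of \cref{thm: watermelons and marked loops systems}, and only transfer back to loops at the very end. First I would record that, by \cref{cardinality of maximal watermelon}, a maximal watermelon on $D_2$ consists of exactly $\tfrac12\cdot 2\cdot 1 = 1$ arc. Thus the promised enumeration of maximal watermelons up to watermelon equivalence reduces to classifying a single essential arc $\alpha$ on $D_2$ up to orientation-preserving self-homeomorphism of $D_2$.

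Next I would carry out that enumeration. An essential arc $\alpha$ separates $D_2$ into two components, each containing a nonempty set of punctures; since there are only two punctures, the induced partition $\operatorname{Par}(\alpha)$ of \cref{arc = punctures partition} must be the unique nontrivial bipartition $\{\{P_1\},\{P_2\}\}$, and in particular $\alpha$ is automatically a short arc in the sense of \cref{Short arcs}. It then remains to check that any two essential arcs realizing this partition are watermelon equivalent. For this I would cut $D_2$ along such an arc to obtain two once-punctured disks; since a disk with a single marked point is unique up to a homeomorphism respecting the decomposition of its boundary into the $\alpha$-part and the $\partial D_2$-part, one can assemble an orientation-preserving self-homeomorphism of $D_2$ carrying one arc to the other. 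This is a standard change-of-coordinates argument, and it yields that there is exactly one maximal watermelon on $D_2$ up to watermelon equivalence.

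Finally I would transfer back to loops. By \cref{MaxWDisMaxLoops} together with \cref{thm: watermelons and marked loops systems}, the single maximal watermelon class corresponds to a single maximal marked complete $1$-system of loops up to mark equivalence. To deduce the unmarked statement, take any two maximal complete $1$-systems $L,L'$ on $N_{1,2}$, equip each with an arbitrary mark, and pass to the associated watermelons; these are maximal by \cref{MaxWDisMaxLoops}, hence watermelon equivalent by the previous step. Unwinding the correspondence, the two marked systems are mark equivalent, and since a mark equivalence is in particular an equivalence of the underlying systems (\cref{defi: Mark equivalence}, \cref{defi: Equivalent systems}), we conclude $L\cong L'$ up to the mapping class group action.

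The only genuine content — and hence the step I expect to be the main obstacle — is the uniqueness claim inside the enumeration: that every essential simple arc on the twice-punctured disk is equivalent to the standard one. Everything else is bookkeeping through the $\mathcal{M}=\mathcal{W}$ dictionary, and in particular the passage from marked to unmarked systems is automatic in this direction, since watermelon equivalence of the associated watermelons already forces equivalence of the systems. I expect the change-of-coordinates argument to be routine here precisely because $n=2$ forces a single partition type, so no casework arises; it is for larger $n$, where several partition types and genuinely distinct maximal watermelons appear, that the analysis becomes substantive.
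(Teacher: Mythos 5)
Your proposal is correct and takes essentially the same approach as the paper: the paper likewise reduces to the watermelon side, notes that a maximal watermelon on $D_2$ is a single arc which must separate the two punctures (hence is unique up to equivalence), and transfers back through the $\mathcal{M}=\mathcal{W}$ correspondence to conclude uniqueness of the maximal complete $1$-system. The paper's version is terser — it does not spell out the change-of-coordinates argument or the marked-to-unmarked bookkeeping — but the content is identical.
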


\begin{proof}
A maximal watermelon on $D_2$ consists of a single arc, and it is unique, since it can only consist of the arc that separates the two punctures. Consequently, the maximal marked complete $1$-system of loops on $D_2 / {\sim}$ is also unique, and thus the maximal complete $1$-system of loops on $D_2 / {\sim}$ is unique as well.


\begin{figure}[H]
\centering
\includegraphics[width=1.0\textwidth]{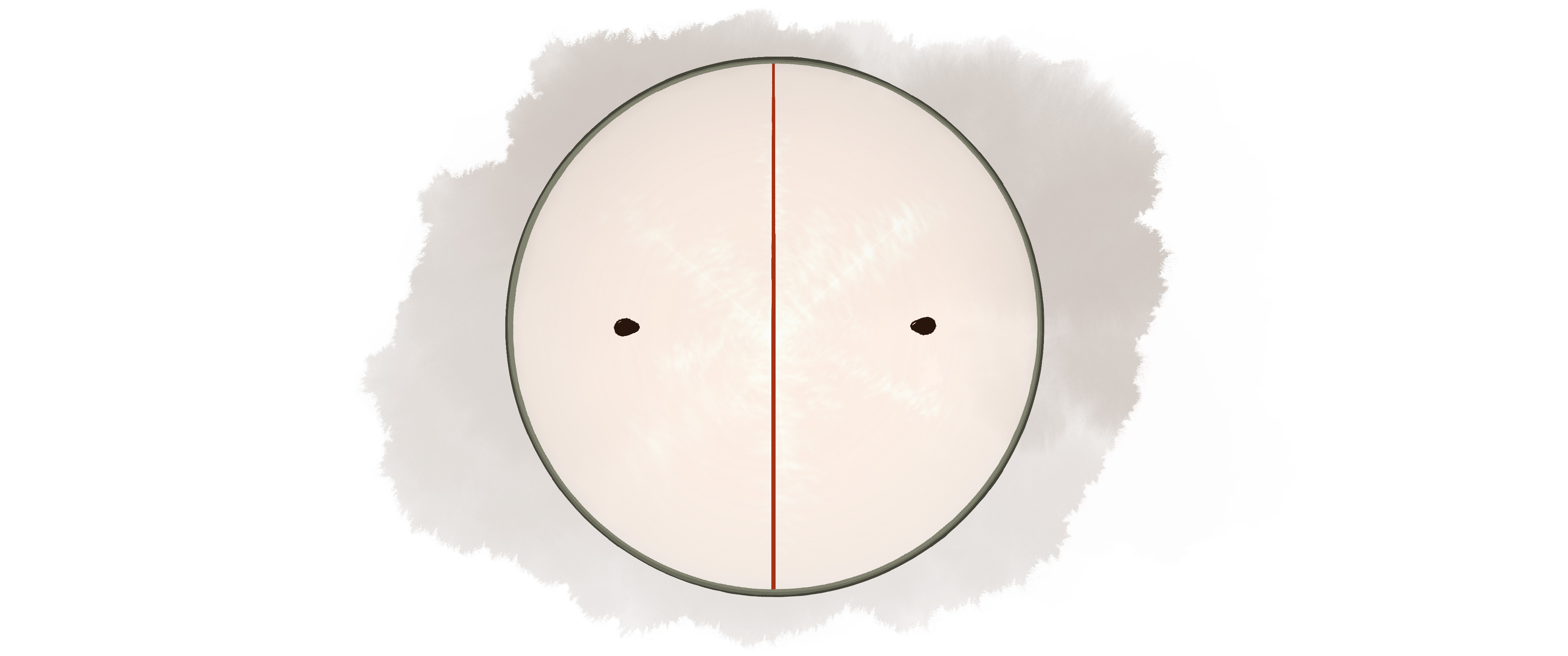}
\caption{The unique maximal watermelon on $D_2$.}
\label{UniquenessWhenNIsTwo}
\end{figure}

\end{proof}

\begin{thm}
\label{Thm: Uniqueness when n = 3}
Maximal complete $1$-systems of loops on $N_{1,3}$ are unique up to equivalence.
\end{thm}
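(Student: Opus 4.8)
The plan is to transport the question, via the watermelon dictionary of \cref{thm: watermelons and marked loops systems}, into the enumeration of maximal watermelons on $D_3$, and then to exploit the smallness of $n = 3$ to force every such watermelon to be standard.

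I would begin by recalling from \cref{cardinality of maximal watermelon} that a maximal watermelon on $D_3$ consists of exactly $\tfrac{1}{2}\cdot 3 \cdot 2 = 3$ arcs. By \cref{arc = punctures partition}, the map $\operatorname{Par}$ assigning to each arc the nontrivial bipartition of the three punctures that it induces is injective, so these three arcs realise three \emph{distinct} bipartitions. The crucial observation is purely combinatorial: a three-element set admits exactly three nontrivial bipartitions, each of the form $\{P_i\} \sqcup \{P_j, P_k\}$, and every one of them isolates a single puncture. Hence all three arcs are short in the sense of \cref{Short arcs}, the watermelon contains exactly $n = 3$ short arcs, and by \cref{defi: Standard maximal watermelon} it is standard.

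With this in hand I would apply \cref{Uniqueness of maximal watermelon with n short arcs}, which (for $n \geq 3$) guarantees that any maximal watermelon on $D_3$ containing $n$ short arcs is watermelon equivalent to the standard maximal watermelon $A^s$. Therefore the maximal watermelon on $D_3$ is unique up to watermelon equivalence. Passing back through the bijection $\mathcal{M} = \mathcal{W}$ together with \cref{MaxWDisMaxLoops}, this uniqueness yields that the maximal marked complete $1$-system of loops on $N_{1,3}$ is unique up to mark equivalence; forgetting the mark, and noting that a mark equivalence is in particular an equivalence of systems of loops (\cref{defi: Equivalent systems}), I conclude that all maximal complete $1$-systems on $N_{1,3}$ are equivalent.

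I do not anticipate a serious obstacle: the argument is driven entirely by the numerical coincidence that for $n = 3$ the number of arcs in a maximal watermelon equals the number of nontrivial bipartitions of the punctures, all of which happen to be of short type. The only step deserving a word of care is the final passage from marked uniqueness to unmarked uniqueness, but this is immediate once one records that mark equivalence refines equivalence as systems of loops.
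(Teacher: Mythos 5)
Your proposal is correct and follows essentially the same route as the paper: both arguments count the three arcs of a maximal watermelon on $D_3$, observe that every nontrivial bipartition of three punctures isolates a single puncture so all arcs are short, invoke \cref{Uniqueness of maximal watermelon with n short arcs}, and then descend through the watermelon correspondence to the loop systems. Your write-up merely makes explicit (via the injectivity of $\operatorname{Par}$ and the mark-forgetting step) details the paper leaves implicit.
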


\begin{proof}
A maximal watermelon $A$ on $D_3$ consists of three arcs, which we denote by $A = { \alpha_1, \alpha_2, \alpha_3 }$. Since $\alpha_1$, $\alpha_2$, and $\alpha_3$ are non-isotopic short arcs, each of them separates $D_3$ into two components, one of which contains exactly one puncture. By \cref{Uniqueness of maximal watermelon with n short arcs}, $A$ is unique up to watermelon equivalence (see also \cref{UniquenessWhenNIsThree}). Hence, the maximal complete $1$-system of loops on $D_3 / {\sim}$ is unique as well.

\begin{figure}[H]
\centering
\includegraphics[width=1.0\textwidth]{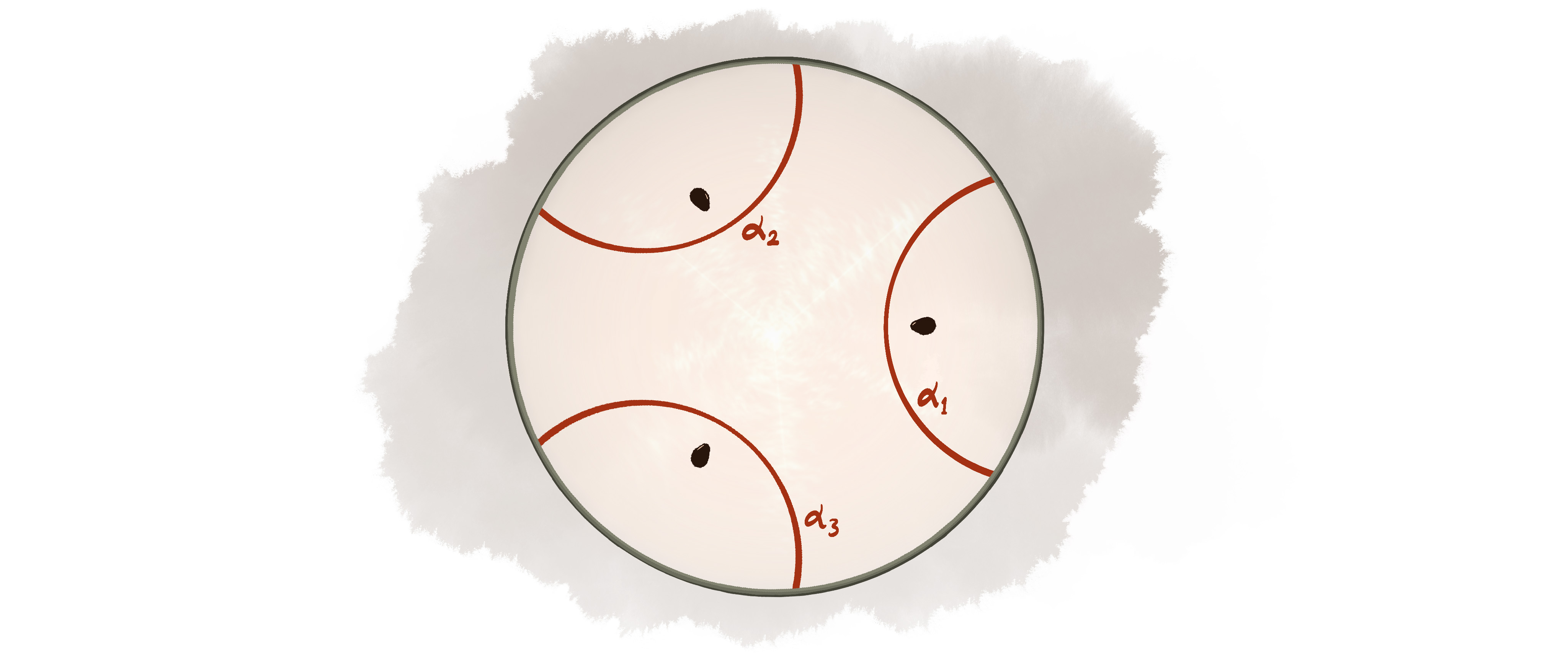}
\caption{The unique maximal watermelon on $D_3$.}
\label{UniquenessWhenNIsThree}
\end{figure}

\end{proof}

\subsection{Uniqueness when n is four}

\begin{thm}
\label{Thm: Uniqueness when n = 4}
Maximal complete $1$-systems of loops on $N_{1,4}$ are unique up to equivalence.
\end{thm}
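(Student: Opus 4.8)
The plan is to reduce the statement to \cref{Uniqueness of maximal watermelon with n short arcs} by showing that \emph{every} maximal watermelon on $D_4$ contains all four short arcs. I would work entirely with watermelons rather than loop systems, invoking the correspondence $\mathcal{M}=\mathcal{W}$ of \cref{thm: watermelons and marked loops systems} together with \cref{MaxWDisMaxLoops} only at the very end to transfer the conclusion back to complete $1$-systems of loops; since passing from marked to unmarked systems only coarsens the equivalence relation, uniqueness is preserved under forgetting the mark.

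First I would set up the combinatorial bookkeeping. By \cref{cardinality of maximal watermelon}, a maximal watermelon $A$ on $D_4$ has exactly $\tfrac12\cdot 4\cdot 3 = 6$ arcs, and by \cref{arc = punctures partition} these arcs inject into the set of nontrivial bipartitions of the four punctures. There are exactly seven such bipartitions: four of type $1+3$, which are precisely the short arcs (\cref{Short arcs}), and three of type $2+2$, which I will call medium. Hence $A$ realizes six of these seven partitions and omits exactly one, so the whole question is \emph{which} partition is omitted.

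The crux is to rule out the possibility that $A$ omits a short arc. Suppose it did; then $A$ would consist of the three remaining short arcs together with all three medium arcs. Let $c$ be the omitted short arc, isolating a puncture $P$. Rerunning the half-bigon argument of \cref{short arcs are disjoint to any others}, any arc meeting $c$ would cut the once-punctured disk bounded by $c$ and thereby create a half-bigon, so $i([c],[\beta])=0$ for every $\beta\in A$; since $c$ is moreover essential and induces a partition distinct from those of all arcs of $A$, the collection $A\cup\{c\}$ is a watermelon with seven arcs. This contradicts the sharp cardinality bound of \cref{cardinality of maximal watermelon}. Therefore $A$ must omit a medium arc and contains all four short arcs. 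This ``add back the missing short arc'' step, leaning on the exact cardinality count, is the main point, and I expect it to be the chief obstacle: it is precisely where one would otherwise have to verify by hand the geometric fact that the three medium partitions cannot be realized simultaneously as a $1$-system, whereas here that fact is packaged for free into the cardinality inequality.

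Finally, with all $n=4$ short arcs present, \cref{Uniqueness of maximal watermelon with n short arcs} shows that $A$ is watermelon-equivalent to the standard maximal watermelon of \cref{Existence of standard maximal watermelon}. Thus all maximal watermelons on $D_4$ lie in a single equivalence class, hence so do all maximal marked complete $1$-systems of loops; forgetting the marks yields the asserted uniqueness of maximal complete $1$-systems of loops on $N_{1,4}$ up to the mapping class group action. The one routine check I would still confirm explicitly is that the essentiality and non-isotopy of the re-added arc $c$ hold verbatim in the three-short-arc configuration, but both are immediate from the puncture-partition description in \cref{arc = punctures partition}.
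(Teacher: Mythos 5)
Your reduction to \cref{Uniqueness of maximal watermelon with n short arcs} rests on the claim that every maximal watermelon on $D_4$ contains all four short arcs, and this claim is false; the step that fails is the passage from ``$i([c],[\beta])=0$ for every $\beta\in A$'' to ``$A\cup\{c\}$ is a watermelon.'' A watermelon consists of actual representatives that are \emph{pairwise in minimal position} (\cref{defi: Systems of curves}), so what you need is a single representative of the class $[c]$ that is simultaneously disjoint from all six arcs of $A$. Pairwise vanishing of the geometric intersection numbers does not provide this, and in the three-short-arc configuration it is genuinely impossible: the three non-short (``medium'') arcs of $A$ realize the three pairwise non-nested $2{+}2$ partitions, hence pairwise intersect exactly once, and the three sub-arcs between their crossing points bound a triangle whose interior contains the puncture $P$. (If some other puncture $P_j$ lay in that triangle, its short arc $c_j\in A$, being disjoint from the medium arcs, would force each medium arc to lie entirely in the once-punctured disk cut off by $c_j$ --- absurd, since a medium arc has two punctures on each side.) Now any arc isolating $P$ has its endpoints on $\partial D_4$, so it cannot lie in the interior of this triangle; if it were disjoint from the medium arcs it could not cross the triangle's boundary either, so it would force the medium arcs into the once-punctured disk it cuts off --- the same contradiction. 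Hence no representative of $[c]$ is disjoint from all of $A$, and $A\cup\{c\}$ is never a watermelon; your appeal to the cardinality bound of \cref{cardinality of maximal watermelon} never gets off the ground.

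Indeed the statement you are trying to prove is false: the paper exhibits a maximal watermelon on $D_4$ with exactly three short arcs (\cref{THreeShortArcsSystemWhenNIsFour}), and relies on its existence again in the $n=5$ case. This watermelon is not watermelon equivalent to the standard one, since watermelon equivalence preserves the number of short arcs by \cref{arc = punctures partition}. The theorem nevertheless holds because equivalence of loop systems on $N_{1,4}$ is strictly coarser than watermelon equivalence on $D_4$: the paper shows that the loop system associated to the three-short-arc watermelon is carried to the standard system $L^s$ by a \emph{puncture slide} (\cref{Defi: PunctureSlides}, \cref{UniquenessWhenNIsFour}), a mapping class that exists only because of the cross-cap and is invisible at the level of watermelons. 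This case analysis and the puncture-slide step are the real content of the paper's proof of \cref{Thm: Uniqueness when n = 4}, and they are entirely absent from your argument.
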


\begin{proof}
A maximal watermelon $A$ on $D_4$ consists of six arcs, which we denote by $A = \{ \alpha_1, \cdots, \alpha_6 \}$. Let $\mathcal{P} = \{P_1, P_2, P_3, P_4 \}$ be the four punctures of $D_4$. Since there are only three ways to bipartition $\mathcal{P}$ into two subsets, each containing exactly two elements, there can be at most three non-short arcs in $A$. It follows that $A$ contains at least three short arcs (and at most four).

By \cref{Uniqueness of maximal watermelon with n short arcs}, a maximal watermelon with four short arcs is unique up to watermelon equivalence. It therefore suffices to show that a maximal complete $1$-system of loops $L_A$ associated with a watermelon $A$ with exactly three short arcs is equivalent to the standard maximal complete $1$-system of loops $L^s$.

\begin{figure}[H]
\centering
\includegraphics[width=1.0\textwidth]{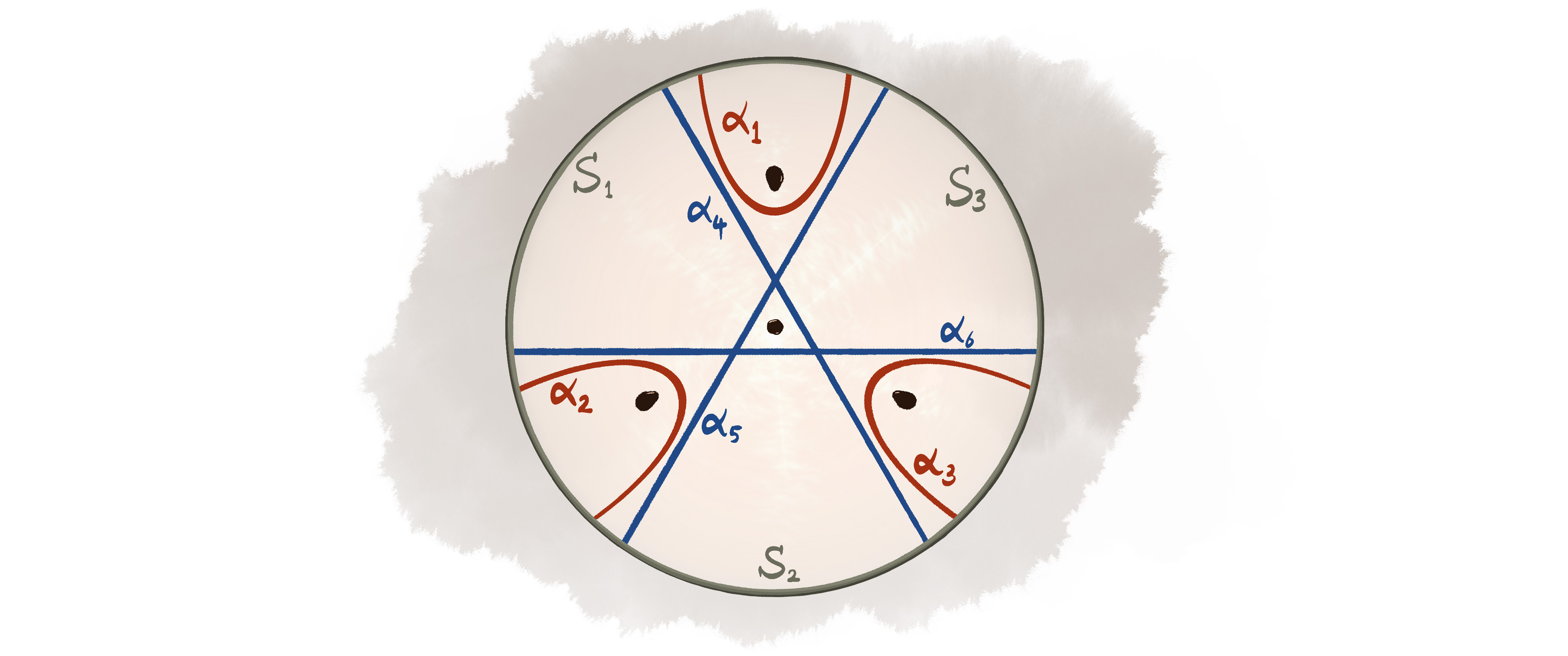}
\caption{The maximal watermelon with three short arcs on $D_4$.}
\label{THreeShortArcsSystemWhenNIsFour}
\end{figure}

\begin{figure}[H]
\centering
\includegraphics[width=1.0\textwidth]{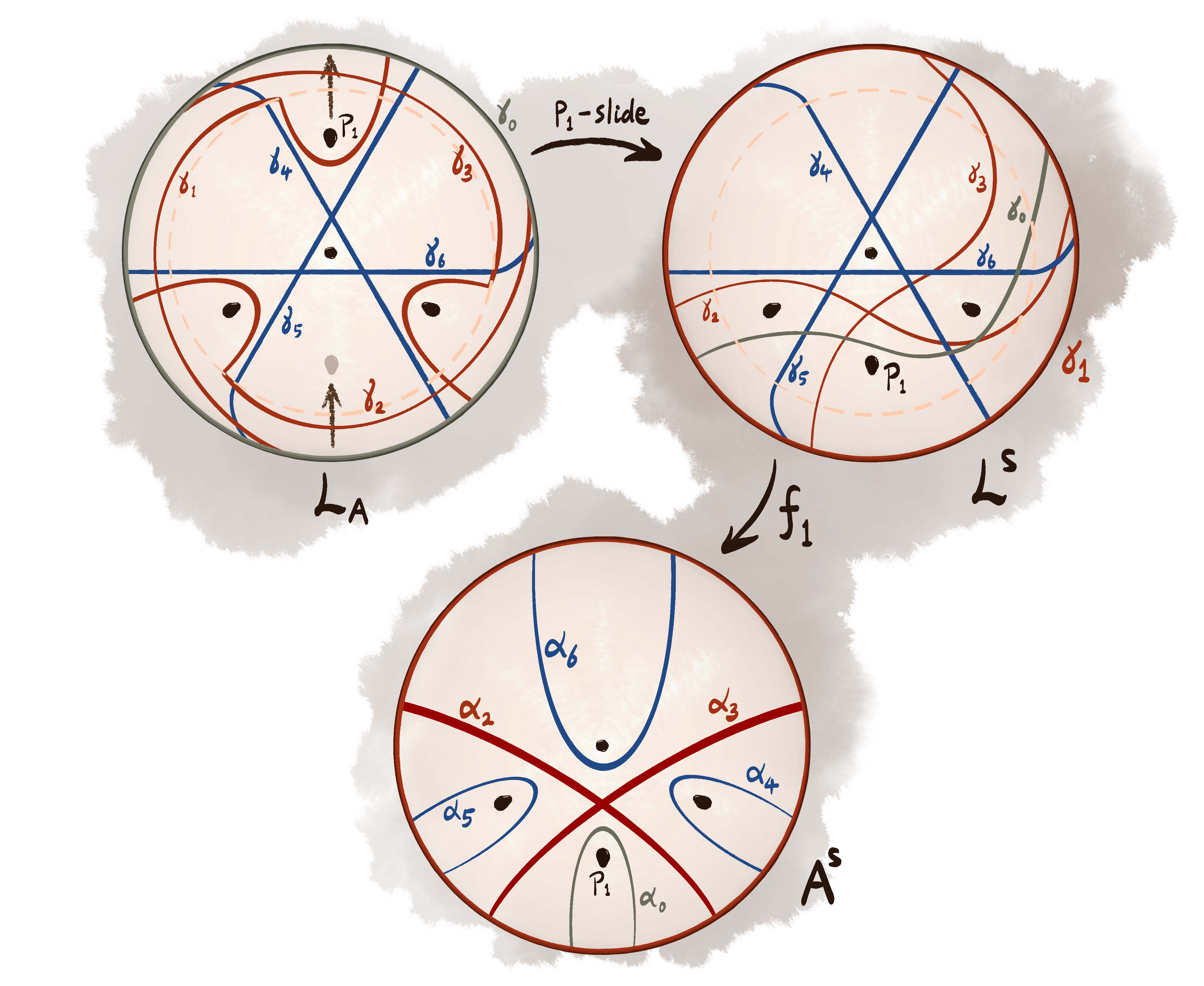}
\caption{Puncture sliding $L_A$ to $L^s$ when $n=4$.}
\label{UniquenessWhenNIsFour}
\end{figure}

Let $\alpha_1, \alpha_2, \alpha_3$ denote the short arcs in $A$, and $\alpha_4, \alpha_5, \alpha_6$ the non-short arcs. By \cref{short arcs are disjoint to any others}, the arcs $\alpha_4, \alpha_5, \alpha_6$ lie in the region $F := D_n \setminus \left( \cup_{i=1}^3 F_{\alpha_i,1} \right)$, which is topologically a punctured disk. Denote by $S_1, S_2, S_3$ the three segments of $F \cap \partial D_n$. Since $F$ contains one puncture, simple arcs in $F$ connecting $S_i$ and $S_j$ (where $i$ and $j$ are not necessarily distinct) fall into exactly two isotopy classes. In total, there are nine nontrivial simple arcs in $F$, up to isotopies which setwise preserve $S_1, S_2$ and $S_3$. Among them, exactly three classes contain non-short arcs as representatives. The arcs $\alpha_4, \alpha_5, \alpha_6$ respectively lie in these three isotopy classes. Hence, all arcs in $A$ are determined (see \cref{THreeShortArcsSystemWhenNIsFour} for an illustration of $A$).

For $L_A$, we perform a puncture slide of $P_1$ as described in \cref{UniquenessWhenNIsFour} (see also \cref{Defi: PunctureSlides}; here, the loops $\gamma_i := \gamma(\alpha_i)$ in $L_A$ are associated loops of $\alpha_i$ in the watermelon $A$). After the puncture slide, we obtain that $L_A$ is equivalent to the standard maximal complete $1$-system of loops $L^s$.

\end{proof}

\subsection{Uniqueness when n is five}

\begin{defi}[P-parallel]
\label{defi: P-parallel}
Let $\alpha_1, \alpha_2$ be two arcs on $D_n$ with $n \geq 3$, and let $P$ be a puncture of $D_n$. We denote by $D_{n-1} := D_n \cup \{P\}$ the surface obtained from $D_n$ by filling in the puncture $P$. We say that $\alpha_1$ and $\alpha_2$ are \emph{$P$-parallel} if they are isotopic on $D_{n-1}$.
\end{defi}

\begin{lem}
\label{lem: fill point at most two arcs are isotopic}
Let $A$ be a watermelon on $D_n$ with $n \geq 2$. If two distinct arcs in $A$ are $P$-parallel, then they are disjoint. Moreover, at most two distinct arcs in $A$ can be $P$-parallel.
\end{lem}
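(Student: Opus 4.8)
The plan is to reduce everything to the combinatorics of puncture-partitions via \cref{arc = punctures partition}. The key preliminary observation, which I would record first, is the following translation of the hypothesis. If $\alpha_1,\alpha_2\in A$ are distinct and $P$-parallel, then they are isotopic on $D_{n-1}$; since isotopic arcs induce the same puncture partition (the well-definedness part of \cref{arc = punctures partition}, applied on $D_{n-1}$), they determine the same bipartition $\{Q_1,Q_2\}$ of $\mathcal{P}\setminus\{P\}$. Hence $\operatorname{Par}(\alpha_1)$ and $\operatorname{Par}(\alpha_2)$ restrict to $\{Q_1,Q_2\}$ after deleting $P$; since $\operatorname{Par}$ is injective and the two arcs are distinct, their $\mathcal{P}$-partitions are distinct, and the only way two nontrivial bipartitions of $\mathcal{P}$ can restrict to the same $\{Q_1,Q_2\}$ is that they are $\{Q_1\cup\{P\},\,Q_2\}$ and $\{Q_1,\,Q_2\cup\{P\}\}$. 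In particular this forces both $Q_1$ and $Q_2$ to be nonempty, since otherwise only one valid bipartition would remain, contradicting distinctness.

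For the disjointness claim I would argue by contradiction. Since $A$ is a $1$-system, two arcs meet at most once, and as the endpoints slide freely on the single boundary circle $\partial D_n$, a nonempty intersection may be taken to be a single transverse crossing with four distinct endpoints. Then $\alpha_1\cup\alpha_2$ cuts $D_n$ into four ``triangular'' regions, one on each pair of sides, and the puncture sets of these regions are exactly the four Venn cells of the two partitions. Using the explicit partitions from the previous paragraph, these four cells are $Q_1$, $Q_2$, $\{P\}$, and $Q_1\cap Q_2=\emptyset$; that is, exactly one of the four regions contains no puncture. But a region bounded by a sub-arc of $\alpha_1$, a sub-arc of $\alpha_2$, and a boundary arc and containing no puncture is a half-bigon, contradicting minimal position (the same half-bigon criterion already used in \cref{short arcs are disjoint to any others}). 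Hence $\alpha_1$ and $\alpha_2$ cannot cross, so they are disjoint.

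For the second claim I would simply count partitions. Suppose $\alpha_1,\alpha_2,\alpha_3\in A$ are pairwise $P$-parallel; since isotopy on $D_{n-1}$ is transitive, they share a common off-$P$ partition $\{Q_1,Q_2\}$, so by the first paragraph each $\operatorname{Par}(\alpha_i)$ is one of the at most two bipartitions $\{Q_1\cup\{P\},Q_2\}$ and $\{Q_1,Q_2\cup\{P\}\}$. By the injectivity of $\operatorname{Par}$ (\cref{arc = punctures partition}) the three arcs have three distinct $\mathcal{P}$-partitions, which is impossible since only two are available. Therefore at most two arcs of $A$ can be $P$-parallel.

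The only point that needs genuine care is the disjointness claim: one must verify that two once-crossing arcs cut the disk into exactly four pieces whose puncture sets are precisely the four Venn cells of the associated partitions, and that an empty cell really does produce a half-bigon forbidden by minimal position. I expect this to be the main (though essentially standard) obstacle; once it is in place, both conclusions drop out of the partition bookkeeping, and the combinatorial counting for the ``at most two'' statement is immediate.
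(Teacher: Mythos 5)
Your proof is correct, but it follows a genuinely different route from the paper's, most notably for the ``at most two'' claim. For disjointness the two arguments are cousins: both look at the four regions cut out by a single crossing and contradict minimal position by exhibiting an empty region. The paper produces that empty region by noting that isotopy on $D_{n-1}$ forces the region containing $P$ to contain only $P$ and the region opposite it to be an empty half-bigon, whereas you produce it from the partition dictionary --- the two partitions must be $\{Q_1\cup\{P\},Q_2\}$ and $\{Q_1,Q_2\cup\{P\}\}$, so the Venn cell $Q_1\cap Q_2$ is empty. For the second claim the difference is substantial: the paper first needs pairwise disjointness and then runs a geometric case analysis on where a third arc can sit relative to the first two (inside either outer region or the middle region, with a null-homotopy subcase), while you get the bound purely combinatorially from the injectivity of $\operatorname{Par}$ (\cref{arc = punctures partition}), since only two nontrivial bipartitions of $\mathcal{P}$ restrict to a given bipartition of $\mathcal{P}\setminus\{P\}$ and arcs of $A$ are pairwise non-isotopic. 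Your counting argument is shorter, is independent of the disjointness claim, and handles the degenerate case where one of $Q_1,Q_2$ is empty without extra words; it is also closer in spirit to how the lemma is exploited later (in \cref{6 arcs P-parallel} an arc $P$-parallel to a short arc isolating $Q_i$ is identified as the arc isolating the pair $\{Q_i,P\}$, which is exactly your dictionary). What the paper's geometric analysis buys instead is the explicit picture that two disjoint $P$-parallel arcs cobound a region whose only puncture is $P$. The one step you flag as needing care --- that a once-crossing pair of arcs cuts $D_n$ into four regions whose puncture sets are the four Venn cells, and that an empty such region is a half-bigon forbidden by minimal position --- is the same standard fact the paper relies on (there and in \cref{short arcs are disjoint to any others}), so there is no gap.
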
      

\begin{proof}
Let $\alpha_1$ and $\alpha_2$ be two distinct $P$-parallel arcs in $A$. Suppose for contradiction that $\alpha_1$ and $\alpha_2$ intersect. Then they divide $D_n$ into four regions. Since $\alpha_1$ and $\alpha_2$ are in minimal position on $D_n$, each of these four regions must contain at least one puncture. On the other hand, because $\alpha_1$ and $\alpha_2$ are $P$-parallel, the region that originally contains $P$ contains no other punctures. After filling in $P$, this region becomes a half-bigon, which allows $\alpha_1$ and $\alpha_2$ to be isotoped to be disjoint. Moreover, for them to be isotopic to each other, the region on the opposite side must also be a half-bigon. This contradicts the earlier observation that all four regions initially contained punctures. Hence, $\alpha_1$ and $\alpha_2$ cannot intersect.

Now suppose that there are three arcs $\alpha_1, \alpha_2, \alpha_3$ in $A$ that are pairwise $P$-parallel. Then they must be pairwise disjoint. In particular, $\alpha_1$ and $\alpha_2$ divide $D_n$ into three regions: let $F_1$ be the region bounded by $\alpha_1$ and $\partial D_n$, $F_2$ the region bounded by $\alpha_2$ and $\partial D_n$, and $F_3$ the region between $\alpha_1$ and $\alpha_2$. As above,  each of these three regions must contain punctures, and $F_3$ (the middle region) contains $P$ and only $P$. The same conclusion also holds for the pairs $(\alpha_1, \alpha_3)$ and $(\alpha_2, \alpha_3)$.

Consider now the possible positions of $\alpha_3$. We will show that, regardless of where $\alpha_3$ lies, there always exist indices $1 \leq i < j \leq 3$ such that the region between $\alpha_i$ and $\alpha_j$ does not contain $P$, and hence a contradiction is obtained. If $\alpha_3$ lies in $F_1$, then the region between $\alpha_1$ and $\alpha_3$ does not contain $P$, contradicting the conclusion. Similarly, $\alpha_3$ cannot lie in $F_2$. If $\alpha_3$ lies in $F_3$, then at least one of the following holds: the region between $\alpha_1$ and $\alpha_3$ does not contain $P$; the region between $\alpha_2$ and $\alpha_3$ does not contain $P$; or $\alpha_3$ is null-homotopic, which contradicts the definition of a watermelon, as no arc in a watermelon is null-homotopic.
\end{proof}

\begin{defi}[$P$-reduced watermelons]
\label{defi: $P$-reduced watermelons}
Let $A$ be a watermelon on $D_n$ with $n \geq 3$, and let $P$ be a puncture of $D_n$. Set $D_{n-1} := D_n \cup \{P\}$. Consider the partition of $A$ by $P$-parallelism, excluding the classes consisting of arcs that are null-isotopic on $D_{n-1}$:
\begin{align}
\mathcal{A} :=& \left\{ \left\{ \alpha' \in A \mid \alpha' \text{ is } P\text{-parallel to } \alpha \right\} \mid \alpha \in A, \ \alpha \text{ is not null-isotopic on } D_{n-1} \right\} \notag \\          
:=& \{A_1, \cdots, A_m\}. \notag
\end{align}
For each $i = 1, \cdots, m$, choose a representative arc $\alpha'_i$ such that $\alpha'_i$ is isotopic on $D_{n-1}$ to the arcs in $A_i$, and such that the set $A' := \{\alpha'_i \mid i = 1, \cdots, m\}$ is in pairwise minimal position on $D_{n-1}$. Then $A'$ forms a watermelon on $D_{n-1}$, and we call $A'$ a \emph{$P$-reduced watermelon} of $A$. We refer to $A_i$ as the \emph{$P$-source set} of $\alpha'_i$, and $\alpha'_i$ as a \emph{$P$-reduced arc} of $A_i$.
\end{defi}

\begin{rmk}
\label{rmk: each $P$-source set contains either one or two elements}
By \cref{lem: fill point at most two arcs are isotopic}, each $P$-source set contains either one or two elements.
\end{rmk}

\begin{defi}[Saturated watermelons]
\label{defi: Saturated watermelons}
Let $A$ be a watermelon on $D_n$. We say that $A$ is \emph{saturated}\footnote{This term is adapted from \cite{MR4900491}.} on $D_n$ if there is no simple arc $\alpha$ on $D_n$ such that $\{\alpha\} \cup A$ is a watermelon.
\end{defi}

\begin{lem}
\label{lem2: maximal system after filling is saturated}
Let $A$ be a maximal watermelon on $D_n$ with $n \geq 3$, and let $P$ be a puncture of $D_n$. Then any $P$-reduced watermelon $A'$ of $A$ is saturated on $D_{n-1}$.
\end{lem}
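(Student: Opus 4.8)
The plan is to argue by contradiction with the maximality of $A$: assuming $A'$ is \emph{not} saturated on $D_{n-1}$, I will manufacture an arc that can be adjoined to $A$, contradicting the fact that $A$ is a maximal watermelon on $D_n$. So suppose there is an essential simple arc $\beta$ on $D_{n-1}$, not isotopic on $D_{n-1}$ to any reduced arc $\alpha'_i$, with $i_{D_{n-1}}(\beta,\alpha'_i)\le 1$ for all $i$ and $\{\beta\}\cup A'$ a watermelon. Isotoping $\beta$ off the filled point $P$ realises it as a simple arc on $D_n$; among the representatives obtained this way (which differ by the side of $\beta$ on which $P$ is placed and by point-pushing of $P$) I choose one, $\tilde\beta$, that is in minimal position with every arc of $A$ simultaneously. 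The goal is then to check that $A\cup\{\tilde\beta\}$ is again a watermelon.

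Two of the three defining properties are immediate. For \emph{essentiality}, since $\beta$ is essential on $D_{n-1}$ both sides of $\beta$ carry a puncture, and adjoining $P$ to one side leaves both sides nonempty, so $\tilde\beta$ is essential on $D_n$. For \emph{non-isotopy to the arcs of $A$}, suppose $\tilde\beta$ were isotopic on $D_n$ to some $\alpha\in A$; filling $P$ would then make $\beta$ isotopic on $D_{n-1}$ to the image of $\alpha$. If $\alpha$ belongs to a source set $A_i$ this image is isotopic to $\alpha'_i$, contradicting the choice of $\beta$; if $\alpha$ is one of the (at most one, by injectivity of $\operatorname{Par}$ in \cref{arc = punctures partition}) arcs isolating $P$, then this image is null-isotopic, contradicting the essentiality of $\beta$. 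Hence $\tilde\beta$ is isotopic to no arc of $A$.

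The heart of the argument is the \emph{intersection bound} $i_{D_n}(\tilde\beta,\alpha)\le 1$ for every $\alpha\in A$. Since filling a puncture cannot raise intersection numbers, one always has $i_{D_n}(\tilde\beta,\alpha)\ge i_{D_{n-1}}(\beta,\bar\alpha)$, so the content is that re-puncturing at $P$ creates no extra crossing. I treat $\alpha$ according to the type of its class under $P$-parallelism, using \cref{rmk: each $P$-source set contains either one or two elements}. (i) If $\alpha$ isolates $P$ then, being a short arc, it is disjoint from every other arc of $A$ (\cref{short arcs are disjoint to any others}) and may be isotoped into a small disk about $P$; as $\beta$ avoids $P$ we may keep $\tilde\beta$ out of this disk, giving $i_{D_n}(\tilde\beta,\alpha)=0$. (ii) If $\alpha$ spans a singleton source set, then $\alpha\simeq\alpha'_i$ on $D_{n-1}$, and choosing the lift so as not to enclose $P$ in a bigon with $\alpha$ realises the equality $i_{D_n}(\tilde\beta,\alpha)=i_{D_{n-1}}(\beta,\alpha'_i)\le 1$. (iii) If $\alpha$ lies in a two-element source set $\{\alpha,\alpha''\}$, then by \cref{lem: fill point at most two arcs are isotopic} the arcs $\alpha,\alpha''$ are disjoint on $D_n$ and cobound a bigon whose only interior puncture is $P$; when $i_{D_{n-1}}(\beta,\alpha'_i)=0$ the arc $\tilde\beta$ stays to one side of this bigon and misses both, while when $i_{D_{n-1}}(\beta,\alpha'_i)=1$ it crosses the bigon beside $P$ and meets each of $\alpha,\alpha''$ exactly once. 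In every case $i_{D_n}(\tilde\beta,\alpha)\le 1$, so $A\cup\{\tilde\beta\}$ is a watermelon strictly larger than $A$ --- the desired contradiction.

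The step I expect to be the main obstacle is the construction of a single, globally coherent lift $\tilde\beta$ that makes all of (i)--(iii) hold at once. The local prescriptions --- which side of $\beta$ to place $P$ on, and routing $\tilde\beta$ beside $P$ through each two-element bigon --- all concern the one region surrounding $P$, where several parallel pairs together with the arc isolating $P$ may accumulate simultaneously. I would fix a small disk neighbourhood of $P$ in which $A$ appears as a standard model (finitely many arcs that are parallel pushoffs of the reduced arcs $\alpha'_i$), determine the side of $P$ from $\operatorname{Par}(\beta)$, and then verify minimality of $\tilde\beta$ against $A$ arc by arc by checking the absence of bigons; the care lies in confirming that tautening $\tilde\beta$ against one arc of $A$ does not create a bigon with another.
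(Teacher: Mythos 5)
Your overall strategy --- contradict the maximality of $A$ by lifting the new arc $\beta$ from $D_{n-1}$ back to an arc on $D_n$ --- is the same as the paper's, and your per-arc analysis (i)--(iii) correctly describes what a good lift would do against each individual arc of $A$. But there is a genuine gap, and it is exactly the one you flag in your final paragraph: you never prove that a single lift class $\tilde\beta$ can be chosen that is good for \emph{all} arcs of $A$ simultaneously. This is not a routine verification to be deferred. The choices in (ii) and (iii) --- which side of $\tilde\beta$ the puncture $P$ sits on, equivalently which point-pushing class of lift to use --- interact: every two-element source set, and every singleton source set whose arc crosses $\beta$, produces a potential $P$-bigon, and all of these bigons accumulate at the single puncture $P$, so tautening $\tilde\beta$ against one arc of $A$ can create a bigon (containing $P$) with another. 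Indeed, for any one fixed lift there may genuinely exist an arc of $A$ meeting it twice; which lift works depends globally on $A$, so no arc-by-arc argument can close this. (A smaller imprecision: in (i), a short arc isolating $P$ has endpoints on $\partial D_n$ and so cannot be isotoped into a small disk about $P$; one instead uses that $\beta$ can be isotoped on $D_{n-1}$ out of the unpunctured disk it cuts off.)

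The paper closes exactly this gap with a device your proposal lacks: it considers \emph{two} adjacent lifts at once --- $\alpha'$ and a parallel copy $\alpha''$, disjoint from $\alpha'$, with $P$ lying in the strip between them --- realizes everything by geodesics so that all pairs are in minimal position, and proves that at least one of the two lifts meets every arc of $A$ at most once. The mechanism is a contradiction internal to $A$: if some $\beta_1 \in A$ met $\alpha'$ twice and some $\beta_2 \in A$ met $\alpha''$ twice, then each pair would bound a (half-)bigon on $D_{n-1}$ which must contain $P$ (otherwise it would already be a bigon on $D_n$, contradicting minimal position); the bigon segments $S_{\beta_1}$ and $S_{\beta_2}$ are then forced into the strip between $\alpha'$ and $\alpha''$, and a short case analysis (bigon versus half-bigon) shows $\beta_1$ and $\beta_2$ would have to intersect at least twice --- impossible in a $1$-system. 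This ``two parallel copies'' argument is precisely the missing idea that replaces your unproven coherence step, and without it (or some substitute for it) your proof does not go through.
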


\begin{proof}
Assume that there is a simple arc $\alpha'$ such that $A' \cup \{ \alpha' \}$ is a watermelon on $D_{n-1}$. If $\alpha'$ passes through $P$, apply a slight perturbation to $\alpha'$ so that it no longer passes through $P$ and $A' \cup \{ \alpha' \}$ is still a watermelon on $D_{n-1}$. There is another arc $\alpha''$ that is isotopic to $\alpha'$ in $D_{n-1}$ such that $\alpha'$ and $\alpha''$ are disjoint, and $P$ lies between $\alpha'$ and $\alpha''$. The construction process of $\alpha''$ can be as follows: take a path connecting $P$ to a point on $\alpha'$ that does not pass through any other punctures. Define $\alpha''$ as follows: first follow $\alpha'$, then along the path to $P$, loop around $P$, and then return along the path before completing the remaining part of $\alpha'$ (see the first figure in \cref{MaxAfterFillIsTaut}). We may assume that every pair of arcs in $A \cup \{\alpha', \alpha''\}$ is in minimal position, since we may equip $D_n$ with a hyperbolic structure such that $\partial D_n$ is geodesic and choose geodesic representatives for the arcs in $A \cup \{\alpha', \alpha''\}$.

\begin{figure}[H]
\centering
\includegraphics[width=1.0\textwidth]{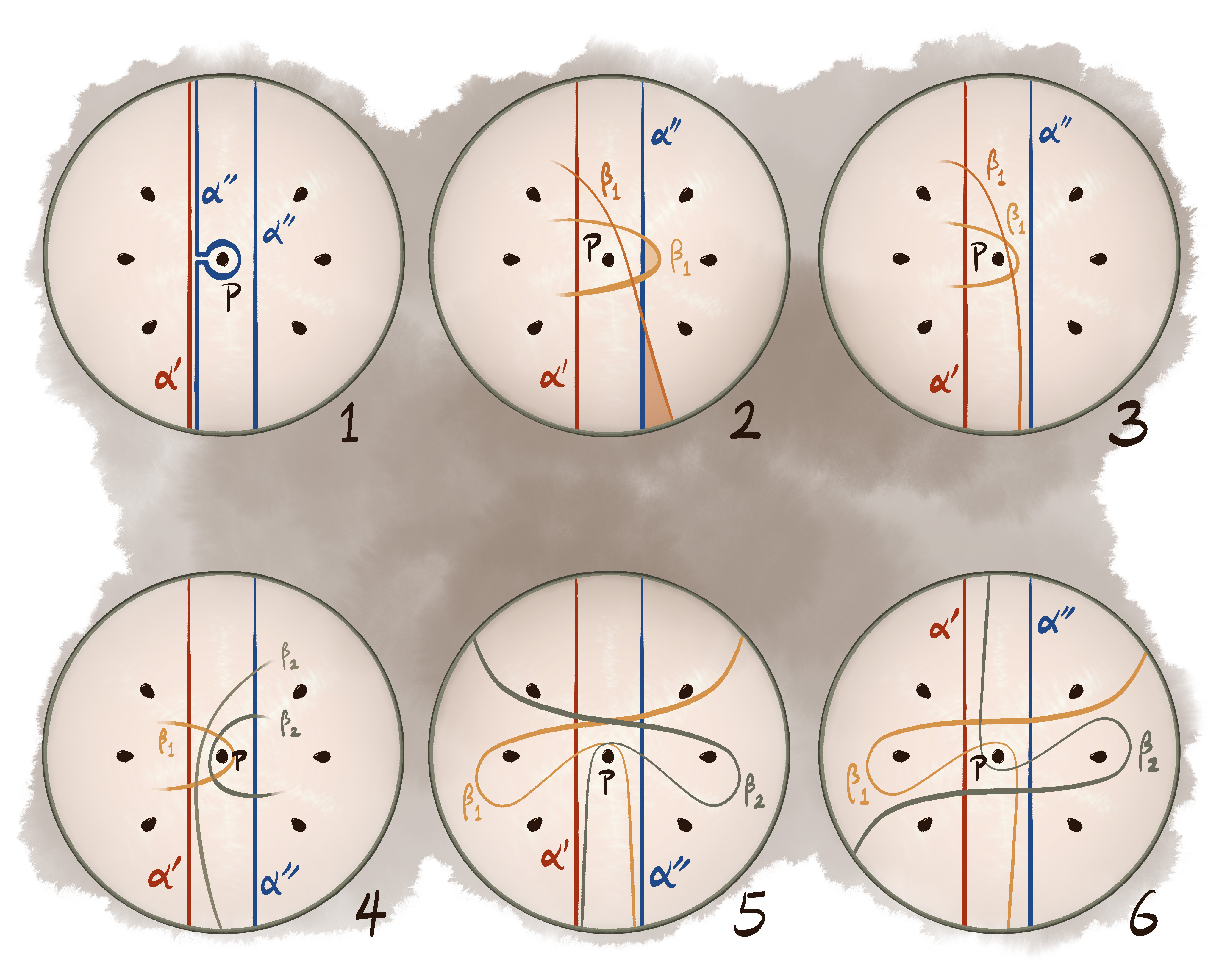}
\caption{The diagrams in the proof of \cref{lem2: maximal system after filling is saturated}.}
\label{MaxAfterFillIsTaut}
\end{figure}

We now show that at least one of $\alpha'$ and $\alpha''$ intersects every arc in $A$ at most once, which contradicts the maximality of $A$. Suppose there exist $\beta_1, \beta_2 \in A$ such that $\beta_1$ intersects $\alpha'$ at least twice, and $\beta_2$ intersects $\alpha''$ at least twice. Since the geometric intersection number of the isotopy classes $[\beta_1]$ and $[\alpha']$ on $D_{n-1}$ is at most $1$, the arcs $\beta_1$ and $\alpha'$ must form at least one (half-)bigon on $D_{n-1}$, and $P$ lies in this (half-)bigon. Similarly, $\beta_2$ and $\alpha''$ form at least one (half-)bigon on $D_{n-1}$, with $P$ lying inside it.

If $\beta_1$ and $\alpha'$ form at least one (half-)bigon on $D_{n-1}$, we let $S_{\alpha'} \cup S_{\beta_1}$ be the boundary of the (half-)bigon, where $S_{\alpha'}$ is a segment of $\alpha'$ and $S_{\beta_1}$ is a segment of $\beta_1$. Then $S_{\beta_1}$ is in the region between $\alpha'$ and $\alpha''$, otherwise, $\beta_1$ and $\alpha''$ form a (half-)bigon on $D_{n}$, which contradicts to minimal position (see the second and the third figures in \cref{MaxAfterFillIsTaut}). Similarly, $S_{\beta_2}$ is in the region between $\alpha'$ and $\alpha''$.

We first consider the case when $\beta_1$ and $\alpha'$ form a bigon on $D_{n-1}$. Then, regardless of whether $\beta_2$ forms a bigon or a half-bigon with $\alpha''$, $\beta_1$ and $\beta_2$ must intersect at least twice. Similarly, $\beta_2$ and $\alpha''$ cannot form a bigon on $D_{n-1}$ (see the fourth figure in \cref{MaxAfterFillIsTaut}).

If both pairs $\beta_1, \alpha'$ and $\beta_2, \alpha''$ form only half-bigons, then $\beta_1$ and $\alpha'$ intersect exactly twice (since if the intersection number were greater than two, they would necessarily form a bigon on $D_{n-1}$). Similarly, $\beta_2$ and $\alpha''$ also intersect exactly twice. In this case, there are only two possibilities, and both lead to $\beta_1$ and $\beta_2$ intersecting at least twice (see the fifth and sixth figures in \cref{MaxAfterFillIsTaut}).

\end{proof}

\begin{lem}
\label{5 arcs P-parallel}
Let $A$ be a watermelon on $D_5$, and let $P$ be a puncture of $D_5$. Suppose that a $P$-reduced watermelon $A'$ of $A$ has cardinality $5$. Then $A$ is not maximal on $D_5$.
\end{lem}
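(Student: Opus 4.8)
The plan is to argue by contradiction and to reduce the whole statement to the cardinality count already available on $D_4 = D_5 \cup \{P\}$. Suppose $A$ were maximal on $D_5$. Since $5 \geq 3$, \cref{lem2: maximal system after filling is saturated} applies directly and tells us that the given $P$-reduced watermelon $A'$ is saturated on $D_4$. The entire argument then hinges on converting the word ``saturated'' into a statement about the number of arcs.

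The key observation I would record first is that, for watermelons, being saturated is the same as being maximal by inclusion. Indeed, suppose $A'$ were a proper sub-collection of some watermelon $B$ on $D_4$; choosing any arc $\alpha \in B \setminus A'$ we get $A' \cup \{\alpha\} \subseteq B$, and since every sub-collection of a watermelon is again a watermelon (all defining conditions --- essential, transverse, non-isotopic, and pairwise minimal position with intersection at most one --- are inherited by subsets), this exhibits an arc that can be added to $A'$, contradicting saturation. The converse implication is immediate. Hence a saturated watermelon on $D_4$ is exactly a maximal watermelon on $D_4$, and in particular $A'$ is maximal on $D_4$.

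Now I would simply invoke the cardinality formula. By \cref{cardinality of maximal watermelon}, every maximal watermelon on $D_4$ has exactly $\tfrac{1}{2}\cdot 4\cdot 3 = 6$ arcs, so $\# A' = 6$. This contradicts the hypothesis $\# A' = 5$. Therefore $A$ cannot be maximal on $D_5$, which is precisely the assertion.

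I do not expect a serious obstacle in the argument itself; the only points requiring care are the equivalence ``saturated $\Leftrightarrow$ maximal by inclusion'' and, underlying it, the uniform-cardinality fact that \emph{all} maximal watermelons on $D_n$ share the single value $\tfrac{1}{2}n(n-1)$. The latter is exactly what \cref{cardinality of maximal watermelon} supplies (through \cite{chen2024systems} and \cref{MaxWDisMaxLoops}), so the lemma is essentially a one-line corollary of \cref{lem2: maximal system after filling is saturated} combined with the count on $D_4$. If one wished to avoid the saturated/maximal identification, an alternative would be to use \cref{Uniqueness of maximal watermelon with n short arcs} to pin down the structure of a maximal watermelon on $D_4$ and exhibit an explicit arc that can be added to the $5$-arc system $A'$, but the cardinality route is cleaner and avoids any case analysis.
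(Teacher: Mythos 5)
Your opening move --- invoking \cref{lem2: maximal system after filling is saturated} to conclude that $A'$ is saturated on $D_4$ --- is exactly how the paper begins, and your observation that saturation coincides with inclusion-maximality (because sub-collections of watermelons are watermelons) is correct. The gap is in the step that converts ``saturated'' into ``has $6$ arcs''. \cref{cardinality of maximal watermelon} is proved in one line from \cite[Corollary 1.6]{chen2024systems} together with \cref{MaxWDisMaxLoops}; what that argument delivers is the value of the \emph{maximum} cardinality of a watermelon on $D_n$, i.e.\ the size of maximum-cardinality watermelons. It does not establish the purity statement that every watermelon to which no further arc can be added already attains this maximum. So on either reading of the word ``maximal'' your citation fails: if ``maximal'' means maximum cardinality, the proposition simply does not apply to a saturated $5$-arc watermelon; if ``maximal'' means inclusion-maximal, then the proposition's one-line proof is silently assuming the very equivalence (inclusion-maximal $\Rightarrow$ maximum cardinality) that your argument needs, and nothing in the paper proves it. Indeed, the present lemma is precisely one of the places where that equivalence is being \emph{proved} (for $5$-arc watermelons on $D_4$), so your ``one-line corollary'' is circular in spirit.

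This is why the paper's own proof is not short: it takes the saturated $P$-reduced watermelon $A'$ with $5$ arcs, notes via \cref{arc = punctures partition} that at most $3$ of them are non-short, and then in each of the three cases ($4$, $3$, or $2$ short arcs) either exhibits an explicit arc that can be added to $A'$ --- contradicting saturation, hence contradicting maximality of $A$ through \cref{lem2: maximal system after filling is saturated} --- or derives a contradiction with the watermelon axioms (two non-short arcs would have to cross at least twice to fence off a puncture). That case analysis is exactly the missing bridge from ``no arc can be added'' to ``must have $6$ arcs'' which your proposal treats as already known; if your shortcut were available, this lemma and its proof would be redundant, and the authors would not have needed the notion of saturation at all. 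To repair your argument you would have to prove the purity statement for watermelons on $D_4$ yourself, which essentially amounts to reproducing the paper's case analysis.
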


\begin{proof}
By \cref{arc = punctures partition}, the number of non-short arcs in $A'$ is at most $3$. Hence, we only need to consider the cases where $A'$ contains $4$, $3$, or $2$ short arcs on $D_4$.

\begin{itemize}
\item Suppose there are exactly $4$ short arcs in $A'$. By \cref{watermelon with n short arcs is sub-watermelon of the standard max one}, $A'$ is a sub-watermelon of the standard maximal watermelon on $D_4$. Since $A'$ contains only five arcs while the standard maximal watermelon on $D_4$ contains six, it follows that $A'$ is not saturated on $D_4$. Therefore, by \cref{lem2: maximal system after filling is saturated}, $A$ is not maximal on $D_5$.
\item Suppose there are exactly $3$ short arcs in $A'$ (i.e., $A'$ contains exactly $2$ non-short arcs). If there exists a path connecting the puncture $Q$ and $\partial D_4$ that is disjoint from all arcs in $A'$, then the boundary of a sufficiently small regular neighborhood of this path, denoted by $\beta$, is a short arc that isolates $Q$ and is disjoint from all arcs in $A'$. Therefore, $A' \cup \{\beta\}$ is a watermelon on $D_4$, which implies that $A'$ is not saturated. By \cref{lem2: maximal system after filling is saturated}, it follows that $A$ is not maximal. If there does not exist a path connecting the puncture $Q$ to $\partial D_4$, this means that $Q$ must be fenced off by the two non-short arcs in $A'$ (see the left picture in \cref{ExactlyThreeOrTwoShortArcsInAPrime}). Consequently, the intersection number between these two non-short arcs must be greater than $1$, which contradicts the fact that $A'$ is a watermelon.

\begin{figure}[H]
\centering
\includegraphics[width=1.0\textwidth]{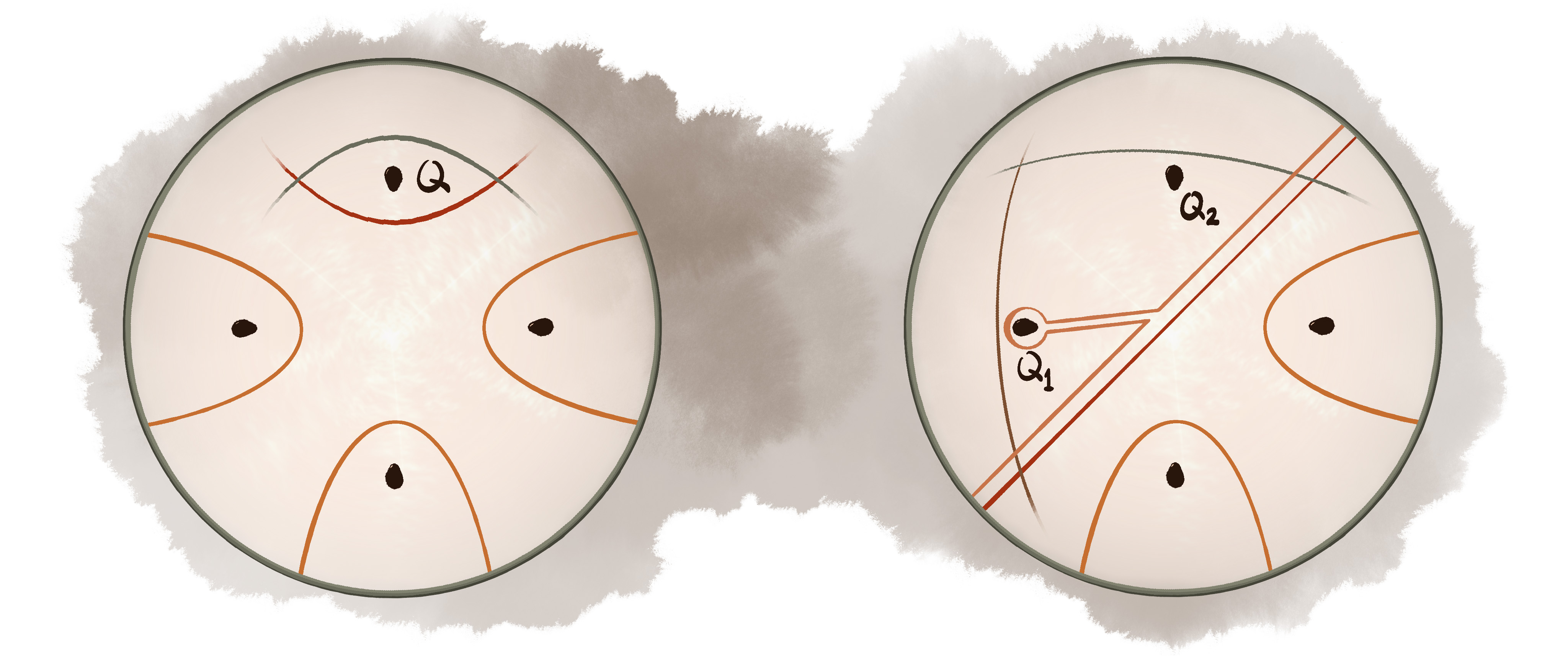}
\caption{The left picture shows that $Q$ is fenced off by two non-short arcs, while the right picture shows that $Q_1$ and $Q_2$ are fenced off by the three non-short arcs.}
\label{ExactlyThreeOrTwoShortArcsInAPrime}
\end{figure}

\item Suppose there are exactly $2$ short arcs in $A'$ (i.e., $A'$ contains exactly $3$ non-short arcs). Then both of the punctures $Q_1$ and $Q_2$ of $D_4$ that are not isolated by short arcs must be fenced off by the three non-short arcs; otherwise, as before, we could add a short arc isolating $Q_1$ or $Q_2$, which would imply that $A'$ is not saturated, and hence $A$ is not maximal. In fact, if these three non-short arcs enclose one or more regions whose boundaries do not contain any segment of $\partial D_4$, the only possibility is a single triangle. In this case, each pair of the three arcs intersects exactly once, and both $Q_1$ and $Q_2$ lie inside this triangle. We can then construct an additional arc that runs close and parallel to one of the non-short arcs outside the triangle and separates $Q_1$ and $Q_2$ within the triangle (see the right picture in \cref{ExactlyThreeOrTwoShortArcsInAPrime}). This again implies that $A'$ is not saturated, and thus $A$ is not maximal.
\end{itemize}
\end{proof}

\begin{lem}
\label{lem: $P$-source set has short arc}
Let $A$ be a maximal watermelon on $D_5$, and let $P$ be a puncture of $D_5$ that is not isolated by any short arc in $A$. Let $A'$ be a $P$-reduced watermelon of $A$. Then for any short arc $\alpha'$ in $A'$ on $D_4$, its $P$-source set (see \cref{defi: $P$-reduced watermelons}) must contain one arc that is a short arc on $D_5$.
\end{lem}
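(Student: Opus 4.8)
The plan is to argue by contradiction with the maximality of $A$: assuming some short arc $\alpha'$ of $A'$ isolating a puncture $Q$ has a $P$-source set containing no short arc of $D_5$, I will produce a new short arc that can be adjoined to $A$, contradicting \cref{cardinality of maximal watermelon}.

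First I would classify the arcs of $A$ that can lie in the $P$-source set of $\alpha'$. Writing $D_4 := D_5 \cup \{P\}$, any such arc $\eta$ satisfies $\eta \simeq \alpha'$ on $D_4$, so after deleting $P$ it isolates $Q$; hence on $D_5$ it either isolates $\{Q\}$ (a short arc, ``type (a)'') or separates $\{Q,P\}$ from the other three punctures (a non-short arc, ``type (b)''). By injectivity of $\operatorname{Par}$ (\cref{arc = punctures partition}), $A$ contains at most one arc of each type, so by \cref{rmk: each $P$-source set contains either one or two elements} the source set already contains a short arc unless it is a singleton $\{\beta\}$ with $\beta$ of type (b); in that case $A$ contains no short arc isolating $Q$ at all. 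This is the only case to rule out.

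Next I would construct the candidate arc. Let $\gamma$ be the type-(a) arc isolating $Q$ that is $P$-parallel to $\beta$; by (the proof of) \cref{lem: fill point at most two arcs are isotopic}, $\gamma$ may be taken disjoint from $\beta$, the two cobounding a region $B$ whose only puncture is $P$. Writing $R$ for the region cut off by $\beta$ that contains $\{Q,P\}$, we have $\gamma\subset R$, with $\gamma$ isolating $Q$ inside $R$ and $P$ trapped in the strip $B$ between $\gamma$ and $\beta$. The key numerical input is that $\alpha'$ is a short arc of the watermelon $A'$, so by \cref{short arcs are disjoint to any others} it is disjoint from every other arc of $A'$; since $\gamma \simeq \alpha'$ on $D_4$, this gives $i(\gamma,\delta)=0$ on $D_4$ for every $\delta\in A\setminus\{\beta\}$. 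Because filling $P$ can only remove intersections through bigons or half-bigons enclosing the single puncture $P$, it follows that $i(\gamma,\delta)\le 2$ on $D_5$, with equality exactly when $\gamma$ and $\delta$ cobound a bigon $D$ containing $P$ (a bigon containing $Q$ would survive filling and violate $i(\gamma,\delta)=0$ on $D_4$).

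The heart of the argument — and the step I expect to be the main obstacle — is excluding this $P$-bigon $D$. I would split into two cases. If $\beta$ meets the interior of $D$, then since $\beta$ is disjoint from $\gamma$ it crosses the $\delta$-part of $\partial D$ twice, forcing $i(\beta,\delta)\ge 2$ on $D_5$ and contradicting that $A$ is a $1$-system. If $\beta$ avoids $D$, then the $\delta$-arc of $\partial D$ separates $P$ from $\beta$ within $B$; tracing $\delta$ beyond its two crossings with $\gamma$ shows it re-enters the $Q$-side of $\gamma$, never meets $\beta$, and ends on $\partial D_5\cap R$, so $\delta$ lies entirely in $R$ and is disjoint from $\beta$. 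But such an arc would have to isolate $Q$, isolate $P$, be isotopic to $\beta$, or be inessential — all impossible — so this case cannot occur either. Hence $i(\gamma,\delta)\le 1$ on $D_5$ for every $\delta\in A$ (and $i(\gamma,\beta)=0$). Since $\gamma$ is essential and, by the absence of any short arc isolating $Q$ together with \cref{arc = punctures partition}, non-isotopic to every arc of $A$, the family $A\cup\{\gamma\}$ is a watermelon of cardinality $\tfrac12\cdot 5\cdot 4+1$, contradicting \cref{cardinality of maximal watermelon}. This contradiction shows the $P$-source set of $\alpha'$ must contain a short arc of $D_5$.
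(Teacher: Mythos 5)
Your overall strategy is coherent, and its first half checks out: the classification of the $P$-source set of $\alpha'$ into a type (a) arc (short on $D_5$, isolating $Q$) and a type (b) arc (separating $\{P,Q\}$ from the rest), the reduction via \cref{arc = punctures partition} to the bad case of a singleton $\{\beta\}$ of type (b), the construction of the push-off $\gamma$, and the deduction from \cref{short arcs are disjoint to any others} that $i(\gamma,\delta)=0$ on $D_4$ for every $\delta\in A\setminus\{\beta\}$ are all correct. The gap is your ``key numerical input'': the claim that $i(\gamma,\delta)=0$ on $D_4$ forces $i(\gamma,\delta)\le 2$ on $D_5$, with equality only when $\gamma$ and $\delta$ cobound a single bigon containing $P$. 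This does not follow from the bigon criterion, because removing intersections on $D_4$ is an \emph{iterative} process: after isotoping $\delta$ across one $P$-containing (half-)bigon, new (half-)bigons containing $P$ appear, so a single puncture can account for an arbitrarily large drop in intersection number. Concretely, label the remaining three punctures $R_1,R_2,R_3$, let $c$ be a simple loop encircling exactly $P$ and $Q$ and disjoint from $\beta$, and let $\delta_1$ be an arc disjoint from $\gamma$ separating $\{P,R_1\}$ from $\{Q,R_2,R_3\}$, so that $i(\delta_1,c)=i(\gamma,c)=2$. On $D_4$ the loop $c$ bounds a once-punctured disk, so the Dehn twist $T_c$ is isotopically trivial there; hence $T_c^k(\delta_1)\simeq\delta_1$ on $D_4$, so $i_{D_4}(\gamma,T_c^k(\delta_1))=0$, while $T_c^k(\delta_1)$ isolates neither $P$ nor $Q$ and satisfies $i(T_c^k(\delta_1),\beta)=i(\delta_1,\beta)=1$. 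Yet the standard twist inequality $\bigl|\,i(T_c^k(a),b)-|k|\,i(a,c)\,i(b,c)\,\bigr|\le i(a,b)$ gives $i_{D_5}(\gamma,T_c^k(\delta_1))=4|k|$. So an arc satisfying \emph{every} property you have established up to that point can meet $\gamma$ arbitrarily often, and your bound cannot be deduced from those properties alone. (A secondary omission: even granting $i_{D_5}=2$, the obstruction on $D_4$ may be a half-bigon at $\partial D_5$ containing $P$ rather than a bigon, a configuration your exclusion argument never treats.)

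What is missing is precisely the hard core of the lemma: one must use the \emph{other} arcs of the maximal watermelon to rule out arcs such as $T_c^k(\delta_1)$ occurring in $A$ at all. This is what the paper's proof does. Rather than bounding $i(\gamma,\delta)$, it argues that since neither $P$ nor $Q$ is isolated by a short arc of $A$, maximality forces arcs of $A$ to fence off $P$ and $Q$ inside $F_{\alpha,1}$ (otherwise a short arc could be adjoined, as in your plan), and then a region-by-region analysis --- classifying the strands of $A$-arcs in the two sub-regions $F_1$, $F_2$ into the six types $\delta_{i,j},\delta'_{i,j}$ of \cref{AtLeastFourShortArcsWhenNIsFive}, and forcing the existence of the arcs $\beta_2$ and $\beta_3$ --- shows that two arcs of $A$ itself would have to intersect at least twice or three times, contradicting that $A$ is a $1$-system. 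Some argument of this kind, exploiting all of $A$ and not just $\beta$, is needed to close your gap; as written, the proposal's central inequality is false.
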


\begin{proof}
We proceed by contradiction. Suppose that none of the arcs in the $P$-source set of $\alpha'$ are short on $D_5$. Let $\alpha$ be an arbitrary arc in the $P$-source set. Denote by $Q$ the puncture that $\alpha'$ isolates on $D_4$. Since $\alpha$ and $\alpha'$ are isotopic on $D_4$, and $\alpha'$ is short on $D_4$, it follows that $\alpha$ is also short on $D_4$. Therefore, $\alpha$ isolates $Q$ on $D_4$. The arc $\alpha$ divides $D_4$ into two components. Let $F_{\alpha,1}$ denote the component that contains exactly the puncture $Q$. Moreover, since $\alpha$ is not short on $D_5$, the puncture $P$ must lie in $F_{\alpha,1}$ (see \cref{AtLeastFourShortArcsWhenNIsFive}).

First, we claim that there is no arc in $A$ entirely contained in $F_{\alpha,1}$. Otherwise, such an arc would either be isotopic to $\alpha$ (which is impossible since $A$ is a watermelon and all its arcs are pairwise non-isotopic), or it would isolate either $P$ or $Q$. The latter is also ruled out: by the assumptions of the lemma, $A$ contains no short arc that isolates $P$; and by the hypothesis of contradiction, $A$ contains no short arc that isolates $Q$ either.

Secondly, we claim that there must exist some arcs in $A$ that intersect $F_{\alpha,1}$ (but are not entirely contained in it), and that, together with $\alpha$, fence off $P$ and $Q$. Indeed, otherwise we could find a short arc $\alpha^{\diamond}$ entirely contained in $F_{\alpha,1}$ that isolates either $P$ or $Q$, so that $A \cup {\alpha^{\diamond}}$ would form a larger watermelon, contradicting the maximality of $A$.

Let $\beta_1 \in A$ be an arc that intersects $F_{\alpha,1}$ but is not entirely contained in it. Then $\beta_1$ intersects $\alpha$ exactly once, and the endpoint of $\beta_1$ that lies in $F_{\alpha,1}$ is located on the boundary segment $F_{\alpha,1} \cap \partial D_5$.

Furthermore, $\beta_1$ must separate $P$ and $Q$; otherwise, $\beta_1$ and $\alpha$ would form a half-bigon within $F_{\alpha,1}$, contradicting the minimal position of arcs in $A$, since $A$ is a watermelon. Therefore, up to watermelon equivalence, we may always draw $\beta_1$ in the configuration shown in \cref{AtLeastFourShortArcsWhenNIsFive}.

However, the arc $\beta_1$ alone does not suffice to fence off the punctures $P$ and $Q$. Nevertheless, it serves as a canonical reference in the diagram, allowing the subsequent analysis to proceed relative to it. The arc $\beta_1$ divides $F_{\alpha,1}$ into two regions, which we denote by $F_1$ and $F_2$, with $Q \in F_1$ and $P \in F_2$. Let $S_1, \dots, S_5$ denote the boundary segments that enclose $F_1$ and $F_2$, as shown in the left picture in \cref{AtLeastFourShortArcsWhenNIsFive}. Topologically, both $F_1$ and $F_2$ are punctured disks.

\begin{figure}[H]
\centering
\includegraphics[width=1.0\textwidth]{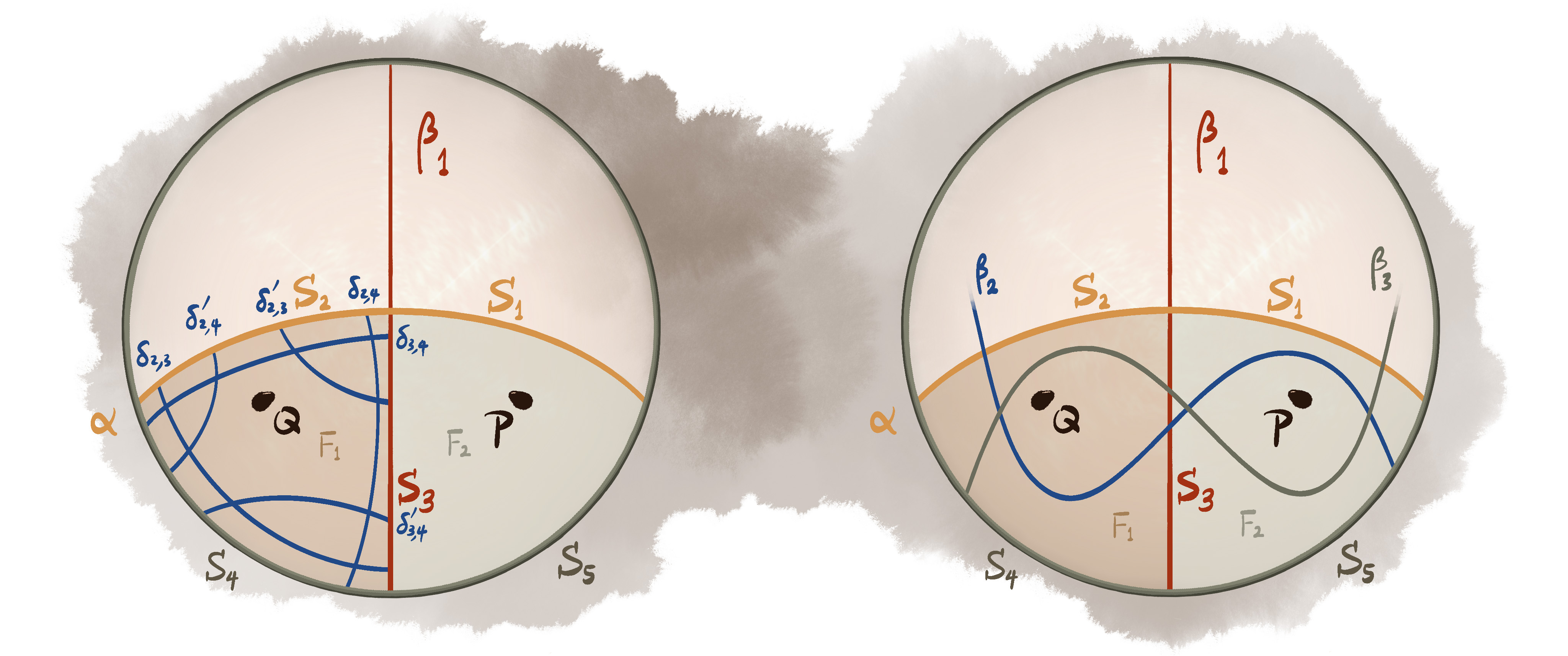}
\caption{The picture for the proof by contradiction. We aim to prove that for any $i, i=1,2,3,4$, there exists an arc in $A$ that is isotopic to $\alpha_i$ on $D_4$, and it is a short arc on $D_5$.}
\label{AtLeastFourShortArcsWhenNIsFive}
\end{figure}

We denote $\Delta_{Q} = \{\delta \mid \delta \text{ is a connected component of } \alpha \cap F_1, \text{ where } \alpha \in A \}$. Clearly, $\Delta_{Q}$ consists of arcs contained in $F_1$, and for any $\delta \in \Delta_{Q}$, both endpoints lie on the boundary $\partial F_1 = S_2 \cup S_3 \cup S_4$. Note that the two endpoints of $\delta$ cannot both lie on $S_2$, since $\delta$ and $\alpha$ cannot intersect more than once. Similarly, the endpoints cannot both lie on $S_3$. Moreover, the endpoints cannot both lie on $S_4$, for otherwise $\delta$ would either be a short arc isolating $Q$ or null-isotopic, both of which lead to a contradiction. In summary, the endpoints of $\delta$ lie on two distinct segments among $S_2$, $S_3$, and $S_4$. Considering isotopies that keep the endpoints of $\delta$ on their respective segments (allowing the endpoints to move freely along these segments), the isotopy type of $\delta$ is limited to exactly six possibilities, as shown in the left picture of \cref{AtLeastFourShortArcsWhenNIsFive}: $\{ \delta_{2,3}, \delta'_{2,3}, \delta_{2,4}, \delta'_{2,4}, \delta_{3,4}, \delta'_{3,4} \}$. This enumeration follows from the fact that $F_1$ is a punctured disk.

We prove that $\Delta_{Q}$ must contain an arc $\delta$ that is isotopic to $\delta_{2,3}$. First, $\delta'_{2,4}$ and $\delta'_{3,4}$ cannot exist because they would each respectively form a half-bigon with $\alpha$ and $\beta_1$. Second, since $\delta_{2,4}$ and $\delta_{3,4}$ intersect at most once, these two arcs cannot fence off $Q$. Therefore, if there were no arcs of type $\delta_{2,3}$ in $\Delta_{Q}$, we could add a short arc isolating $Q$ from $A$, which leads to a contradiction. We denote by $\beta_2$ the arc in $A$ that contains the $\delta$ isotopic to $\delta_{2,3}$.

We analyze the possible configurations of $\beta_2$ within $F_2$. Since $\beta_2$ already intersects $\alpha$ and $\beta_1$ exactly once each, after passing through $S_3$ into $F_2$, $\beta_2$ cannot exit through $S_1$, nor can it intersect $S_3$ again. Therefore, the intersection $\beta_2 \cap F_2$ must be an arc with endpoints lying on $S_3$ and $S_5$, and it has two possible isotopy types: isotopic to either $\delta_{3,5}$ or $\delta'_{3,5}$ (although not explicitly labeled in the figure, these are similar to the cases in $F_1$). The case where $\beta_2$ is isotopic to $\delta'_{3,5}$ is impossible because $\beta_2$ and $\beta_1$ would form a half-bigon. Hence, $\beta_2$ must be as illustrated in the right picture of \cref{AtLeastFourShortArcsWhenNIsFive}.

Performing a similar analysis on $F_2$, in order to fence off $P$, there must be an arc in $A$, denoted by $\beta_3$, as illustrated in the right picture of \cref{AtLeastFourShortArcsWhenNIsFive}. Consequently, $\beta_2$ and $\beta_3$ intersect at least three times, which contradicts the fact that $A$ is a watermelon.
\end{proof}

\begin{lem}
\label{6 arcs P-parallel}
Let $A$ be a maximal watermelon on $D_5$, and let $P$ be a puncture of $D_5$. Suppose that a $P$-reduced watermelon $A'$ of $A$ is the standard maximal watermelon on $D_4$. Then $A$ is watermelon equivalent to the one shown in the right picture of \cref{FourShortArcsWhenNIsFive}.
\end{lem}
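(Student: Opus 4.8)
The plan is to reconstruct $A$ from its $P$-reduction $A'$ by a counting argument together with a careful analysis of where the puncture $P$ can sit relative to the (rigid) standard maximal watermelon $A'$.

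First I would set up the bookkeeping. Partition $A$ into $P$-parallelism classes as in \cref{defi: $P$-reduced watermelons}: there is one (possibly empty) class consisting of the arcs of $A$ that isolate $P$, which become null-isotopic on $D_4$ and are discarded in the reduction, together with one source set $A_i$ lying over each arc of $A'$. Let $e$ be the number of arcs isolating $P$ and let $d$ be the number of source sets of size two (the \emph{doubled} arcs of $A'$). By \cref{rmk: each $P$-source set contains either one or two elements} every $A_i$ has one or two elements, and since any two arcs isolating $P$ are $P$-parallel, \cref{lem: fill point at most two arcs are isotopic} gives $e \le 2$. As $A'$ is the standard maximal watermelon it has $6$ arcs, while $\#A = \tfrac12\cdot 5\cdot 4 = 10$ by \cref{cardinality of maximal watermelon}; counting arcs yields $10 = e + 6 + d$, hence $d + e = 4$.

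Next I would localize $P$. By the proof of \cref{lem: fill point at most two arcs are isotopic}, two $P$-parallel arcs are disjoint and bound a region containing $P$ and no other puncture, so an arc $\alpha'$ of $A'$ can be doubled only if $P$ may be isotoped adjacent to it, i.e. only if $\alpha'$ lies on the boundary of the face of $D_4\setminus A'$ containing $P$. I would therefore record the face structure of the standard maximal watermelon: the four short arcs cut off four ``petal'' faces (each carrying one puncture $Q_i$), while the two non-short arcs cross once and cut the remaining central disk into four quadrants, each bounded by a single short arc and the two non-short arcs. Using the order-four cyclic symmetry of $A'$ (which permutes the $Q_i$ and interchanges the two non-short arcs), the position of $P$ reduces to finitely many cases indexed by its face. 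The heart of the argument is then to run $d+e=4$ against each case: a petal face is bounded by a single arc, forcing $d\le 1$ and so $e\ge 3$, contradicting $e\le 2$; hence $P$ lies in a central quadrant, and within it one determines exactly which of the three bounding arcs may be doubled \emph{simultaneously} and how many short arcs isolating $P$ may be adjoined while $A$ stays a $1$-system. Here the decisive tools are the dictionary between arcs and bipartitions of the punctures (\cref{arc = punctures partition}) and the disjointness of short arcs from everything (\cref{short arcs are disjoint to any others}). Eliminating all but one pattern pins $A$ down, by \cref{arc = punctures partition} and \cref{Uniqueness of maximal watermelon with n short arcs}, up to watermelon equivalence, and it coincides with the right picture of \cref{FourShortArcsWhenNIsFive}.

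The main obstacle I anticipate is precisely this last step: verifying the minimal-position condition for the doubled and lifted arcs. The subtlety is that two arcs whose bipartitions are \emph{not} nested may nonetheless be realizable crossing only once, or instead be forced to cross twice, depending on where $P$ sits cyclically among $Q_1,\dots,Q_4$; so each candidate doubling pattern must be tested against the actual geometric intersection number rather than against the bipartitions alone, while simultaneously confirming that exactly one face and one pattern attain the full cardinality $10$. Ruling out the patterns that look locally admissible but globally violate minimal position (or fall short of $10$ arcs) is where the figures carry the real weight and where the care is required.
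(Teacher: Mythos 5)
Your counting step is sound ($10 = e + 6 + d$, hence $d+e=4$; in fact $e\le 1$, since two arcs isolating $P$ induce the same bipartition of the punctures and are therefore isotopic by \cref{arc = punctures partition}, so they cannot coexist in $A$). The proof breaks at the localization step: the claim that an arc $\alpha'$ of $A'$ can be doubled only if $\alpha'$ bounds the face of $D_4\setminus A'$ containing $P$ is false, and the target configuration itself is the counterexample. In the watermelon of the right picture of \cref{FourShortArcsWhenNIsFive}, \emph{four} source sets are doubled: those of two \emph{distinct, adjacent short} arcs $\alpha'_2,\alpha'_3$, and those of both non-short arcs. But no face of the complement of the standard maximal watermelon on $D_4$ is bounded by more than three arcs (the petal faces touch one short arc; the central quadrants touch one short arc and the two non-short arcs), and no face touches two distinct short arcs. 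So your criterion caps $d$ at $3$, forces $e\ge 1$, and your case analysis would ``eliminate'' precisely the configuration the lemma asserts is the answer (which has $d=4$, $e=0$). The underlying error is that ``the face of $D_4\setminus A'$ containing $P$'' is not well defined: passing from $A$ to the reduced representatives necessarily isotopes one arc of each doubled pair \emph{across} $P$, so $P$'s side of a doubled arc is ambiguous; equivalently, the partner of a doubled arc need not stay near the reduced representative but can wind around $P$ far from it (the lift of $\alpha'_3$ cutting off $\{Q_3,P\}$ reaches across the central region to capture $P$ wherever $P$ sits).

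What actually pins down the doubling pattern is not face adjacency but intersection numbers, computed using the cyclic order of the petals forced by standardness of $A'$ --- and this is exactly the paper's route. The paper first applies \cref{lem: $P$-source set has short arc} to produce four short arcs of $A$ isolating $Q_1,\dots,Q_4$ (note this silently uses that no arc of $A$ isolates $P$, i.e.\ $e=0$; that hypothesis is how the lemma is invoked in \cref{Thm: Uniqueness when n = 5}, and without it the stated conclusion even fails, since the standard maximal watermelon on $D_5$ also $P$-reduces to the standard one on $D_4$ yet has five short arcs). Then $10=6+d$ gives $d=4$, and the decisive geometric fact is that lifts doubling two \emph{opposite} short arcs --- arcs cutting off $\{Q_1,P\}$ and $\{Q_3,P\}$ --- must cross \emph{twice} given the petal order $Q_1,Q_2,Q_3,Q_4$, whereas lifts doubling adjacent short arcs can cross once. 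This forces exactly one doubled arc in each opposite pair $\{\alpha'_1,\alpha'_3\}$ and $\{\alpha'_2,\alpha'_4\}$, plus both non-short arcs, which is the figure's watermelon. Your final paragraph correctly anticipates that realizability must be tested against genuine geometric intersection numbers rather than bipartitions alone, but the face-adjacency filter you impose beforehand discards the correct answer, so the outline cannot be repaired without replacing that step by an argument of this type.
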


\begin{proof}
Since $A'$ is the standard maximal watermelon on $D_4$, we write $A' = \{\alpha'_1, \cdots, \alpha'_6\}$, where $\alpha'_1, \cdots, \alpha'_4$ are the four short arcs on $D_4$. By \cref{lem: $P$-source set has short arc}, $A$ correspondingly contains four short arcs on $D_5$, denoted by $\alpha_1, \cdots, \alpha_4$, each respectively lying in the $P$-source set of $\alpha'_1, \cdots, \alpha'_4$. These arcs respectively isolate the punctures $Q_1, \cdots, Q_4$. We assume $Q_1$ and $Q_3$ are ``opposite'', and $Q_2$ and $Q_4$ are ``opposite'' (see \cref{FourShortArcsWhenNIsFive}).

\begin{figure}[H]
\centering
\includegraphics[width=1.0\textwidth]{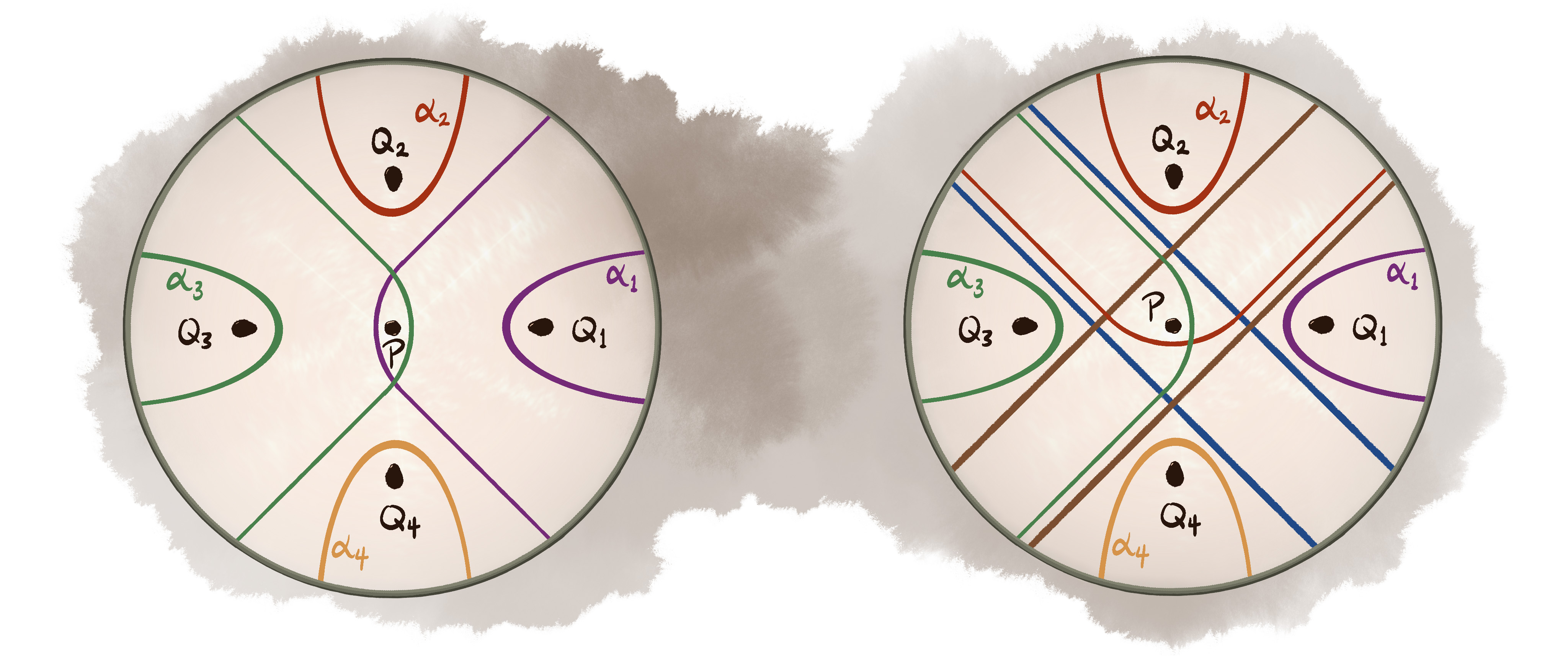}
\caption{This is a maximal watermelon on $D_5$, which has four short arcs.}
\label{FourShortArcsWhenNIsFive}
\end{figure}

Consequently, $D_5 \setminus \cup_{i=1}^{4} F_{\alpha_i,1}$ is topologically a punctured disk, with $P$ as the puncture. Therefore, there is exactly one arc in $A$ that is $P$-parallel to either $\alpha_1$ or $\alpha_3$. Otherwise, suppose there are two such arcs in $A$, one $P$-parallel to $\alpha_1$ and the other $P$-parallel to $\alpha_3$. Then they respectively isolate the pairs $(Q_3, P)$ and $(Q_1, P)$, and must intersect twice (as shown in the left picture of \cref{FourShortArcsWhenNIsFive}), which contradicts the fact that $A$ is a watermelon. On the other hand, suppose that there is no arc in $A$ that is $P$-parallel to $\alpha_1$ or to $\alpha_3$. Then, by \cref{lem: fill point at most two arcs are isotopic}, the set $A$ is partitioned into six $P$-source sets, with the $P$-source sets of $\alpha'_1$ and $\alpha'_3$ each containing only one arc. Since $|A| = 10$, the remaining four $P$-source sets must each contain exactly two arcs. Thus, there must exist two arcs in $A$ such that one is $P$-parallel to $\alpha_2$ and the other is $P$-parallel to $\alpha_4$, and similarly, these two arcs would intersect twice, again yielding a contradiction.

Similarly, there is exactly one arc in $A$ that is $P$-parallel to either $\alpha_2$ or $\alpha_4$. Without loss of generality, we assume that the $P$-source sets of $\alpha'_2$ and $\alpha'_3$ each have cardinality $2$, and that the $P$-source sets of $\alpha'_1$ and $\alpha'_4$ each have cardinality $1$. Consequently, the remaining two $P$-source sets, corresponding to $\alpha'_5$ and $\alpha'_6$, must also each have cardinality $2$, and hence $A$ is equivalent to the watermelon as depicted in the right picture of \cref{FourShortArcsWhenNIsFive}.
\end{proof}

\begin{lem}
\label{n=5 equivalent}
Let $A$ denote the watermelon shown in the right picture of \cref{FourShortArcsWhenNIsFive}. Then the associated complete $1$-system of loops $L_A$ is equivalent to the standard maximal complete $1$-system of loops $L^s$ on $D_5$.
\end{lem}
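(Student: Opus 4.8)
The plan is to imitate the argument used for $n = 4$ (\cref{Thm: Uniqueness when n = 4}): since the mapping class group of $N_{1,5}$ is generated by half twists and puncture slides (\cref{MCG generators}), while watermelon equivalence only records the orientation-preserving homeomorphisms of $D_5$, a single puncture slide is exactly the extra operation needed to pass from the non-standard watermelon $A$ — which has only four short arcs — to the standard watermelon $A^s$, which has five. Concretely, I will exhibit a puncture slide of $P$ carrying $L_A$ to a system whose associated watermelon has all five short arcs, and then invoke the uniqueness result for five-short-arc watermelons.

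First I would write down $L_A$ explicitly by applying the correspondence $f_2$ of \cref{thm: watermelons and marked loops systems} to the watermelon $A$ in the right picture of \cref{FourShortArcsWhenNIsFive}. The four short arcs $\alpha_1, \dots, \alpha_4$ isolating $Q_1, \dots, Q_4$ yield four of the loops, while the six remaining arcs (organized into the $P$-source sets described in \cref{6 arcs P-parallel}) yield the others, giving $L_A$ its full complement of $\tfrac12 \cdot 5 \cdot 4 = 10$ loops together with the mark $\gamma_0 = \partial D_5 / {\sim}$. I would then perform the puncture slide of $P$ — the unique puncture not isolated by any short arc of $A$ (see \cref{Defi: PunctureSlides}) — on the loops $\gamma_i = \gamma(\alpha_i)$, exactly in the spirit of \cref{UniquenessWhenNIsFour}, and translate the resulting loop system back into a watermelon $\tilde{A}$ via the inverse map $f_1$. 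The slide should straighten the non-short arcs surrounding $P$ into a short arc isolating $P$, so that $\tilde{A}$ contains five short arcs. Granting this, \cref{Uniqueness of maximal watermelon with n short arcs} (with $n = 5$) shows that $\tilde{A}$ is watermelon-equivalent to $A^s$; hence the image of $L_A$ under the slide is mark-equivalent to $L^s = L_{A^s}$, and forgetting the mark yields $L_A \sim L^s$ under the mapping class group, as desired.

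The main obstacle is the bookkeeping in the middle step: faithfully tracing the puncture slide of $P$ through the antipodal identification on $\partial D_5$ and checking that it converts the four non-short arcs adjacent to $P$ into a single short arc while leaving the four existing short arcs intact up to isotopy. This is a concrete diagram computation rather than a conceptual difficulty, and — as in the $n = 4$ case — an accompanying figure depicting the slide applied to the loops $\gamma_i$ would make the verification transparent.
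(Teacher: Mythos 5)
Your high-level strategy (apply a puncture slide, then invoke \cref{Uniqueness of maximal watermelon with n short arcs}) is the same family of moves the paper uses, but the specific step your proof hinges on is false: sliding $P$, the puncture not isolated by any short arc, cannot create a fifth short arc. The number of short arcs of the watermelon associated to a marked system $(L,\overrightarrow{\gamma})$ equals $\#S(\gamma)$, the number of relative short loops of the mark (\cref{defi: Sets of relative short loops}), and this number is preserved when any mapping class is applied to the system together with its mark (\cref{lem: delta_2 never changed}, \cref{cor: S varphi gamma is varphi S gamma}). Since $\#S(\gamma_0)=4$ for $(L_A,\overrightarrow{\gamma_0})$, the watermelon $\tilde{A}$ you obtain after the slide still has exactly four short arcs, no matter which slide of $P$ you use. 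You can see this concretely in $\mathbb{Z}_2$-homology: writing $q_1,\dots,q_4,p$ for the puncture classes, any slide of $P$ adds $p$ to the class of every ($1$-sided) loop, so the mark $\gamma_0$ moves to class $g_0+p$, the four old short loops move to $g_0+q_i+p$ (still the only four loops at $\delta_2$-distance one from the new mark), while the loops enclosing $\{Q_2,P\}$ and $\{Q_3,P\}$ move to $g_0+q_2$ and $g_0+q_3$, which are at distance two from the new mark; no loop at distance one ``isolating $P$'' appears. A decisive sanity check is that your argument never uses $n=5$: it would apply verbatim to the maximal watermelons with $n-1$ short arcs constructed in \cref{Thm: Existence of max WD with n-1 short arcs} for every $n\geq 6$, contradicting \cref{thm: NonUniquenessNGeqSix}. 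So the failure is conceptual, not a matter of diagram bookkeeping.

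The repair requires changing which loop plays the role of the cross-cap boundary, and this is exactly what the paper's proof does: it performs puncture slides of $Q_1$ and $Q_4$, the two punctures whose $P$-source sets are singletons in \cref{6 arcs P-parallel}, and verifies pictorially (\cref{UniquenessWhenNIsFive}) that the slid system is $L^s$. Homologically, these two slides add $q_1+q_4$ to every class, so the loop coming from the arc with partition $\{Q_1,Q_4\}\mid\{Q_2,Q_3,P\}$ is carried to the class $g_0$, and the five loops coming from the arcs with partitions $\{Q_1\}$, $\{Q_4\}$, $\{Q_2,P\}$, $\{Q_3,P\}$, $\{Q_2,Q_3\}$ acquire singleton classes, matching the table of $L^s$ after half twists. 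Equivalently, in your framing: the equivalence $L_A\sim L^s$ cannot send $\gamma_0$ to $\gamma_0$ (their $\#S$ values are $4$ and $5$); instead one must re-mark $L_A$ at the loop $\beta$ associated to the $\{Q_1,Q_4\}\mid\{Q_2,Q_3,P\}$ arc, the unique loop of $L_A$ with $\#S(\beta)=5$. The watermelon of $(L_A,\overrightarrow{\beta})$ then has five short arcs, and \cref{Uniqueness of maximal watermelon with n short arcs} finishes the proof --- note that with this re-marking no puncture slide is needed at all.
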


\begin{proof}
We perform puncture slides of $Q_1$ and $Q_4$ for $L_A$ as in \cref{UniquenessWhenNIsFive}, so that $L_A$ becomes equivalent to $L^s$.

\begin{figure}[H]
\centering
\includegraphics[width=1.0\textwidth]{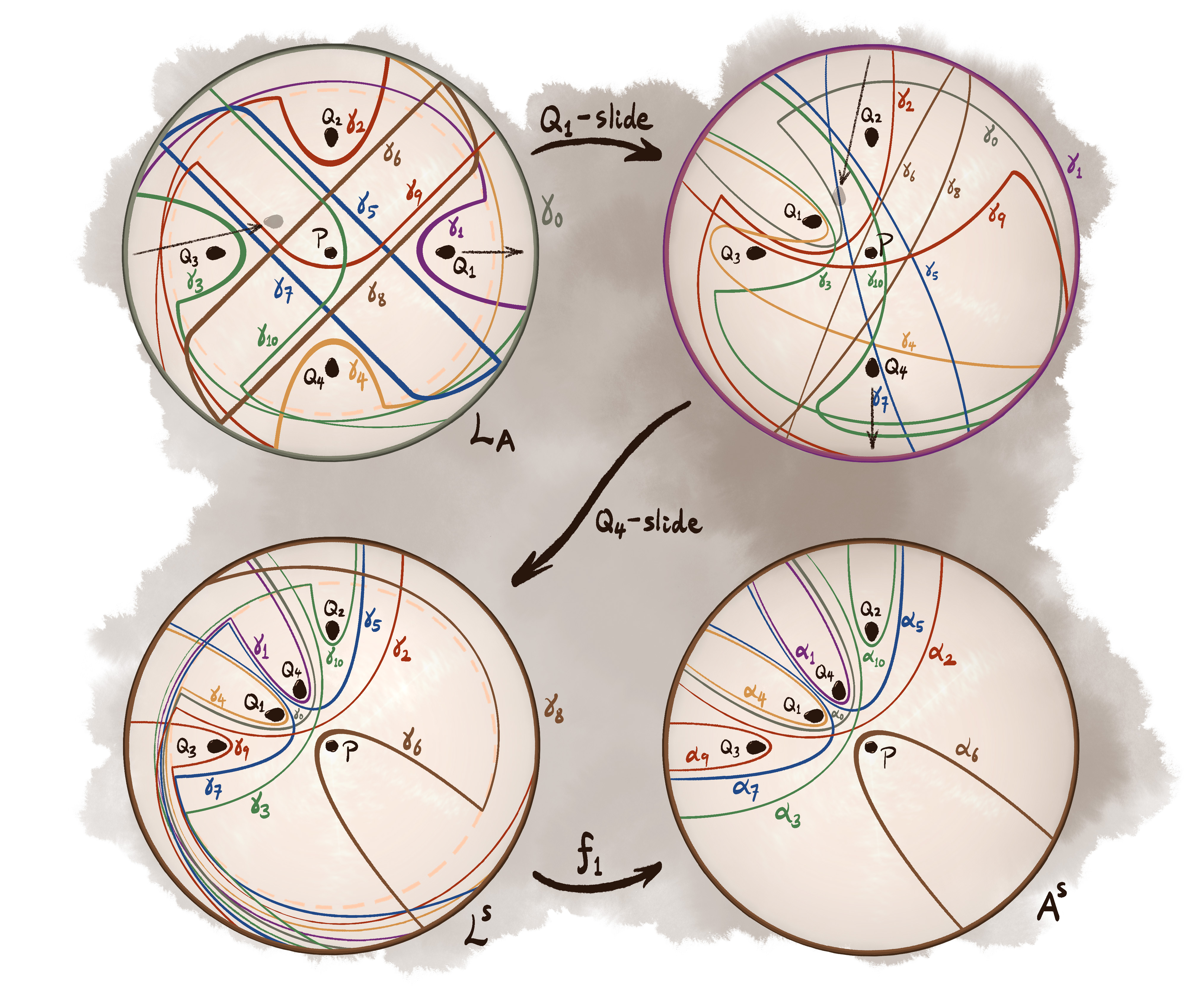}
\caption{The process of performing a puncture slide when $n = 5$. By sliding $Q_2$ and $Q_4$, we verify that $L_A$ and $L'$ are equivalent.}
\label{UniquenessWhenNIsFive}
\end{figure}
\end{proof}

\begin{thm}
\label{Thm: Uniqueness when n = 5}
Maximal complete $1$-systems of loops on $N_{1,5}$ are unique up to equivalence.
\end{thm}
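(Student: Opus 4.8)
The plan is to work entirely on the watermelon side. By \cref{thm: watermelons and marked loops systems} and \cref{MaxWDisMaxLoops}, a maximal complete $1$-system of loops on $N_{1,5}$ is $L_A$ for some maximal watermelon $A$ on $D_5$, which by \cref{cardinality of maximal watermelon} has exactly $10$ arcs; it suffices to show every such $L_A$ is equivalent to $L^s$. The engine is puncture-filling: for a puncture $P$ I pass to a $P$-reduced watermelon $A'$ on $D_4$ (\cref{defi: $P$-reduced watermelons}). Since each $P$-source set has size at most two (\cref{lem: fill point at most two arcs are isotopic}) and at most one arc of $A$ becomes null-isotopic after filling $P$ (namely a short arc isolating $P$, of which there is at most one by \cref{arc = punctures partition}), the cardinality of $A'$ is $5$ or $6$; \cref{5 arcs P-parallel} excludes $5$, so $A'$ is a maximal watermelon on $D_4$ for \emph{every} choice of $P$.

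Next I would count the short arcs of $A$, say there are $s$ of them; by \cref{short arcs are disjoint to any others} and \cref{arc = punctures partition} they isolate $s$ distinct punctures, so $s\le 5$. For a puncture $P$ not isolated by any short arc, \cref{lem: $P$-source set has short arc} shows the short arcs of $A'$ are exactly the reductions of the short arcs of $A$, so $A'$ has precisely $s$ short arcs; since $A'$ is maximal on $D_4$ it has at least three short arcs, so whenever a non-isolated puncture exists we get $s\ge 3$, and together with $s\le 5$ this forces $s\in\{3,4,5\}$. If $s=5$ then $A$ has $5=n$ short arcs and is standard by \cref{Uniqueness of maximal watermelon with n short arcs}, so $L_A=L^s$. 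If $s=4$, pick $P$ among the non-isolated punctures: then $A'$ has $4$ short arcs, i.e.\ $A'$ is the standard maximal watermelon on $D_4$, so \cref{6 arcs P-parallel} identifies $A$ with the configuration of \cref{FourShortArcsWhenNIsFive}, and \cref{n=5 equivalent} gives $L_A\simeq L^s$.

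The main obstacle is the case $s=3$, with two non-isolated punctures $P_1,P_2$. Here the count above shows that for either non-isolated puncture the reduced watermelon again has exactly $3$ short arcs, so neither non-isolated reduction is standard, and \cref{6 arcs P-parallel} (whose proof rests on \cref{lem: $P$-source set has short arc}, hence on $P$ being non-isolated) does not apply directly. I would resolve this in one of two ways. The cleaner route, if it works, is to rule $s=3$ out: maximality forces each non-isolated puncture to be \emph{fenced off} by arcs of $A$ (otherwise one could adjoin a short arc isolating it, contradicting $\#A=10$), the minimal fence being a triangle of three arcs pairwise meeting once, and the goal is to show that fencing both $P_1$ and $P_2$ inside a $1$-system on $D_5$ forces some pair of arcs to meet at least twice. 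The fallback keeps $s=3$ and reduces along a chosen \emph{isolated} puncture: applying \cref{lem: $P$-source set has short arc} to $P_1$ and to $P_2$ shows $A$ has no arc isolating $\{P_1,P_2\}$, so of the $\binom{5}{2}=10$ pair-partitions exactly three are missing, one being $\{P_1,P_2\}$ and only two others; since these two kill at most two of the three isolated punctures $Q_1,Q_2,Q_3$, some $Q$ has arcs of $A$ isolating both $\{Q,P_1\}$ and $\{Q,P_2\}$, and filling $Q$ turns these into short arcs, making the $Q$-reduced watermelon standard on $D_4$. Along this route the real difficulty is to re-prove the reconstruction in \cref{6 arcs P-parallel} for an isolated reduction — its present conclusion that $A$ has four short arcs is false here — and then to exhibit explicit puncture slides, as in \cref{n=5 equivalent}, carrying $L_A$ to $L^s$.
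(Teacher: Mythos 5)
Your setup and your treatment of the cases $s=5$ and $s=4$ track the paper's proof faithfully: the paper also reduces at a puncture not isolated by any short arc, uses \cref{lem: fill point at most two arcs are isotopic} and \cref{5 arcs P-parallel} to force the reduction to be maximal on $D_4$, handles the standard reduction via \cref{6 arcs P-parallel} and \cref{n=5 equivalent}, and handles five short arcs via \cref{Uniqueness of maximal watermelon with n short arcs}. The genuine gap is exactly where you place it: the case $s=3$. Neither of your two routes closes it, and your preferred ``cleaner route'' cannot work, because maximal watermelons on $D_5$ with exactly three short arcs do exist. Indeed, take the standard system $L^s$ and re-mark it at the loop $\gamma_{1,2}$ (associated to a short arc): by the paper's own computation in the non-uniqueness section, $S(\gamma_{1,2})=\{\gamma_0,\gamma_{1,3},\gamma_{2,5}\}$ has three elements, so by \cref{defi: Sets of relative short loops} the maximal watermelon associated to $(L^s,\overrightarrow{\gamma_{1,2}})$ has exactly three short arcs. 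In particular, two punctures \emph{can} be fenced off by arcs of a $1$-system without forcing a double intersection (three arcs pairwise meeting once bound a triangle containing both), so no counting or fencing argument will rule $s=3$ out. Your fallback route begins soundly --- the argument that some isolated puncture $Q$ admits arcs of $A$ realizing both partitions $\{Q,P_1\}$ and $\{Q,P_2\}$, so that the $Q$-reduction is standard on $D_4$, is correct --- but, as you yourself note, \cref{6 arcs P-parallel} cannot then be invoked: its proof runs through \cref{lem: $P$-source set has short arc}, which requires the filled puncture to be non-isolated, and its conclusion (four short arcs on $D_5$) is false when $s=3$. Re-proving that reconstruction for an isolated reduction, plus finding the slides, is the entire remaining content, and it is not supplied.

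The paper resolves $s=3$ differently, and the idea is worth internalizing: since uniqueness is only claimed up to the mapping class group of $N_{1,5}$, one is free to modify the system by a mapping class \emph{in the middle of the argument}, not only at the end. Concretely, when the $P$-reduction is the three-short-arc watermelon of \cref{THreeShortArcsSystemWhenNIsFour}, the paper performs the puncture slide of the puncture isolated by one of the three short arcs, exactly as in the proof of \cref{Thm: Uniqueness when n = 4}. A puncture slide does not preserve the mark $\gamma_0$ (it moves the marked loop through the cross-cap), so it genuinely changes the associated watermelon; after the slide, the $P$-reduced watermelon of the new, equivalent system \emph{is} standard on $D_4$, and then \cref{6 arcs P-parallel} and \cref{n=5 equivalent} finish as in the $s=4$ case. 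Your proposal, by contrast, only allows itself watermelon moves (boundary-fixing homeomorphisms of $D_5$) and puncture-filling until the very last step, which is precisely why it gets stuck: the short-arc count $s$ is an invariant of those operations, but not of the mapping class group action, and the proof needs to change it from $3$ to $4$.
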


\begin{proof}
Let $A = \{ \alpha_1 , \cdots , \alpha_{10} \}$ be a maximal watermelon on $D_5$. The fact that $|A| = 10$ follows from \cref{cardinality of maximal watermelon}.

If there are $5$ short arcs in $A$, then $A$ is equivalent to the standard maximal watermelon by \cref{Uniqueness of maximal watermelon with n short arcs}.

If there are at most $4$ short arcs in $A$, let $D_4$ be the surface obtained from $D_5$ by filling in a puncture $P$ that is not isolated by any short arc. Then, by \cref{lem: fill point at most two arcs are isotopic}, since at most two distinct arcs in $A$ can be $P$-parallel, any $P$-reduced watermelon $A'$ on $D_4$ must have cardinality at least $5$. If the cardinality of $A'$ is exactly $5$, then it follows from \cref{5 arcs P-parallel} that $A$ is not maximal, yielding a contradiction. Hence the cardinality of $A'$ is exactly $6$, as this is the cardinality of a maximal watermelon on $D_4$, then by the proof of \cref{Thm: Uniqueness when n = 4}, $A'$ must either be the standard maximal watermelon or the maximal watermelon shown in \cref{THreeShortArcsSystemWhenNIsFour}. 

If $A'$ is the standard maximal watermelon on $D_4$, then by \cref{6 arcs P-parallel} and \cref{n=5 equivalent}, we know that $L_A$ is equivalent to the standard maximal complete $1$-system of loops on $D_5$. If $A'$ is the maximal watermelon shown in \cref{THreeShortArcsSystemWhenNIsFour}, then by \cref{lem: $P$-source set has short arc}, each of the three short arcs in $A'$ (denoted by $\alpha'_1, \alpha'_2, \alpha'_3$) has a corresponding short arc on $D_5$ in its $P$-source set (denoted by $\alpha_1, \alpha_2, \alpha_3$ respectively). We now proceed as in the proof of \cref{Thm: Uniqueness when n = 4}, performing a puncture slide $\varphi$ of the puncture isolated by $\alpha_1$ to obtain a complete $1$-system of loops, denoted by $L_\varphi$. The puncture slide $\varphi$ pushes $\alpha_1$ to be isotopic to $\partial D_5$. Let $\varphi(\alpha_1)$ be the mark of $L_\varphi$, and let the associated watermelon be denoted by $A_\varphi$. By \cref{Thm: Uniqueness when n = 4}, $L_{A'}$ (on $D_4$) becomes the standard maximal complete $1$-system of loops on $D_4$ after performing the puncture slide. It follows that the puncture slide transforms the $P$-reduced watermelon of $A$ into the $P$-reduced watermelon of $A_\varphi$, which is the standard maximal watermelon on $D_4$. Applying \cref{6 arcs P-parallel} and \cref{n=5 equivalent} once more, we conclude that $L_\varphi$ is equivalent to the standard maximal complete $1$-system of loops on $D_5$.

\end{proof}

\section{Non-Uniqueness of the Maximal Complete 1-System on Punctured Projective Surfaces with At Least Six Punctures} \label{c4section: Non-Uniqueness of the Maximal Complete One-System on Punctured Projective Surfaces with At Least Six Punctures}

\subsection{Existence of maximal watermelons with n-1 short arcs}
\label{subsection: Existence of maximal watermelons with $n-1$ short arcs}

In \cref{subsection: Existence of maximal watermelons with $n-1$ short arcs}, we will construct a maximal watermelon $A$ with exactly $n-1$ short arcs $(n \geq 4)$.

\begin{thm}
\label{Thm: Existence of max WD with n-1 short arcs}
For every $n \geq 4$, there is a maximal watermelon $A'$ on $D_{n}$ such that $A'$ has exactly $n-1$ short arcs.
\end{thm}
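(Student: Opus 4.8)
The plan is to reduce the statement to a single explicit construction by using that maximality of a watermelon is detected by cardinality alone. By \cref{cardinality of maximal watermelon} every maximal watermelon on $D_n$ has exactly $\tfrac12 n(n-1)$ arcs; since every watermelon is contained in a maximal one, every watermelon has at most $\tfrac12 n(n-1)$ arcs, and any watermelon attaining this bound is maximal. Hence it suffices to exhibit one watermelon $A'$ on $D_n$ with exactly $\tfrac12 n(n-1)$ arcs, exactly $n-1$ of which are short. Maximality is then automatic, and in particular the unique puncture not isolated by a short arc is ``fenced off'' for free, with no separate argument.

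The construction I would use fences off one puncture. Choose $n-1$ pairwise disjoint short arcs $\sigma_1,\dots,\sigma_{n-1}$ isolating punctures $P_1,\dots,P_{n-1}$ (these are automatically disjoint from everything by \cref{short arcs are disjoint to any others}), leaving the last puncture $Q=P_n$ inside the central region $F := D_n\setminus\bigcup_i F_{\sigma_i,1}$, a once-punctured disk whose boundary meets $\partial D_n$ in $n-1$ segments $S_1,\dots,S_{n-1}$ cyclically interleaved with the $\sigma_i$. I would then fill $F$ with exactly one interior arc $\gamma_{i,j}$ for each unordered pair $\{S_i,S_j\}$, giving $\binom{n-1}{2}$ arcs and a grand total of $\binom{n-1}{2}+(n-1)=\binom{n}{2}$, the required cardinality. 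By \cref{arc = punctures partition} each $\gamma_{i,j}$ realizes the contiguous bipartition of $\{P_1,\dots,P_{n-1}\}$ whose two cut-gaps sit at $S_i$ and $S_j$; since the segment-index pair determines this bipartition bijectively, the interior arcs are pairwise non-isotopic and distinct from the short arcs. The one design choice is the side of $Q$: for the $n-1$ cyclically adjacent pairs (whose $P$-block is a single puncture $P_k$) I route $\gamma_{k-1,k}$ so that $Q$ joins $P_k$, turning the would-be singleton $\{P_k\}$ into $\{P_k,Q\}$, while for every non-adjacent pair both $P$-blocks already have size $\ge 2$, so either side is safe. This makes the short arcs exactly $\sigma_1,\dots,\sigma_{n-1}$ (no interior arc is short, and no arc isolates $Q$), giving precisely $n-1$ short arcs, generalising the $n=4$ model of \cref{THreeShortArcsSystemWhenNIsFour}.

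The remaining verification is that $A'$ is genuinely a watermelon, i.e. a $1$-system in minimal position. Essentiality and pairwise non-isotopy are handled by the partition bookkeeping above, so the crux is that every pair of arcs meets at most once. For two interior arcs this is immediate after one fills in $Q$: they are then chords of an unpunctured disk and meet at most once, and the only danger is that re-routing around $Q$ creates a second, $Q$-encircling intersection. I would control this with explicit coordinates in the spirit of \cref{Existence of standard maximal watermelon}: place the segment endpoints on an inner circle and realise every non-adjacent arc as a straight chord, so that all long-range intersections occur among straight segments and are automatically $\le 1$, then treat the $n-1$ short-range wrapped arcs by a single uniform local picture near $Q$, checking that each wrapped arc forms no bigon with any chord. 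The short arcs contribute nothing by \cref{short arcs are disjoint to any others}.

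The main obstacle is exactly this last point: proving, uniformly in $n$, that the $n-1$ arcs wrapped around $Q$ can be drawn simultaneously in minimal position with one another and with all the straight chords. The combinatorics (cardinality $\binom n2$, exactly $n-1$ short arcs, distinctness) is forced and clean, but the geometric realisation of the wrapping is where a careful coordinate model or a hands-on local analysis is unavoidable. As an alternative that sidesteps a global coordinate computation, I would set up an induction on $n$ with base case $n=4$ supplied by the three-short-arc watermelon already produced in the proof of \cref{Thm: Uniqueness when n = 4}, the inductive step splitting one isolated puncture into two and doubling the resulting $Q$-parallel arcs so as to raise the short-arc count by one while preserving maximality; there the same wrapping issue reappears, but only locally near the split, which may be easier to control.
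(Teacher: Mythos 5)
Your reduction of maximality to cardinality (via \cref{cardinality of maximal watermelon}) is correct and is exactly how the paper proceeds, but your primary construction is not realizable for $n \geq 5$, and the failure is not the drawing issue you flag at the end: it is an obstruction at the level of isotopy classes. Write $w_k$ for your wrapped arc $\gamma_{k-1,k}$, which separates $\{P_k, Q\}$ from the remaining punctures and has its endpoints on $S_{k-1}$ and $S_k$; your design includes $w_k$ for every $k$. Choose indices $k, l$ with $\{S_{k-1},S_k\} \cap \{S_{l-1},S_l\} = \emptyset$, which is possible exactly when $n-1 \geq 4$. Then $i(w_k, w_l) \geq 2$. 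Indeed, $w_k$ and $w_l$ cannot be made disjoint: two disjoint separating arcs induce partitions in which some block of one is contained in a block of the other, equivalently some pairwise intersection of blocks is empty, whereas here the four intersections are $\{Q\}$, $\{P_k\}$, $\{P_l\}$ and the set of remaining punctures, all nonempty. Moreover, their intersection number is even: the endpoints of $w_k$ cut $\partial D_n$ into two arcs, one of which contains only the portion of $\partial D_n$ bounding $F_{\sigma_k,1}$ together with parts of $S_{k-1}$ and $S_k$, so both endpoints of $w_l$ lie in the other arc, and any representative of $w_l$ transverse to $w_k$ crosses it an even number of times. Hence your $\binom{n-1}{2}$ interior arcs, one per segment pair with every adjacent pair wrapped, never form a $1$-system once $n \geq 5$, no matter how carefully they are drawn. (At $n=4$ all positions are pairwise cyclically adjacent, which is why your picture agrees with \cref{THreeShortArcsSystemWhenNIsFour}; that case is misleading.)

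The paper's construction in \cref{Thm: Existence of max WD with n-1 short arcs} is shaped precisely to avoid this: it wraps around the extra puncture at only two adjacent positions, and it reaches the cardinality $\binom{n}{2}$ by using certain segment pairs twice rather than each pair once --- it adjoins to the standard watermelon on $D_{n-1}$ the $P_n$-parallel copies (the two choices of side of $P_n$) of selected arcs, and then verifies minimal position with explicit coordinates. For $n = 5$, for example, its six non-short arcs have small sides $\{P_1,Q\}$, $\{P_2,Q\}$, $\{P_1,P_2\}$, $\{P_1,P_2,Q\}$, $\{P_2,P_3\}$, $\{P_2,P_3,Q\}$: the wrapped classes at the other two positions are omitted, and two chord partitions appear with both choices of side of $Q$. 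So a correct construction must repeat some segment pairs and skip others; ``one arc per pair'' is incompatible with the $1$-system condition. Your fallback induction --- split an isolated puncture in two and double the $Q$-parallel arcs --- is in substance the paper's construction (done there in one step from $D_{n-1}$), but as written it is only a sketch: by the obstruction above, the choice of which arcs to double is genuinely constrained and must be specified, and the resulting family still requires the minimal-position verification that constitutes essentially all of the paper's proof.
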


\begin{proof}

Consider the standard maximal watermelon $A^s$ on $D_{n-1}$, where the set of punctures is given by
\begin{equation}
\mathcal{P}=\left\{P_i = \tfrac{1}{2}\exp\left(\frac{2(i-1) \pi \sqrt{-1}}{n-1}\right) \middle| i = 1 , \cdots, n-1 \right\} \notag
\end{equation}
as constructed in \cref{Existence of standard maximal watermelon}.

We denote $u := \exp \left(\frac{2 \pi \sqrt{-1}}{(n+1)(n-1)^2} \right)$. We add one more puncture $P_n$ (chosen so that it avoids all arcs in $A^s$) on $D_{n-1}$, defined by $P_n := \tfrac{1}{2} u^n = \tfrac{1}{2} \exp \left(\frac{2 \pi n \sqrt{-1}}{(n+1)(n-1)^2} \right)$.

Next, we construct $n - 1$ additional arcs $\alpha'_{2,j}$ for $j = 3, \cdots, n$ (with indices understood to be $(n - 1)$-cyclic and symmetric, so that, for instance, $\alpha'_{2,n} = \alpha'_{2,1} = \alpha'_{1,2}$), as well as the arc $\alpha'_{1,3}$ (to be defined below), such that each $\alpha'_{2,j}$ is isotopic on $D_{n-1}$ to the corresponding $\alpha_{2,j}$, and $\alpha'_{1,3}$ is isotopic to $\alpha_{1,3}$ on $D_{n-1}$. For each $j = 3, \cdots, n$, we define
\begin{equation}
z'_{2,j}:= z_{2,j}u = \tfrac{1}{2} \exp{\left( \left( \frac{j-2}{n-1} + \frac{j-2}{(n-1)^2}\right) 2 \pi \sqrt{-1}\right)} u \notag
\end{equation}
and
\begin{equation}
w'_{2,j}:= \tfrac{1}{2} u^{n+2-j}. \notag
\end{equation}
We then define the arcs $\alpha'_{2,j}$ for $j = 3, \cdots, n$ by:
\begin{align}
\label{eq: alpha prime 2 j}
\alpha'_{2,j}(t) :=
\begin{cases}
(2-3t) z'_{2,j} ,& t \in \left[0,\tfrac{1}{3} \right], \\
(2-3t) z'_{2,j} + (3t-1) w'_{2,j} ,& t \in \left[\tfrac{1}{3},\tfrac{2}{3} \right], \\
(3t-1) w'_{2,j} ,& t \in \left[\tfrac{2}{3},1 \right]. \\
\end{cases}
\end{align}
We also define the points
\begin{equation}
z'^+_{1,3} := \tfrac{1}{2} u, \quad z'^-_{1,3} := \tfrac{1}{2} u^{- 1}, \quad w'^+_{1,3} := \tfrac{1}{2} \exp \left(\frac{2\pi \sqrt{-1}}{n-1}\right) u, \quad w'^-_{1,3} := \tfrac{1}{2} \exp \left(\frac{2\pi \sqrt{-1}}{n-1}\right) u^{- 1}. \notag
\end{equation}
Let $v$ denote the intersection point of the two lines $l_{z'^+_{1,3}, z'^-_{1,3}}$ and $l_{w'^+_{1,3}, w'^-_{1,3}}$ (one may verify that $v \in D_n$ for $n \geq 4$). We define the arc $\alpha'_{1,3}$ by
\begin{align}
\label{eq: alpha prime 1 3}
\alpha'_{1,3}(t) :=
\begin{cases}
(2-3t) z'^-_{1,3} ,& t \in \left[0,\tfrac{1}{3} \right], \\
(3-6t) z'^-_{1,3} + (6t-2) v ,& t \in \left[\tfrac{1}{3},\tfrac{1}{2} \right], \\
(4-6t) v + (6t-3) w'^+_{1,3} ,& t \in \left[\tfrac{1}{2},\tfrac{2}{3} \right], \\
(3t-1) w'^+_{1,3} ,& t \in \left[\tfrac{2}{3},1 \right]. \\
\end{cases}
\end{align}
We then define
\begin{equation}
A' := A^s \cup \{\alpha'_{1,j} \mid j = 2, \cdots, n-1\} \cup \{ \alpha'_{1,3} \}, \notag
\end{equation}
Note that $A^s$ is the standard maximal watermelon on $D_{n-1}$, not on $D_n$. We verify that $A'$ is a watermelon on $D_n$ (see \cref{ExistenceOfMaxWatermelonDiagramsWithNMinusOneShortArcs}) as follows. All arcs in $A'$ are straight segments within the disk $\mathbb{B}_{1/2}$, and hence intersect each other at most once inside this disk. Outside $\mathbb{B}_{1/2}$, all arcs except $\alpha'_{1,3}$ are disjoint. The arc $\alpha'_{1,3}$ does not intersect any other arc within $\mathbb{B}_{1/2}$, and intersects each of them at most once outside it. Therefore, the arcs in $A'$ pairwise intersect at most once. Moreover, by inspecting the explicit definitions in \cref{eq: alpha prime 2 j} and \cref{eq: alpha prime 1 3}, together with \cref{ExistenceOfMaxWatermelonDiagramsWithNMinusOneShortArcs}, one can verify that each pair of arcs in $A'$ is in minimal position. Furthermore, $A'$ contains exactly $n - 1$ short arcs, and is maximal, since its cardinality is $\tfrac{1}{2}(n-1)(n-2) + n-1 = \tfrac{1}{2}n(n-1) $.

\begin{figure}[H]
\centering
\includegraphics[width=1.0\textwidth]{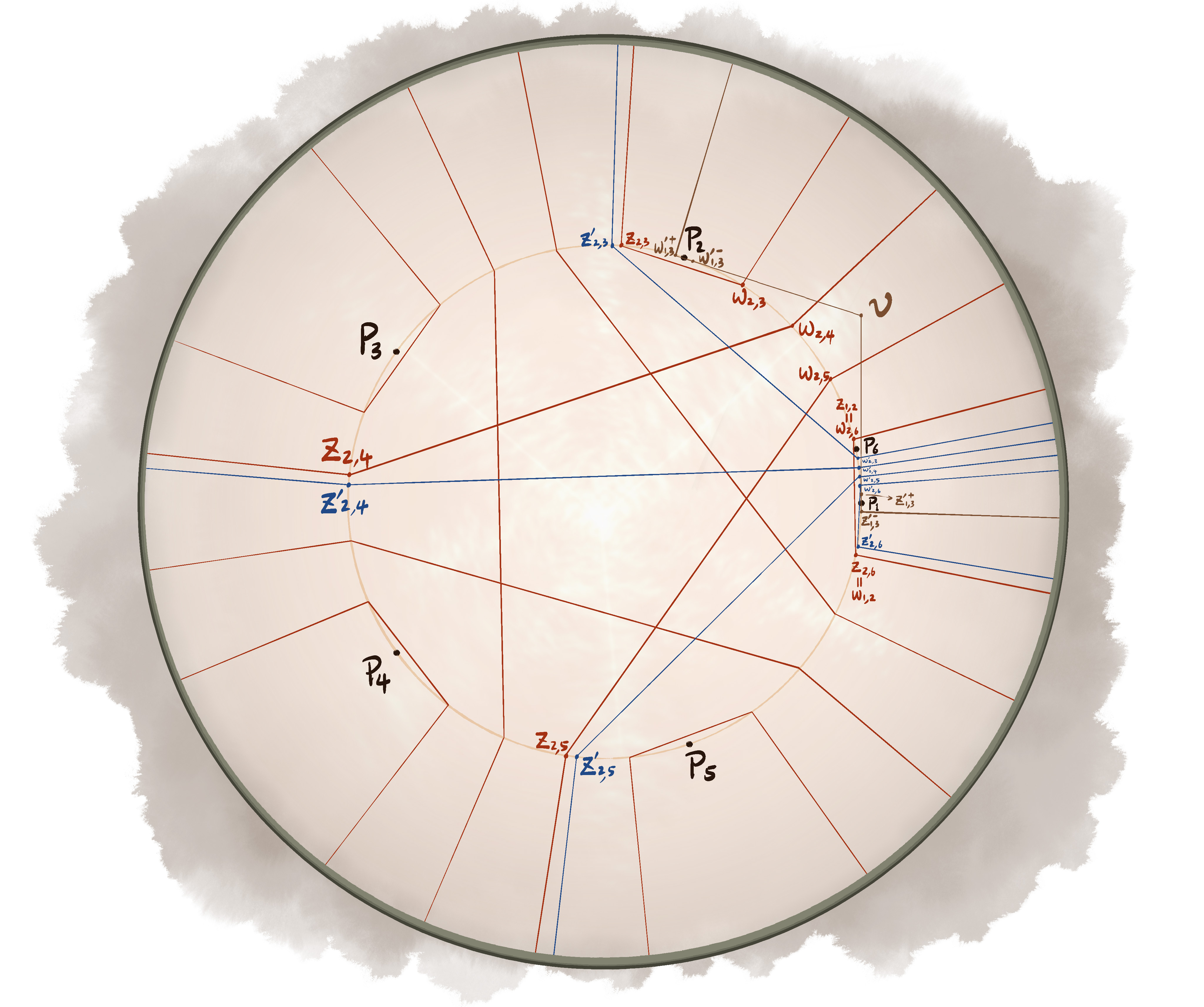}
\caption{A maximal watermelon with $5=6-1$ short arcs.}
\label{ExistenceOfMaxWatermelonDiagramsWithNMinusOneShortArcs}
\end{figure}  

\end{proof}

\subsection{Mod two homology tables}\label{subsection 7.2}

We now prove the non-uniqueness of complete maximal $1$-systems of loops on $D_n / {\sim}$ for $n \geq 6$. Specifically, we show that the $1$-system $L^s$, associated to the standard maximal watermelon $A^s$, is not equivalent to the complete $1$-system $L' := L_{A'}$, where $A'$ is the maximal watermelon constructed in \cref{subsection: Existence of maximal watermelons with $n-1$ short arcs}. In fact, when $n \geq 6$, $L^s$ is not equivalent to any maximal complete $1$-system of loops associated to a maximal watermelon that contains exactly $n - 1$ short arcs.

The main idea is to compare the tables of first $\mathbb{Z}_2$-homology groups associated to the two systems under the action of the mapping class group. We denote the generators of $H_1(N_{1,n}; \mathbb{Z}_2)$ by $g_0, \cdots, g_n$, where $g_0$ corresponds to the loop $\partial D_n / {\sim}$ (that is, the loop $\gamma_0$ in $L$), and $g_1, \cdots, g_n$ correspond to the homology classes of the horocycles $h_i$ encircling the punctures $P_i$, for $i = 1, \cdots, n$ (see \cref{GeneratorsOfZTwoHomologyGroup}). Then $H_1(N_{1,n}; \mathbb{Z}_2) \cong (\mathbb{Z}_2)^n$, with a single relation $g_1 + \cdots + g_n = 0$, so that ${g_0, \cdots, g_{n-1}}$ forms a basis.

Any simple loop $\gamma$ on $D_n / {\sim}$ determines a $\mathbb{Z}_2$-homology class $[\gamma]_2 = \sum_{i=0}^{n} \epsilon_i g_i$ in $H_1(N_{1,n}; \mathbb{Z}_2)$, where each $\epsilon_i = \epsilon_i(\gamma) \in \mathbb{Z}_2$. This representation is not unique, since it is determined only up to the relation $g_1 + \cdots + g_n = 0$. In particular, there are exactly two possibilities for $(\epsilon_0,\ldots,\epsilon_n)$, which differ by flipping each entry $\epsilon_i$ (for $i = 0, \ldots, n$) between $0$ and $1$. We call $(\epsilon_0, \cdots, \epsilon_{n}) \in (\mathbb{Z}_2)^{n+1}$ the \emph{canonical coefficients of $\gamma$} choosen so that $\epsilon_n(\gamma) = 0$, and the \emph{anti-canonical coefficients of $\gamma$} choosen so that $\epsilon_n(\gamma) = 1$.

The entry $\epsilon_0$ is the same in both the canonical and anti-canonical coefficients, and it depends only on the loop $\gamma$: $\epsilon_0(\gamma) = 1$ if and only if $\gamma$ is $1$-sided, and $\epsilon_0(\gamma) = 0$ if and only if $\gamma$ is $2$-sided. According to \cref{lem: max no 2-sided} and \cref{lem: max to complete}, if $L$ is a maximal complete $1$-system of loops on $N_{1,n}$, then every $\gamma \in L$ is $1$-sided, and therefore $\epsilon_0(\gamma) = 1$.

\begin{figure}[H]
\centering
\includegraphics[width=1.0\textwidth]{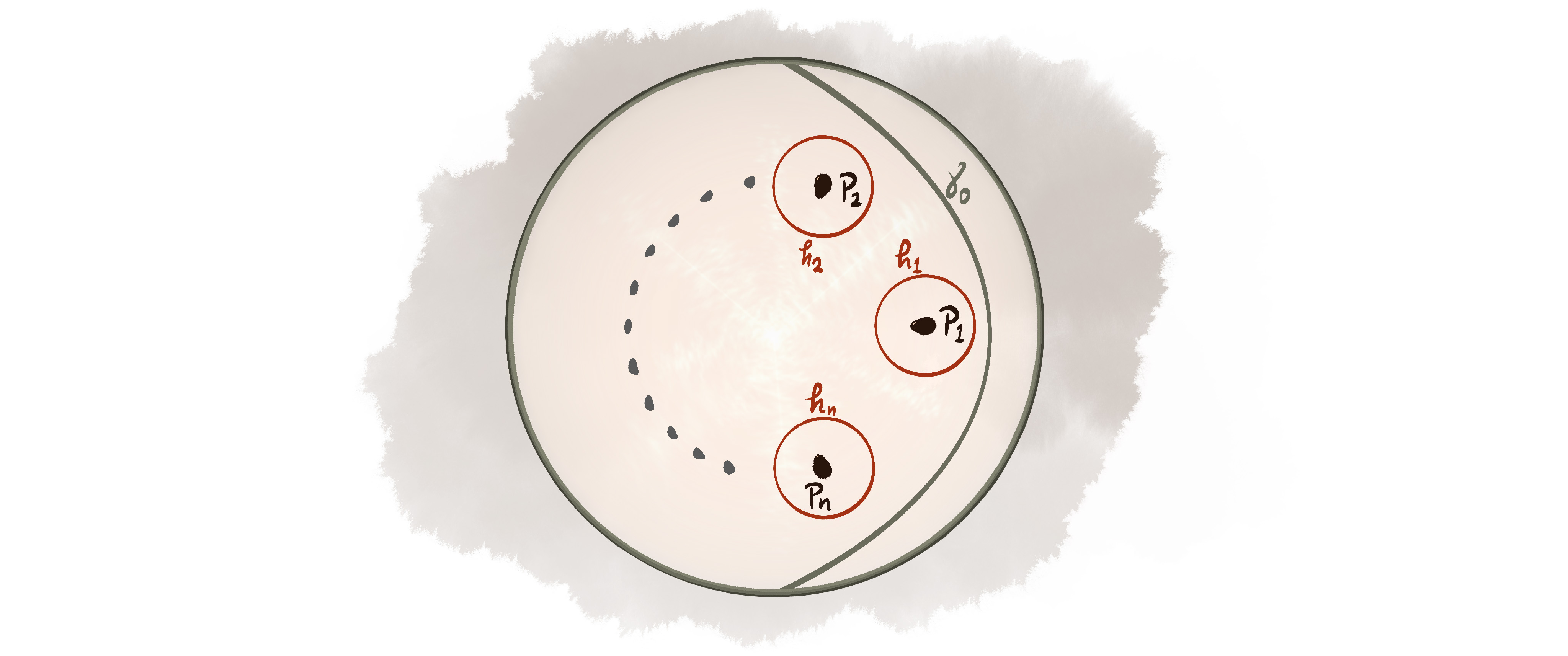}
\caption{The generators of the homology group with $\mathbb{Z}_2$ coefficients of $N_{1,n}$.}
\label{GeneratorsOfZTwoHomologyGroup}
\end{figure}

\begin{defi}[$\mathbb{Z}_2$ homology tables]
For a maximal complete $1$-system $L$ of loops on $N_{1,n}$, every loop $\gamma$ in $L$ determines a $\mathbb{Z}_2$-homology class $[\gamma]_2 = \sum_{i=0}^{n} \epsilon_s g_s$ with canonical coefficients (i.e., $\epsilon_n(\gamma) = 0$). We define the \emph{$\mathbb{Z}_2$ homology table of $L$} as a table with $n+1$ rows labelled by $g_0,\cdots,g_{n}$, and $\tfrac{1}{2}n(n-1)+1$ columns, where each column represents a loop in $L$. The element at the row $g_s$ and column representing $\gamma$ is $\epsilon_s(\gamma) \in \mathbb{Z}_2$. 
\end{defi}

\begin{ex}[The table for standard maximal complete $1$-systems]
\label{standard table example}
Suppose that
\begin{equation}
L^s = \{\gamma_0 = \partial D_n\} \cup \{\gamma_{i,j} := \gamma(\alpha_{i,j}) \mid 1 \leq i < j \leq n\} \notag
\end{equation}
is the standard maximal complete $1$-system of loops on $D_n / {\sim}$ associated to the standard maximal watermelon
\begin{equation}
A^s = \{\alpha_{i,j} \mid 1 \leq i < j \leq n\}, \notag
\end{equation}
where the arcs $\alpha_{i,j}$ are defined as per \cref{Existence of standard maximal watermelon} (see \cref{StandardMaxDiagramFourPunctures} for a depiction).

For the loop $\gamma_0$, we have $\epsilon_0(\gamma_0) = 1$ and $\epsilon_1(\gamma_0) = \cdots = \epsilon_n(\gamma_0) = 0$. For each loop $\gamma_{i,j}$ with $1 \leq i < j \leq n$, we have $[\gamma_{i,j}]_2 = g_0 + \sum_{t=i}^{j-1} g_t$. Therefore, the coefficients of $\gamma_{i,j}$ is given by $\epsilon_0(\gamma_{i,j}) = 1$, $\epsilon_t(\gamma_{i,j}) = 1$ for $i \leq t \leq j - 1$, and $\epsilon_s(\gamma_{i,j}) = 0$ otherwise.

We note the following $n$ specific loops in $L^s$: $\gamma_{i,i+1}$ for $i = 1, \cdots, n - 1$, and $\gamma_{1,n}$. These loops are associated to the short arcs in the standard maximal watermelon. The $\mathbb{Z}_2$-coefficients $\epsilon_i$ of their homology classes follow a distinct pattern. For each $\gamma_{i,i+1}$ with $1 \leq i \leq n - 1$, exactly one of the coefficients $\epsilon_1, \cdots, \epsilon_{n}$ equals $1$ (namely $\epsilon_i = 1$), corresponding to the short arc isolating the puncture $P_i$. For $\gamma_{1,n}$, exactly one of the coefficients $\epsilon_1, \cdots, \epsilon_{n}$ equals $0$ (namely $\epsilon_{n}=0$), since the associated short arc isolates $P_n$. Moreover, these $n$ loops are the only loops in $L$ whose $\mathbb{Z}_2$-homology classes satisfying the condition that either exactly one of the coefficients $\epsilon_1, \cdots, \epsilon_{n}$ equals $1$, or exactly one of them equals $0$. Note that this property is invariant under the ambiguity of the homology class coefficients caused by the relation $g_1 + \cdots + g_n = 0$, that is, under simultaneously flipping all the entries $\epsilon_1, \cdots, \epsilon_n$ between $0$ and $1$.

The $\mathbb{Z}_2$ homology table of $L^s$ is as follows:  

\begin{table}[H]
\centering
\caption{$\mathbb{Z}_2$ homology table of the standard maximal complete $1$-system of loops.}
\label{table of the standard maximal complete $1$-system of loops}
\begin{tabular}{cccccccccccc}
\hline
& $\gamma_0$ & $\gamma_{1,2}$ & $\gamma_{2,3}$ & $\gamma_{3,4}$ & $\cdots$ & $\gamma_{n-2,n-1}$ & $\gamma_{n-1,n}$ & $\gamma_{1,n}$ & $\gamma_{1,3}$ & $\cdots$ & $\gamma_{n-2,n}$ \\
\hline
\mypurple{$g_0$} & \myblue{1} & \myblue{1} & \myblue{1} & \myblue{1} & \myblue{$\cdots$} & \myblue{1} & \myblue{1} & \myblue{1} & \myblue{1} & \myblue{$\cdots$} & \myblue{1}\\
\mypurple{$g_1$} & 0 & \myred{1} & 0 & 0 & $\cdots$ & 0 & 0 & \myred{1} & \mygreen{1} & $\cdots$ & 0\\
\mypurple{$g_2$} & 0 & 0 & \myred{1} & 0 & $\cdots$ & 0 & 0 & \myred{1} & \mygreen{1} & $\cdots$ & 0\\
\mypurple{$g_3$} & 0 & 0 & 0 & \myred{1} & $\cdots$ & 0 & 0 & \myred{1} & 0 & $\cdots$ & 0\\
\mypurple{$\vdots$} & $\vdots$ & $\vdots$ & $\vdots$ & $\vdots$ & \myred{$\ddots$} & $\vdots$ & $\vdots$ & \myred{$\vdots$} & $\vdots$ & $\ddots$ & $\vdots$\\
\mypurple{$g_{n-2}$} & 0 & 0 & 0 & 0 & $\cdots$ & \myred{1} & 0 & \myred{1} & 0 & $\cdots$ & \mygreen{1}\\
\mypurple{$g_{n-1}$} & 0 & 0 & 0 & 0 & $\cdots$ & 0 & \myred{1} & \myred{1} & 0 & $\cdots$ & \mygreen{1}\\
\mypurple{$g_n$} & \myyellow{0} & \myyellow{0} & \myyellow{0} & \myyellow{0} & \myyellow{$\cdots$} & \myyellow{0} & \myyellow{0} & \myyellow{0} & \myyellow{0} & \myyellow{$\cdots$} & \myyellow{0}\\
\hline
\end{tabular}
\end{table}

\end{ex}

\begin{ex}[The table of the maximal $1$-system associated to the watermelon in \cref{Thm: Existence of max WD with n-1 short arcs}]
\label{$n-1$ short arcs table example}
Suppose $L' = L_{A'}$ is the maximal complete $1$-system of loops associated to the maximal watermelon constructed in \cref{Thm: Existence of max WD with n-1 short arcs}. Since $A'$ only has exactly $n-1$ short arcs, there are exactly $n-1$ columns in the table whose coefficients have either exactly one entry equal to $1$ or exactly one entry equal to $0$ in the rows corresponding to the $g_1, \cdots, g_n$.
\end{ex}

\subsection{Proof of non-uniqueness}

Let $L$ be a system of loops on a surface $F$, and $\varphi$ be an element in the mapping class group of $F$, then $\varphi(L) := \{\varphi(\gamma) \mid \gamma \in L\}$ is a system of loops equivalent to $L$. By \cref{MCG generators}, the mapping class group of $N_{1,n}$ is generated by all half-twists and all puncture slides. We fix a collection of generators $g_0 , \cdots, g_n$ of $H_1 (N_{1,n})$ as \cref{subsection 7.2}, and $(\epsilon_0,\cdots,\epsilon_n)(g_0 , \cdots, g_n)^{T}$ is an element in $H_1 (N_{1,n})$.

Let $\tau_{i,j}$ be any half-twist that exchanges the punctures $P_i$ and $P_j$ (with $1 \leq i < j \leq n$). Then $\tau_{i,j}$ induces a transformation $(\tau_{*})_{i,j} : H_1 (N_{1,n} ; \mathbb{Z}_2) \rightarrow H_1 (N_{1,n} ; \mathbb{Z}_2)$, which acts on homology coefficients as follows:
\begin{align}
\label{tau}
(\tau_{*})_{i,j}\left( (\epsilon_0,\cdots,\epsilon_n)
\begin{pmatrix}
g_0 \\
\vdots \\
g_n
\end{pmatrix}
\right) =
(\epsilon_0,\cdots,\epsilon_{i-1},\epsilon_{j}, \epsilon_{i+1},\cdots,\epsilon_{j-1},\epsilon_{i}, \epsilon_{j+1},\cdots,\epsilon_n)\begin{pmatrix}
g_0 \\
\vdots \\
g_n
\end{pmatrix}.
\end{align}

This is because $[\gamma]_2$ can be expressed as a $\mathbb{Z}_2$-linear combination of $[h_1]_2, \dots, [h_n]_2$ and $[\gamma_0]_2$. Although there are infinitely many possible half-twists that exchange the punctures $P_i$ and $P_j$, the specific half-twist $\tau_{i,j}$ only swaps the positions of $h_i$ and $h_j$ within the set $\{h_1, \dots, h_n\}$. Therefore, the induced transformation $(\tau_{*})_{i,j}$ corresponds to exchanging the rows labeled by $g_i$ and $g_j$ in the $\mathbb{Z}_2$-homology table of $L$.

Let $\sigma_i$ be any puncture slide that slides the puncture $P_i$. Then $\sigma_i$ induces a transformation $(\sigma_*)_i : H_1(N_{1,n}; \mathbb{Z}_2) \to H_1(N_{1,n}; \mathbb{Z}_2)$, which acts on homology coefficients as follows:
\begin{align}
\label{sigma}
(\sigma_*)_{i}\left( (\epsilon_0,\cdots,\epsilon_n)\begin{pmatrix}
g_0 \\
\vdots \\
g_n
\end{pmatrix} \right) =
(\epsilon_0,\cdots,\epsilon_{i-1},\epsilon_{i}+\epsilon_0, \epsilon_{i+1},\cdots, \epsilon_n)\begin{pmatrix}
g_0 \\
\vdots \\
g_n
\end{pmatrix}.
\end{align}

Similarly, although there are infinitely many puncture slides that slide the puncture $P_i$, the induced transformation $(\sigma_{*})_i$ always corresponds to adding the $g_0$-row to the $g_i$-row in the $\mathbb{Z}_2$-homology table.

By \cref{MCG generators}, the mapping class group of $N_{1,n}$ is generated by all half-twists and puncture slides. Hence, any element in the mapping class group induces an invertible linear transformation on the $\mathbb{Z}_2$-homology table of $L$: this transformation consists of exchanging some of the rows indexed by $g_1, \cdots, g_n$ (i.e., excluding the first row corresponding to $g_0$), and adding the first row to other rows.

\begin{defi}[$\mathbb{Z}_2$-homological differences]
Let $\gamma_1$ and $\gamma_2$ be two simple loops on $N_{1,n}$. Define
\begin{equation}
\delta'_2(\gamma_1, \gamma_2) := \# \left\{ i \in \{1,\cdots,n\} \mid \epsilon_i(\gamma_1) \neq \epsilon_i(\gamma_2) \right\}, \notag
\end{equation}
and define
\begin{equation}
\delta_2(\gamma_1, \gamma_2) := \min\{\delta'_2(\gamma_1, \gamma_2) , n-\delta'_2(\gamma_1, \gamma_2)  \}.\notag
\end{equation}

Note that although the value of $\delta'_2(\gamma_1, \gamma_2)$ depends on the choice of coefficients for $\gamma_1$ and $\gamma_2$ (canonical or anti-canonical), the quantity $\delta_2(\gamma_1, \gamma_2)$ is independent of this choice. This is because flipping the coefficients of either $\gamma_1$ or $\gamma_2$ replaces $\delta'_2$ with $n - \delta'_2$. We call $\delta_2(\gamma_1, \gamma_2)$ the \emph{$\mathbb{Z}_2$-homological difference} between $\gamma_1$ and $\gamma_2$; this is an invariant independent of the choice of coefficients.
\end{defi}

\begin{lem}
\label{lem: delta_2 never changed}
Let $\gamma_1$ and $\gamma_2$ be two simple loops on $N_{1,n}$. For any element $\varphi$ of the mapping class group, we have
\begin{equation}
\delta_2(\gamma_1, \gamma_2) = \delta_2(\varphi(\gamma_1), \varphi(\gamma_2)). \notag
\end{equation}
\end{lem}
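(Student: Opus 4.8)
The plan is to recast the claim as the invariance of a single function on $\mathbb{Z}_2$-homology and then reduce to the generators of the mapping class group. I first note that $\delta_2(\gamma_1,\gamma_2)$ depends only on the difference class $d := [\gamma_1]_2 + [\gamma_2]_2 \in H_1(N_{1,n};\mathbb{Z}_2)$. Indeed, writing $d = \sum_{i=0}^{n} w_i g_i$, an index $i \in \{1,\dots,n\}$ satisfies $\epsilon_i(\gamma_1) \neq \epsilon_i(\gamma_2)$ precisely when $w_i = 1$, so $\delta'_2(\gamma_1,\gamma_2)$ is the number of nonzero entries among $w_1,\dots,w_n$; the relation $g_1 + \cdots + g_n = 0$ replaces this tuple by its entrywise complement and hence $\delta'_2$ by $n-\delta'_2$, so $\delta_2 = \min\{\delta'_2,\, n-\delta'_2\}$ is a well-defined function of the class $d$ alone. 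Because the mapping class group acts $\mathbb{Z}_2$-linearly on homology, $[\varphi\gamma_1]_2 + [\varphi\gamma_2]_2 = \varphi_*(d)$, so the lemma is equivalent to saying that $\delta_2$ is constant on every $\varphi_*$-orbit. By \cref{MCG generators} it then suffices to check invariance under one half-twist and under one puncture slide, since invariance under each generator passes to arbitrary composites.

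For a half-twist $\tau_{i,j}$, \cref{tau} shows that $(\tau_*)_{i,j}$ interchanges the coordinates $w_i$ and $w_j$ (with $i,j \in \{1,\dots,n\}$) and fixes $w_0$. Permuting the coordinates indexed by $1,\dots,n$ leaves the number of nonzero entries among $w_1,\dots,w_n$ unchanged and commutes with passage to the complementary tuple, so both $\delta'_2$ and $\delta_2$ are unchanged; this case is routine.

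The puncture slide is the crux, and I expect it to be the main obstacle. By \cref{sigma}, $(\sigma_*)_i$ adds the $g_0$-coordinate of a class to its $i$-th coordinate, so on $d$ it sends $w_i \mapsto w_i + w_0$ and fixes every other coordinate, where $w_0 = \epsilon_0(\gamma_1) + \epsilon_0(\gamma_2)$ is the difference of the two sidedness bits. Since sidedness is a homeomorphism invariant, $\epsilon_0(\varphi\gamma_k) = \epsilon_0(\gamma_k)$, so $w_0$ is itself fixed by the entire mapping class group and the whole effect of a slide on $d$ is governed by it. When $\gamma_1$ and $\gamma_2$ have equal sidedness we have $w_0 = 0$, the tuple $(w_1,\dots,w_n)$ is fixed outright, and $\delta_2$ is preserved. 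This is exactly the regime in which the lemma is applied: by \cref{lem: max no 2-sided} and \cref{lem: max to complete} every loop of a maximal complete $1$-system is $1$-sided, so $\epsilon_0(\gamma_1) = \epsilon_0(\gamma_2) = 1$, whence $w_0 = 0$ and $d$ lies in the span of $g_1,\dots,g_n$, on which puncture slides act as the identity. The careful point to settle — and the heart of the argument — is precisely that a slide moves only the single coordinate $w_i$, and only by the invariant amount $w_0$; I would finish by the coordinate bookkeeping above, checking throughout that the $\min$ in the definition of $\delta_2$ renders every step independent of the chosen representative of $d$.
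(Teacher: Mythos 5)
Your overall route coincides with the paper's: reduce to the generators of the mapping class group via \cref{MCG generators} and check invariance of $\delta_2$ against the coefficient formulas \cref{tau} and \cref{sigma}; packaging the computation through the difference class $d = [\gamma_1]_2 + [\gamma_2]_2$ is a convenient but cosmetic reformulation. The half-twist case is handled identically in both treatments, and your observation that $\delta_2$ descends to a well-defined function of $d$ alone (because the relation $g_1 + \cdots + g_n = 0$ replaces the tuple by its entrywise complement) is the same well-definedness remark the paper makes.

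The substantive issue is your puncture-slide analysis, where you have correctly isolated something the paper's one-line proof glosses over --- but you have not closed it, and in fact it cannot be closed as the lemma is literally stated. You compute that a slide $\sigma_i$ changes the difference tuple by $w_i \mapsto w_i + w_0$ with $w_0 = \epsilon_0(\gamma_1) + \epsilon_0(\gamma_2)$, and you verify invariance only when $w_0 = 0$, deferring the case $w_0 = 1$ to ``coordinate bookkeeping.'' No bookkeeping will finish it: when $w_0 = 1$ the slide changes $\delta'_2$ by exactly $\pm 1$, and $\delta_2 = \min\{\delta'_2,\, n - \delta'_2\}$ genuinely changes. Concretely, on $N_{1,6}$ take $\gamma_1 = \gamma_0$ (so $[\gamma_1]_2 = g_0$, a $1$-sided loop) and $\gamma_2$ a $2$-sided loop enclosing the punctures $P_1, P_2$ (so $[\gamma_2]_2 = g_1 + g_2$); then $\delta_2(\gamma_1,\gamma_2) = 2$, while after a slide of $P_3$ one has $[\sigma(\gamma_1)]_2 = g_0 + g_3$ and $[\sigma(\gamma_2)]_2 = [\gamma_2]_2$ by \cref{sigma}, giving $\delta_2 = \min\{3,3\} = 3$. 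So the lemma fails for a pair of loops of opposite sidedness, and the paper's assertion that ``both types of transformation preserve $\delta_2$'' is incorrect for slides applied to such a pair. What you have actually proved is the corrected statement --- $\delta_2$ is a mapping class group invariant whenever $\epsilon_0(\gamma_1) = \epsilon_0(\gamma_2)$ --- and, as you note, this is the only case the paper ever uses, since by \cref{lem: max no 2-sided} and \cref{lem: max to complete} all loops in maximal complete $1$-systems are $1$-sided. To make your write-up airtight, state the equal-sidedness hypothesis explicitly (or record the counterexample above) instead of presenting the $w_0 = 1$ case as a remaining verification.
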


\begin{proof}
Since $\varphi$ is generated by half-twists and puncture slides, it induces transformation as described in \cref{tau} and \cref{sigma}. Both types of transformation preserve $\delta_2$. Moreover, simultaneously flipping all entries $\epsilon_i$ for $i = 1, \cdots, n$ of both loops does not change $\delta_2$.
\end{proof}

\begin{defi}[Sets of relative short loops]
\label{defi: Sets of relative short loops}
Let $L$ be a complete $1$-system of loops on $N_{1,n}$, and let $\gamma$ be a loop in $L$. We define the \emph{Set of relative short loops of $\gamma$} to be
\begin{equation}
S(\gamma) := \{ \beta \in L \mid \delta_2(\gamma, \beta) = 1 \}.
\end{equation}
We call any $\beta \in S(\gamma)$ a \emph{relative short loop of $\gamma$}.

The geometric meaning of a relative short loop is as follows: Fix $\gamma$ as a mark of $L$ (with an arbitrary orientation). Let $A_L$ denote the watermelon associated with the marked complete $1$-system of loops $(L, \overrightarrow{\gamma})$. Then the loops in $S(\gamma)$ are associated with the short arcs in $A_L$.
\end{defi}

\begin{cor}
\label{cor: S varphi gamma is varphi S gamma}
Combining \cref{lem: delta_2 never changed} with \cref{defi: Sets of relative short loops}, we obtain the following:  
If $L_1$ and $L_2$ are maximal complete $1$-systems of loops on $N_{1,n}$ with $n \geq 2$, and if $\varphi$ is a representative of a mapping class such that $\varphi(L_1)$ is isotopic to $L_2$ (up to isotopy, we may assume $L_2 = \{\varphi(\gamma) \mid \gamma \in L_1\}$), then for any $\gamma \in L_1$, we have

\begin{equation}
S(\varphi(\gamma)) = \{\varphi(\beta) \mid \beta \in S(\gamma)\} =: \varphi(S(\gamma)). \notag
\end{equation}
\end{cor}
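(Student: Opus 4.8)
The plan is to unwind the definition of the set of relative short loops and feed in the invariance of $\delta_2$ established in \cref{lem: delta_2 never changed}. Since $\varphi$ is a homeomorphism, the assignment $\beta \mapsto \varphi(\beta)$ is a bijection between the isotopy classes making up $L_1$ and those making up $L_2 = \{\varphi(\beta) \mid \beta \in L_1\}$; in particular, every loop of $L_2$ is of the form $\varphi(\beta)$ for a unique $\beta \in L_1$. By \cref{defi: Sets of relative short loops}, the set $S(\varphi(\gamma))$ is computed inside the ambient system $L_2$ that contains $\varphi(\gamma)$, so it equals $\{\varphi(\beta) \mid \beta \in L_1,\ \delta_2(\varphi(\gamma), \varphi(\beta)) = 1\}$.

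The key step is then immediate. By \cref{lem: delta_2 never changed} we have $\delta_2(\varphi(\gamma), \varphi(\beta)) = \delta_2(\gamma, \beta)$ for every $\beta \in L_1$, since the mapping class $\varphi$ preserves the $\mathbb{Z}_2$-homological difference. Substituting this identity into the expression above shows that $\varphi(\beta)$ lies in $S(\varphi(\gamma))$ if and only if $\delta_2(\gamma, \beta) = 1$, i.e. if and only if $\beta \in S(\gamma)$. The bijectivity of $\beta \mapsto \varphi(\beta)$ then upgrades this equivalence of membership conditions to the set equality $S(\varphi(\gamma)) = \{\varphi(\beta) \mid \beta \in S(\gamma)\} = \varphi(S(\gamma))$, as desired.

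There is no serious obstacle here, as the corollary is a formal consequence of \cref{lem: delta_2 never changed}. The only points requiring a moment's care are: (i) recording that $S(\varphi(\gamma))$ must be read off from the system $L_2$ rather than $L_1$, which is precisely where the hypothesis $L_2 = \varphi(L_1)$ enters; and (ii) observing that $\varphi$ being a homeomorphism forces the correspondence $\beta \leftrightarrow \varphi(\beta)$ to be injective, so that no two loops collapse and the two sets genuinely have matching elements rather than merely matching cardinalities. The well-definedness of $\delta_2$ under the canonical versus anti-canonical ambiguity is already built into its definition, so it needs no separate treatment.
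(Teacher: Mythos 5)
Your proposal is correct and takes essentially the same route the paper intends: the paper states this corollary as an immediate consequence of combining \cref{lem: delta_2 never changed} with \cref{defi: Sets of relative short loops}, and your argument simply makes that combination explicit by unwinding $S(\varphi(\gamma))$ inside $L_2 = \varphi(L_1)$ and substituting the invariance $\delta_2(\varphi(\gamma),\varphi(\beta)) = \delta_2(\gamma,\beta)$. Your two points of care (that $S(\varphi(\gamma))$ is read off from $L_2$, and that $\varphi$ restricts to a bijection $L_1 \to L_2$) are exactly the right ones, so there is no gap.
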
    

\begin{thm}[Non-uniqueness for $n \geq 6$]
\label{thm: NonUniquenessNGeqSix}
For $n \geq 6$, the maximal complete $1$-systems of loops on $N_{1,n}$ are not unique up to the action of the mapping class group.
\end{thm}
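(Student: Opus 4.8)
The plan is to produce two maximal complete $1$-systems on $N_{1,n}$ and to separate their mapping class group orbits by a numerical invariant. The invariant I would use is the multiset
\[
\mathcal{S}(L) := \{\!\{\, \#S(\gamma) \mid \gamma \in L \,\}\!\}
\]
of cardinalities of the sets of relative short loops. By \cref{cor: S varphi gamma is varphi S gamma}, if $\varphi(L_1)$ is isotopic to $L_2$ then $\#S(\varphi(\gamma)) = \#S(\gamma)$ for all $\gamma \in L_1$, so $\mathcal{S}(L_1) = \mathcal{S}(L_2)$; hence $\mathcal{S}$ is an invariant of the equivalence class. The two systems to compare are the standard system $L^s$ and the system $L' := L_{A'}$ associated to the maximal watermelon $A'$ with exactly $n-1$ short arcs from \cref{Thm: Existence of max WD with n-1 short arcs}. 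Both are maximal by \cref{MaxWDisMaxLoops}, so it suffices to show $\mathcal{S}(L^s) \neq \mathcal{S}(L')$.

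The heart of the argument is an exact computation of $\mathcal{S}(L^s)$, which I would carry out in the $\mathbb{Z}_2$-homology table of \cref{standard table example}. Since $\#S(\gamma) = \#\{\beta \in L^s \mid \delta_2(\gamma,\beta)=1\}$ and $\delta_2(\gamma,\beta)=1$ means the symmetric difference of the coefficient strings $(\epsilon_1,\dots,\epsilon_n)$ of $\gamma$ and $\beta$ has weight $1$ or $n-1$, I would encode each loop by its set of \emph{cuts}: the gaps $G_1,\dots,G_n$ between cyclically consecutive punctures at which its coefficient string switches value. Then $\gamma_0$ has empty cut set, $\gamma_{i,j}$ has the two-element cut set $\{G_i,G_j\}$, and each two-element set of gaps is realized exactly once. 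In this language $\delta_2(\gamma,\beta)=1$ is equivalent to $\mathrm{cut}(\gamma)\,\triangle\,\mathrm{cut}(\beta)$ being a pair of cyclically adjacent gaps $\{G_t,G_{t+1}\}$. Counting how many such adjacent pairs meet a given cut set --- and using that a four-element symmetric difference is never realizable because no loop has four cuts --- should give
\[
\#S(\gamma) =
\begin{cases}
n, & \gamma = \gamma_0,\\
4, & \gamma = \gamma_{i,j} \text{ non-short},\\
3, & \gamma = \gamma_{i,j} \text{ short},
\end{cases}
\]
so that every entry of $\mathcal{S}(L^s)$ lies in $\{3,4,n\}$.

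For $L'$ I only need a single entry of $\mathcal{S}(L')$. Marking $L'$ by $\gamma_0 = \partial D_n/{\sim}$ recovers the watermelon $A'$, whose short arcs are precisely the relative short loops of $\gamma_0$; since $A'$ has $n-1$ short arcs, $\#S(\gamma_0) = n-1$ and hence $n-1 \in \mathcal{S}(L')$ (this is \cref{$n-1$ short arcs table example}). For $n \geq 6$ one has $n-1 \geq 5$, so $n-1 \notin \{3,4,n\}$; therefore $\mathcal{S}(L') \neq \mathcal{S}(L^s)$, the two systems are inequivalent, and uniqueness fails for $n \geq 6$.

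I expect the main obstacle to be the middle paragraph: carrying out the cut-set bookkeeping for $L^s$ without error, in particular verifying that four-cut symmetric differences are genuinely unrealizable and that the adjacent-pair counts do not degenerate for the various positions of $\{G_i,G_j\}$. Conceptually the computation is transparent because $\#S(\gamma)$ equals the number of short arcs in the watermelon obtained by marking $L^s$ at $\gamma$; the upshot is that, once $n \geq 6$, only the mark $\gamma_0$ reproduces a standard ($n$-short-arc) watermelon, whereas the watermelon $A'$ can never arise from any mark of $L^s$.
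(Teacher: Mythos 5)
Your proposal is correct and follows essentially the same route as the paper: it compares the same two systems $L^s$ and $L' = L_{A'}$, uses the same invariant (the cardinalities $\#S(\gamma)$ of sets of relative short loops, preserved via \cref{lem: delta_2 never changed} and \cref{cor: S varphi gamma is varphi S gamma}), and hinges on the same computation that $\#S(\gamma) \in \{3,4,n\}$ for every $\gamma \in L^s$ while $\#S(\gamma'_0) = n-1 \geq 5$ in $L'$. Your repackaging of the argument as a multiset invariant, and the cut-set encoding of the $\mathbb{Z}_2$-homology table, are only cosmetic variations on the paper's case analysis (note that four-element symmetric differences of cut sets \emph{do} occur between loops with disjoint cut sets --- they are simply never adjacent pairs, which is all that is needed).
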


\begin{proof}
The idea is to show that the $\mathbb{Z}_2$-homology table of the standard maximal complete $1$-system of loops $L^s$ in \cref{standard table example} cannot be transformed into the table of the maximal complete $1$-system of loops $L'$ in \cref{$n-1$ short arcs table example} by any mapping class group action.

Assume that $\varphi$ is a representative of a mapping class such that $\varphi(L^s)$ is isotopic to $L'$ (see \cref{defi: Equivalent systems}). We will show that this is impossible.

Since $L'$ contains the loop $\gamma'_0$ satisfying $[\gamma'_0]_2 = g_0$ (note that $\gamma'_0$ is actually the same as $\gamma_0$, but we use a prime to distinguish it as an element of $L'$), there exists a loop $\gamma \in L^s$ such that $\varphi_{*}([\gamma]_2) = [\gamma'_0]_2 = g_0$.

Consider the loops in $L'$ corresponding to the $n-1$ short arcs, denoted by $\gamma'_1, \cdots, \gamma'_{n-1}$; that is, the loops whose coefficients have exactly one $1$ or exactly one $0$ among the entries in the rows labeled $g_1, \cdots, g_n$. In other words, we have $S(\gamma'_0) = \{\gamma'_1, \cdots, \gamma'_{n-1}\}$. Then there exist loops $\beta_1, \cdots, \beta_{n-1}$ in $L^s$ such that $\varphi(\beta_i) = \gamma'_i$ for each $i = 1, \cdots, n-1$. By \cref{cor: S varphi gamma is varphi S gamma}, it follows that $S(\gamma) = \varphi^{-1}(S(\gamma'_0)) = \{\beta_1, \cdots, \beta_{n-1} \}$, so in particular, we have $\# S(\gamma) = n-1$.

Now observe the structure of the $\mathbb{Z}_2$-homology table of $L^s$:

\begin{itemize}
\item The first entry (corresponding to $g_0$) of every column is $1$;
\item The last entry (corresponding to $g_n$) is $0$;
\item The middle entries (rows $g_1$ to $g_{n-1}$) form a block pattern of consecutive $0$s, followed by consecutive $1$s, followed again by consecutive $0$s.
\end{itemize}

Given this, for the columns corresponding to $\gamma$ and $\beta_i$ to differ in exactly $1$ or $n-1$ positions, we proceed by case analysis, depending on whether $\gamma = \gamma_0$ or $\gamma = \gamma_{i,j}$ for some $1 \leq i < j \leq n$. In all of the following cases, we will show that $\# S(\gamma) = n$ or $\leq 4$. This contradicts the previously established fact that any loop $\gamma$ in $L$ must satisfy $\# S(\gamma) = n - 1 \geq 5$.

If $\gamma$ is isotopic to $\gamma_0$, then in $L$, there are exactly $n$ loops, namely $\{\gamma_{i,i+1} \mid i = 1, \cdots, n-1\} \cup \{\gamma_{1,n}\}$, that satisfy $\delta_2(\gamma_0, \beta) = 1$ or $n-1$ for each $\beta$ in the set. That is, $S(\gamma) = S(\gamma_0) = \{\gamma_{i,i+1} \mid i = 1, \cdots, n-1\} \cup \{\gamma_{1,n}\}$. This implies that $\# S(\gamma) = n > n-1$.

If $\gamma$ is isotopic to $\gamma_{i,j}$ with $1 < i < j < n$, then:
\begin{itemize}
\item If $j > i + 1$, we have $S(\gamma) = \{\gamma_{i-1,j},\gamma_{i+1,j},\gamma_{i,j-1}, \gamma_{i,j+1}\}$.
\item If $j = i + 1$, we have $S(\gamma) = \{\gamma_0, \gamma_{i-1,j}, \gamma_{i,j+1}\}$.
\end{itemize}
In both cases, $\# S(\gamma) \leq 4 < n - 1$.

If $\gamma$ is isotopic to $\gamma_{i,j}$ with $1=i<j<n$, then:
\begin{itemize}
\item If $j > i + 1$, we have $S(\gamma) = \{\gamma_{2,j}, \gamma_{1,j-1}, \gamma_{1,j+1}, \gamma_{j,n}\}$.
\item If $j = i + 1$, we have $S(\gamma) = \{\gamma_0, \gamma_{1,3}, \gamma_{2,n}\}$.
\end{itemize}
Again, $\# S(\gamma) \leq 4$.

If $\gamma$ is isotopic to $\gamma_{i,j}$ with $1<i<j=n$, then:
\begin{itemize}
\item If $j > i + 1$, we have $S(\gamma) = \{\gamma_{i,n-1}, \gamma_{i-1,n}, \gamma_{i+1,n}, \gamma_{1,i} \}$.
\item If $j = i + 1$, we have $S(\gamma) = \{ \gamma_0, \gamma_{n-2,n}, \gamma_{1,n-1}\}$.
\end{itemize}
So $\# S(\gamma) \leq 4$.

If $\gamma$ is isotopic to $\gamma_{1,n}$, then $S(\gamma) = \{\gamma_0, \gamma_{1,n-1}, \gamma_{2,n}\}$, hence $\# S(\gamma) = 3 \leq 4$.

Therefore, no mapping class representative $\varphi$ satisfies that $\varphi(L^s)$ is isotopic to $L'$. It follows that $L'$ and $L^s$ are not equivalent maximal complete $1$-systems of loops. Hence, for $n \geq 6$, the maximal complete $1$-system of loops on $N_{1,n}$ is not unique up to the action of the mapping class group.

\end{proof}

\section*{Acknowledgement}
    At the end of this paper, We would like to express our gratitude to Yi Huang for his invaluable assistance and guidance. We also thank Weiyan Chen for supporting the first listed author's trip to the US, where these results were presented, and Yitwah Cheung for providing valuable feedback on the first listed author's thesis proposal, which included some results from this paper.


\newpage

\printbibliography[title={References}] 


\end{document}